\documentclass[11pt,a4paper]{article}

\usepackage[english]{babel}
\usepackage{   amssymb,amsfonts,amsthm,enumitem,stmaryrd,
		      tikz-cd,mathrsfs,etoolbox,bbm,
		      extarrows
			}
        \usepackage[letterspace = 125]{microtype}
\usepackage[left = 2.7cm,right=2.7cm,top = 3.5cm,bottom=3.5cm]{geometry}

\usepackage[colorlinks=false]{hyperref}
\usetikzlibrary{quotes,babel,angles,backgrounds,fit,calc}
\usepackage{multicol}

\theoremstyle{plain}
\newtheorem{thm}{Theorem}[section]
\newtheorem{prop}[thm]{Proposition}
\newtheorem{cor}[thm]{Corollary}
\newtheorem{lem}[thm]{Lemma}

\theoremstyle{definition}
\newtheorem{defi}[thm]{Definition}
\newtheorem{exas}[thm]{Examples}

\theoremstyle{remark}
\newtheorem{rem}[thm]{Remark}

\DeclareMathOperator{\CM}{\mathbf{CM}}

\DeclareMathOperator{\dHom}{dHom}

\DeclareMathOperator{\dom}{dom}
\DeclareMathOperator{\Eq}{Eq}

\DeclareMathOperator{\Equi}{Equi}
\DeclareMathOperator{\eval}{\mathtt{eval}}
\DeclareMathOperator{\fHom}{fHom}

\DeclareMathOperator{\Fun}{Fun}
\newcommand{\Grothendieck}[2]{\sum_{#1}#2}
\DeclareMathOperator{\Hom}{Hom}

\DeclareMathOperator{\id}{id}

\DeclareMathOperator{\Mon}{Mon}
\DeclareMathOperator{\none}{\phantom{\cdot}}
\DeclareMathOperator{\opp}{\!^{\mathrm{op}}}
\DeclareMathOperator{\pr}{\mathsf{pr}}

\DeclareMathOperator{\pull}{\mathtt{pull}}
\DeclareMathOperator{\pullback}{\ar[dr,"\lrcorner", description, phantom, at start]}

\DeclareMathOperator{\simto}{\rightarrowtriangle}

\DeclareMathOperator{\To}{\Rightarrow}

\renewcommand{\textminus}{\,\text{-}\,}

\newcommand{\bbone}{\text{\usefont{U}{bbold}{m}{n}1}}
\MakeRobust{\bbone}
\newcommand{\bbtwo}{\text{\usefont{U}{bbold}{m}{n}2}}
\MakeRobust{\bbtwo}
\newcommand{\bbthree}{\text{\usefont{U}{bbold}{m}{n}3}}
\MakeRobust{\bbthree}
\newcommand{\bbfour}{\text{\usefont{U}{bbold}{m}{n}4}}
\MakeRobust{\bbfour}
\newcommand{\bbfive}{\text{\usefont{U}{bbold}{m}{n}5}}
\MakeRobust{\bbfive}
\newcommand{\bbsix}{\text{\usefont{U}{bbold}{m}{n}6}}
\MakeRobust{\bbsix}
\newcommand{\bbseven}{\text{\usefont{U}{bbold}{m}{n}7}}
\MakeRobust{\bbseven}
\newcommand{\bbeight}{\text{\usefont{U}{bbold}{m}{n}8}}
\MakeRobust{\bbeight}
\newcommand{\bbnine}{\text{\usefont{U}{bbold}{m}{n}9}}
\MakeRobust{\bbnine}
\newcommand{\bbnull}{\text{\usefont{U}{bbold}{m}{n}0}}
\MakeRobust{\bbnull}

\DeclareMathOperator{\Cat}{\mathbf{Cat}}
\DeclareMathOperator{\CatBaseComp}{\mathsf{CatBaseComp}}

\DeclareMathOperator{\CompMod}{\mathsf{CompMod}}

\DeclareMathOperator{\Dialg}{\mathsf{DiAlg}}

\DeclareMathOperator{\Par}{\mathsf{Par}}
\DeclareMathOperator{\rCat}{\mathsf{rCat}}

\DeclareMathOperator{\Sets}{\mathsf{Sets}}
\DeclareMathOperator{\SetsB}{\mathbf{Sets}}

\newcommand{\C}[1]{\mathscr{#1}}
\newcommand{\B}[1]{\mathbf{#1}}

\newcommand{\CB}[1]{(\C{#1},#1)}

\tikzcdset{simto/.tip={Glyph[glyph math command=rightarrowtriangle]}}
\usepackage[backend = bibtex]{biblatex}
\newlength{\boxi}
\newlength{\boxilaenge}
\newlength{\boxii}
\newlength{\abstractwdth}
\newlength{\keywordwdth}
\newlength{\authorwdth}
\newlength{\Infowdth}
\newlength{\hanginglength}

\newcommand{\titlesetting}[5]{%
	\hrule%
	\kern25pt%
	\hbox to \textwidth{\vbox{#1}}%
	\kern25pt%
	\hrule%
	\hbox to \textwidth{%
		\vtop{%
			\kern 10pt%
            \hbox to .65\textwidth{%
				\kern10pt\vtop{\hsize\boxii \noindent\textls{Abstract}%
                \par\vskip-.5em\noindent                \rule{1.3\abstractwdth}{.4pt}%
                \par\noindent%
\small#2\normalsize\par\medskip\noindent\textls{Keywords}%
\par\vskip-.5em\noindent
                \rule{1.3\keywordwdth}{.4pt}%
                \par\noindent
\small#5}} \kern 10pt
		}%
        \vrule%
		\vtop{%
			\kern10pt%
            \hbox to \boxi{%
            \kern 10pt \vtop{\hsize\boxilaenge
            \noindent\textls{Authors}%
            \par\vskip-.5em\noindent\rule{1.3\authorwdth}{.4pt}%
            \par\smallskip\noindent%
            #3\normalsize}
            }
			\kern 10pt
			}
}
	\hrule%
    \hbox to \textwidth{%
		\vtop{%
			\kern 10pt\hbox to .96\textwidth{%
				\kern10pt\vtop{\hsize.96\textwidth \noindent\textls{Article Info}\par\vskip-.5em\noindent
                \rule{1.3\Infowdth}{.4pt}\par\noindent
\small#4}}\kern 10pt}}
    \hrule
}

\newcommand{\titlesetter}[6]{%
	\vbox{%
		\hrule
	\kern25pt%
	\hbox to \textwidth{\vbox{#1}}%
	\kern10pt
	\hbox to \textwidth{%
    \hfill
		\vtop{%
			\kern10pt%
			\hbox{%
                \vtop{%
            \hbox to \boxi{%
            \kern 10pt \vtop{\hsize\boxilaenge
            #3\normalsize}
}}
\vtop{%
            \hbox to \boxi{%
            \kern 10pt \vtop{\hsize\boxilaenge
            #4\normalsize}
            }
			}\hfil\phantom{.}
		}
        \kern 15pt
        }
        \hfill
}
 \hbox to \textwidth{%
        \kern10pt \vtop{%
        \hbox{#5}\normalsize
        \kern 15pt}
	       }
}
	\hrule%
\hbox to \textwidth{%
		\vtop{%
			\kern 10pt\hbox to .96\textwidth{%
				\kern10pt\vtop{\hsize.96\textwidth \noindent\textls{Abstract}\par\vskip-.5em\noindent
                \rule{1.3\abstractwdth}{.4pt}\par\noindent
\small#2}}\kern 10pt
\hbox to .96\textwidth{%
	\kern10pt\vtop{\hsize.96\textwidth %
\noindent\textls{Keywords}\par\vskip-.5em\noindent
                \rule{1.3\keywordwdth}{.4pt}\par\noindent
\small#6}}\kern 10pt
}}
\hrule
}

\begin{document}

\titlesetter{%
\centering\sc{\huge Categories with a base of computability}
}
{%
The notion of a base of computability $C$ in a category $\mathscr{C}$ was introduced in~\cite{petrakisStrictComputabilityModels2022} as a tool to generate computability models, in the sense of Longley and Normann, from categories. In this paper we introduce the category $\CatBaseComp$ of categories with a base of computability, and we show that \( \CatBaseComp \) has all pie limits. We prove that a Grothendieck fibration lifts a base of computability in the base category to a base of computability in the total category of the fibration, and conversely, a pullback-preserving Grothendieck fibration maps a base of computability in the total category to a base of computability in the base category of the fibration. Connecting $\CatBaseComp$ with the semantics of dependent type theory, we show that $\CatBaseComp$ is a type-category, or a (fam, $\Sigma)$-category with a terminal object. Moreover, we prove that $\CatBaseComp$ is a 
(2-fam, \( \Sigma \))-category, a 2-categorical generalisation of a (fam, $\Sigma)$-category. Finally, we describe the canonical 
(2-dep, $\Sigma$)-structure of $\CatBaseComp$, i.e., the canonical dependent arrows of $\CatBaseComp$ that are compatible with its (2-fam, \( \Sigma \))-structure.
}
{%
\noindent Luis \sc{Gambarte}\textsuperscript{1}\normalfont\normalsize
    \\
    \footnotesize\href{mailto:gambarte@math.lmu.de}{\ttfamily gambarte@math.lmu.de}
}{%
	\noindent\normalsize Iosif \sc{Petrakis}\textsuperscript{1}\normalfont
    \\
   \footnotesize\href{mailto:petrakis@math.lmu.de}{\ttfamily petrakis@math.lmu.de}
}{%
  \noindent\small\textsuperscript{1}\!Ludwig-Maximilians-Universität M\"unchen, Theresienstraße 39, D-80333 München}
{%
Computability models, category theory, dominions, type-category, dependent type theory
}
\normalsize

%\renewcommand{\contentsname}{\relax\vskip-1.3\baselineskip}
%\begin{multicols}{2}
%{\tableofcontents}
%\end{multicols}
%\vskip 1em
%\hrule

\section{Introduction}

In~\cite{Pe22}, the notion of a category with a \emph{base of computability} was introduced, in order to generate computability models, in the sense of Longley and Normann~\cite{LN15}, from categories (see Remark~\ref{ex: base}). 
If $T$ is a class, the objects of which are called \emph{type names}, a \emph{computability model} $\mathbf{C}$ over $T$ is a pair $(\mathbf{C}(t)_{t \in T}, (\mathbf{C}[s, t])_{s, t \in T})$, where $\mathbf{C}(t)$ is a set of \emph{data types}, for every $t \in T$, and $\mathbf{C}[s, t]$ is a class of partial functions from $\mathbf{C}(s)$ to $\mathbf{C}(t)$, such that the identity function $1_{\mathbf{C}(t)}$ is in $\mathbf{C}[t,t]$, and for every $f \in \mathbf{C}[r,s]$ and $g \in \mathbf{C}[s,t]$ we have $g \circ f \in \mathbf{C}[r,t]$.
Many notions of category theory have their counterparts in the theory of computability models. For example, to the category $\Sets$ of sets and functions corresponds the computability model $\SetsB$ over the class of sets, where $\mathbf{C}(X) := X$, for every set $X$, and $\mathbf{C}[X,Y]$ is the set of partial functions from $X$ to $Y$. In~\cite{GP24, GP25} the categorical Grothendieck construction motivated the definition and study of the so-called Grothendieck computability model.
\emph{Simulations} are the arrows between computability models.
In~\cite{GP25} it is shown that $\CompMod$, the category of computability models and simulations, is a type-category, in the sense of Pitts~\cite{Pi01}, a result that bridges the categorical interpretation of dependent type theory with the theory of computability models. In the proof of this result the Grothendieck computability model is the $Sigma$-object over a computability model $\mathbf{C}$ and a copresheaf-simulation $\B {\gamma} \colon \B C \simto \SetsB$ (see also section~\ref{sec: typecat}).

Here we extend~\cite{Pe22} introducing the category $\CatBaseComp$ with objects the categories with a base of computability, and with arrows the so-called \emph{computability transfers}. Despite the similarity of the notion of a base of computability with Rosolini's notion of dominion, introduced in~\cite{rosoliniContinuityEffectivenessTopoi1986}, the study of $\CatBaseComp$ is, to our knowledge, new. Actually, we do not know of a systematic study of the category of categories with a dominion, the arrows of which, though, are defined slightly differently from computability transfers (see section~\ref{sec: CatBaseComp}). 

In addition to showing that \( \CatBaseComp \) has all pie limits, we
describe how a Grothendieck fibration acts on a base of computability in the total category, or in the base category of the fibration.
More importantly, we show that $\CatBaseComp$ itself is directly connected to the categorical semantics of dependent type theory. For that, we first prove that $\CatBaseComp$ is a type-category, or, equivalently, a (fam, $\Sigma)$-category with a terminal object. Moreover, we show that $\CatBaseComp$ is a 
(2-fam, \( \Sigma \))-category, a 2-categorical generalisation of a (fam, $\Sigma)$-category. Finally, we describe the canonical 
(2-dep, $\Sigma$)-structure of $\CatBaseComp$, i.e., the canonical dependent arrows of $\CatBaseComp$ that are compatible with its (2-fam, \( \Sigma \))-structure.

We structure this paper as follows:

\begin{itemize}

\item In section~\ref{sec: CatBaseComp} we introduce the category $\CatBaseComp$ with objects the categories with a base of computability and arrows the computability transfers. We also briefly discuss the relation of a category with a base of computability to Rosolini's categories with a dominion, to Rosolini's $p$-categories, to dominical categories of Di Paola and Heller, and to restriction categories of Cockett and Lack.

\item In section~\ref{sec: limits} we show that the category \( \CatBaseComp \) has all set-indexed products, as well as all equifiers and inserters, and hence, it has all pie limits, as these are described in~\cite{powerCharacterizationPieLimits1991}.

\item In section~\ref{sec: fibr} we show that a Grothendieck fibration 
lifts a base of computability in the base category to a base of computability in the total category of the fibration (Proposition~\ref{prp: FibLiftBase}). Furthermore, using Grothendieck fibrations we regain some pullbacks in $\CatBaseComp$ (Theorem~\ref{thm: PullbackFibLift}), which do not exist, in general, as it is explained in section~\ref{sec: limits}.

\item In section~\ref{sec: transport} we define various new bases of computability from given ones. Namely, we show that a pullback-preserving fibration $F \colon \mathscr{E \to B}$ maps a base of computability $E$ in $\mathscr{E}$ to a base of computability $F(E)$ in $\mathscr{B}$ (Proposition~\ref{prp: basetobase}). Moreover, we describe the base of computability in the comma-category $F / G$ (Proposition~\ref{prp: commabase}), and the exponential in a natural subcategory of $\CatBaseComp$ (Proposition~\ref{prp: exp}).

\item In section~\ref{sec: typecat}, we present the notion of a \emph{category with a family-arrow structure} and \emph{Sigma-objects}, introduced in~\cite{petrakisCategoriesDependentArrows2023} and directly linked to the notion of \emph{type-category}, introduced by Pitts in~\cite{Pi01}. Sigma-objects generalise the Grothendieck construction in the abstract framework of categories with a family-arrow structure. The main result of this section is that the category $\CatBaseComp$ is a type-category, i.e., a (fam, $\Sigma$)-category with a terminal object (Theorem~\ref{thm: typecat}). In this way, the category $\CatBaseComp$ can be seen as a model of dependent type theory. 

\item In section~\ref{sec: 2famS} we show that \( \CatBaseComp \) is a (2-fam, \( \Sigma \))-category (Proposition~\ref{prp: 2famSigma}), a 2-categorical generalisation of a (fam, $\Sigma$)-category, studied by Ehrhardt in~\cite{ehrhardt2depCategories2024}.

\item A dependent arrow is an abstract categorical formulation of the type-theoretic notion of a dependent function. The axioms of a category endowed with a dependent-arrow structure capture the properties of composition of a dependent function with a function, exactly as the axioms of a category capture the properties of compositions of functions.
In \cite{petrakisCategoriesDependentArrows2023} it is shown that a (fam, $\Sigma$)-category can be equipped with a canonical dependent arrow-structure, turning it into a (dep, $\Sigma$)-category.
In section~\ref{sec: 2depS} we present the notion of a (2-dep, $\Sigma$)-category, and describe the dependent arrows obtained by the canonical construction, applied to the (2-fam, $\Sigma$)-structure on \( \CatBaseComp \), presented in section~\ref{sec: 2famS}.

%\item In section~\ref{sec: canonical} we 

\end{itemize}

For all notions and results from category theory that we use here without explanation or proof, we refer to~\cite{Ri16}. For all notions and results from the theory of computability models that we use here without explanation or proof, we refer to~\cite{LN15, Pe22, GP24, GP25}.
We use bold letters to denote a computability model.
This paper is a continuation of~\cite{Pe22}, published in this journal.

\section{The category \texorpdfstring{$\CatBaseComp$}{CatBaseComp}}\label{sec: CatBaseComp}

A base of computability in a category $\C{C}$
%, introduced in~\cite{Pe22}, 
is a family of monos, indexed by the objects of $\C{C}$, which induces in a simple and natural way canonical computability models. It is motivated exactly by this direct generation of computability models from the category $\C{C}$ equipped with a base. \textit{Throughout this paper $\C{C, D}$ are categories}.

\begin{defi}[Base of computability]\label{def: base}
%Let $\mathscr{C}$ be a category. 
A \emph{base of computability} $B$ in $\mathscr{C}$ is a family
$(B(a))_{a \in \mathscr{C}_0} $ of subclasses $B(a) \subseteq \Mon(\textminus, a)$, such that the following conditions hold:\\[1mm]
(\texttt{Base}\textsubscript{1}) $1_a \in B(a)$, for every $a \in \mathscr{C}$.\\[1mm]
(\texttt{Base}\textsubscript{2}) For every $a,b \in \mathscr{C}$ and $i \colon s \to a, j \colon t \to b$, such that
	$i \in B(a), j \in B(b)$, and $f \colon s \to b$,
	a pullback $j^*(s)$ exists, and the composite arrow $i \circ f^\ast j $ is in $B(a)$
	\[ \begin{tikzcd}
	& j^*(s)\ar[r,"j^\ast f"] \ar[d,"f^\ast j",hook] \ar[dl,"i \circ  f^\ast j"',hook] \pullback & t \ar[d,"j",hook]
	\\
	a & s \ar[l,"i",hook] \ar[r,"f"'] & b.
	\end{tikzcd} \]
If the base of computability fulfills the following stronger condition, we call it \emph{replete}.\\[1mm]
(\texttt{Replete}) For every $a,b \in \mathscr{C}$ and $i \colon s \to a, j \colon t \to b$, such that
	$i \in B(a), j \in B(b)$, and $f \colon s \to b$,
	a pullback $j^*(s)$ exists, and for every such pullback the composite arrow $i \circ f^\ast j$ is in $B(a)$
	\[ \begin{tikzcd}
	& j^*(s)\ar[r,"j^\ast f"] \ar[d,"f^\ast j",hook] \ar[dl,"i \circ  f^\ast j"',hook] \pullback & t \ar[d,"j",hook]
	\\
	a & s \ar[l,"i",hook] \ar[r,"f"'] & b.
	\end{tikzcd} \]
%	commutes and $i \circ f^\ast j \in B(a)$.
\end{defi}

\begin{exas}
    \label{ex:bases}
    % Identity bases, isomorphisms, all monos Paper (22) 
    %% Done
    If $\C C$ is a category, one can form the \emph{identity base} $I$ which contains only the identities, that is $I(c) = \{1_c\}$ for every $c \in \C C$.
    The pullbacks necessary for condition (\texttt{Base}\textsubscript{2}) are of the following form:
    \[ \begin{tikzcd}
        d \ar[d,"1_d"'] \ar[r,"f"] &c \ar[d,"1_c"] 
        \\
        d \ar[r,"f"'] & c.
    \end{tikzcd}\]
    The \emph{isomorphism base} $\mathrm{Iso}$ contains all isomorphisms, that is $\mathrm{Iso}(c) = \{i \colon d \xlongrightarrow{\cong} c ~\vert~ i \text{ iso}\}$. 
    The pullbacks necessary for condition (\texttt{Base}\textsubscript{2}) are of the following form:
    \[ \begin{tikzcd}
        d \ar[d,"1_d"'] \ar[r," i^{-1} \circ f"] &c' \ar[d,"i"] 
        \\
        d \ar[r,"f"'] & c.
    \end{tikzcd}\]
    This base is replete, as if another pullbacks exists, it is connected via a unique mediating iso to a pullback of the above form, but all such isomorphisms are in the isomorphism base.

    If $\C C$ has pullbacks, the \emph{partial base} $\mathrm{Mon}$ contains all monos, that is $\mathrm{Mon}(c) = \{m \colon d \hookrightarrow c~\vert~ m \text{ mono}\}$. 
    This base is also replete.
\end{exas}

\begin{rem}
    One can show that a replete base of computability is closed under isos---in the sense that $B(a)$ contains all isomorphisms with codomain $a$---and compositions, and conversely any base of computability that is closed under composition and isos is replete.% Furhter elaboration
\end{rem}

\begin{rem}\label{ex: base}
Next, we explain why we use the term \emph{base of computability} in Definition~\ref{def: base}.
Following~\cite{Pe22}, if $\mathscr{C}$ has pullbacks and $S \colon \mathscr{C} \to \Sets$ is a pullback-preserving presheaf, then the family $B$ with $B(a) := \Mon(\textminus, a)$, for every $a \in \mathscr{C}$, is a base of computability in $\C{C}$. The \emph{partial computability model} $\CM^{B}(\mathscr{C};S)$ induced by $B$ is the pair $\big(S(a)_{a \in \C{C}}, (S_B[a,b])_{a, b \in \C{C}}\big)$, where 
$$S_B[a,b] := \{(S(i), S(f)) \mid i \in B(a) \ \& \ f \in \Hom(\dom(i), b)\}.$$
Recall that if $(i, f)$ is a partial arrow $a \rightharpoonup b$ in $\C{C}$, then $(S(i), S(f))$ is a partial function $S(a) \rightharpoonup S(b)$, where 
$$\dom(S(i), S(f)) := S(\dom(i)),$$
$$[(S(i), S(f)](S(i)(x)) := S(f)(x).$$
If $B(a) := \{1_a\}$, for every $a \in \C{C}$, then the above computability model is reduced to the \emph{total computability model},
%$\CM^{\total}(\mathscr{C};S)$, 
where the class of its type names is the class of the objects of $\C{C}$ and its data types are the sets $S(a)$, for every $a \in \C{C}$.
\end{rem}
%\begin{exas}\hfill
%\begin{enumerate}[leftmargin = 2em]
%	\item Let $\mathscr{C}$ be an arbitrary category with a copresheaf $S \colon \mathscr{C} \to
%	\Sets$. We consider the base $\total$ on $\mathscr{C}$ (also called \emph{identity base}), defined
%through $\total(a) = \{\mathbf{1}_a\}$ for all $a \in \mathscr{C}$. 
%	The computability model	 is called the \emph{total computability
%	model} associated to $\mathscr{C}$ and $S$.
%	\item Let $\mathscr{C}$ be a category with all pullbacks and $S \colon \mathscr{C} \to
%	\Sets$ be a pullback-preserving presheaf. Let $\prt$ be the computability model defined by
%	$\prt(a) = \Mon(\textminus, a)$ for all $a \in \mathscr{C}$. The model $\CM^{\prt}(\mathscr{C};S)$ is called the \emph{partial computability
%	model} associated to $\mathscr{C}$ and $S$.\end{enumerate}
%\end{exas}

%%%%%%%%%%%%%%%%%%%%

\begin{rem}
    \label{rem: dominion}
The notion of \emph{dominion}, introduced by Rosolini in \cite{rosoliniContinuityEffectivenessTopoi1986}, is
 similar to that of a base of computability. 
If $\C{C}$ has binary products and $\mathcal{M}$ is a family of monos in $\C{C}$, closed under identity and composition, such that any pullback of a mono in $\mathcal{M}$
exists in $\C{C}$ and a representative of it belongs to $\mathcal{M}$, then $\mathcal{M}$ is a dominion.
The difference between a base of a computability and a dominion is that the monos in a dominion form a wide subcategory of the category $\C C$, whereas the monos in a base of computability do not: they must not be closed under composition, as this would require that for any $m \in B(a)$ for arbitrary $a$ we have that the pullback 
\[
\begin{tikzcd}
    c \ar[d,"m"'] \ar[r,"1_c"] \pullback & c \ar[d,"m"]
    \\
    a \ar[r,"1_a"] & a
\end{tikzcd}
\]
is the one used in (\texttt{Base}\textsubscript{2}), but this is not guaranteed.

However, replete bases of computability are closed under composition, as remarked earlier, as condition (\texttt{Replete}) entails closure under all possible choices of pullbacks, in particular for the ones of the above form.
Hence, replete bases of computability are dominions, albeit not necessarily in categories with binary products.

 If $\C{D}$ has products and a dominion $\mathcal{N}$, a functor $F \colon
\C{C\to D}$ that takes monos in $\mathcal{M}$ to monos in $\mathcal{N}$ is said to \emph{preserve
dominions} (see~\cite{rosoliniContinuityEffectivenessTopoi1986}). This notion differs from our definition of a computability transfer, as it does not require to preserve pullbacks.
\end{rem}

%%%%%%%%%%%%%%%%%%%%

% Insert exas here

%%%%%%%%%%%%%%%%%%%%%%%%%%%%%%%%%%%%%%%%

% Mention cobases of computability?

%%%%%%%%%%%%%%%%%%%%%%%%%%%%%%%%%%%%%%%%%
% Comparison with the notion of dominion

%%%%%%%%%%%%%%%%%%%%%%%%%%%%%%%%%%%%%%%%%
% Comparison with the notion representation of partial morphisms in acc.pdf
 	\emph{From now on, we usually use the same letter for the base of computability in a category, but in normal serif font, and \( (\C C, C)\), \((\C D, D)\), and \((\C E, E)\) are categories with a base of computability.}

\begin{defi}[Computability transfers]
A \emph{computability
transfer} $F \colon (\mathscr{C},C) \to (\mathscr{D},D)$ is a  functor $F \colon \mathscr{C \to D}$, such that
for every $a \in \mathscr{C}$ we have $F\big(C(a)\big) = \big\{ F(i) ~\vert~ i \in C(a)\big\} \subseteq D\big(F(a)\big)$, and $F$ preserves all pullbacks mentioned in (\texttt{Base}\textsubscript{2}).
\end{defi}

% Add examples 
%% Done
\begin{exas}
    \label{exas: CompTrafos}
    For the identity bases, every functor $F \colon \C C \to \C D$ is a computability transfer $F \colon (\C C,I) \to (\C D,I)$.
    Similarly, every such functor $F \colon (\C C,\mathrm{Iso}) \to (\C D, \mathrm{Iso})$ is a computability transfer,
    and if $\C C, \C D$ have pullbacks and $F$ preserves them, then $F$ is a computability transfer $F \colon (\C C, \mathrm{Mon}) \to (\C D, \mathrm{Mon})$.
\end{exas}

\begin{defi}[The 2-category $\CatBaseComp$]
Let $\CatBaseComp$ be the 2-category with
\emph{objects} the pairs $(\mathscr{C},C)$, where
	$\mathscr{C}$ is a category and $C$ is a base of computability in $\mathscr{C}$, \emph{1-cells} $F \colon (\mathscr{C},C) \to (\mathscr{D},D)$ computability transfers, and
\emph{2-cells} natural transformations \( F \To G \) between the underlying functors.
\end{defi}

\begin{rem}\label{rem: comparison}
\normalfont 
As explained in Remark~\ref{ex: base}, a base of computability in $\C{C}$ allows the definition of a computability model and its partial computable functions, in a canonical way. It is not an accident that the notion of a base of computability is close to Rosolini's dominions and $p$-categories, introduced in~\cite{rosoliniContinuityEffectivenessTopoi1986}, to dominical categories of Di Paola and Heller~\cite{dipaolaDominicalCategoriesRecursion1987}, and to restriction categories, introduced by Cockett and Lack in  \cite{cockettRestrictionCategoriesCategories2002}. Clearly, a category \( \C C \) with base of computability \( C \) induces a category \( \Par(\C C,C) \) of partial arrows.
It is straightforward to show that if \( \C C \) has products and an initial object, then \( \Par(\C C,C) \) is a dominical category.
In~\cite[Proposition 2.2.1]{rosoliniContinuityEffectivenessTopoi1986} it is shown that every \( p \)-category is a dominical category, while the inverse does not hold, in general. Moreover, one can turn any \( p \)-category into a restriction category (for all necessary details related to these notions, we also refer to~\cite{Ga26}).
%y defining 
%\[
	% \overline{f} := p_{c,d} \circ (1_c \times f) \circ \Delta_c 
%\]
%for \( f \colon c \to d \).
Hence, studying categories with a base of computability gives the impression that we are working at the lowest possible level of abstraction, and it would be wiser to consider restriction categories instead.
%of categories with a base of computability.
However, within our language, Cockett and Lack showed in~\cite{cockettRestrictionCategoriesCategories2002} that there is an adjunction 
\[ \begin{tikzcd}
	\CatBaseComp \ar[r,shift right = 1.5ex,""{name = U}] & \rCat \ar[l,shift right = 1.5ex,""{name = V}]
	\ar[from = U, to = V,"\vdash"{sloped, marking}, phantom]
\end{tikzcd} \]
where \( \rCat \) is the category of restriction categories and restriction functors, i.e., functors preserving the restriction structure.
Furthermore, the above functor  \( \CatBaseComp \to \rCat \) is fully faithful.
%Additionally, Longley provedI
Moreover, using our language, Cockett and Lack proved in~\cite{cockettRestrictionCategoriesCategories2002}, that if one works with restriction categories where all restriction idempotents
%\( \overline{f} \) 
split, then one has a 2-equivalence between \( \CatBaseComp \) and the category \( \rCat_s \) of these split restriction categories.
\end{rem}

\section{\texorpdfstring{$\CatBaseComp$}{CatBaseComp} has all pie limits}\label{sec: limits}
In this section we show that the category \( \CatBaseComp \) has all set-indexed products, as well as all equifiers and inserters, and hence, it has all pie limits, as these are described in~\cite{powerCharacterizationPieLimits1991}. 
Pie limits are defined in~\cite{powerCharacterizationPieLimits1991} as a special kind of weighted limit for 2-categories.

\begin{defi}\label{def: wlimits}
%[Weighted limits]
	Let \( \C C, \C W \) be 2-categories and \( F \colon \C W \to \Cat, G \colon \C W \to \C C \) be 2-functors.
	An \emph{\( F \)-weighted limit} of \( G \) is a representing object \( \lim^FG \) of 
	\[
		[\C W, \Cat]\Big(F, \C C\big(\textminus,G(\textminus)\big)\Big),
	\]
	that is, for every \( C \in \C C \) there is an isomorphism of categories, natural in $\C{C}$
	\[
		\C C[C,\lim\nolimits^F G] \cong [\C W,\Cat] \Big(F,\C C\big(C,G(\textminus)\big)\Big).
	\]
	%natural in \( C \).
\end{defi}

A textbook-account of weighted limits is found in~\cite[Chap.~6]{borceuxHandbookCategoricalAlgebra1994}, where 
%this book 
weighted limits are introduced for arbitrary categories enriched over a symmetric monoidal closed category \( \C V\). 
Here, we only require \( \C V = \Cat \).

\begin{exas}\label{ex: wlimits}
%\hfill
%	\begin{enumerate} 
	%\item
    \emph{Products} are weighted limits where \( \C W \) is a discrete 2-category on a set \(I\). 
			The weight \( F \) maps all objects \( i \) to the terminal category \( \bbone \).\\
		%\item
        \emph{Inserters} are weighted limits where \( \C W\) is the 2-category given by
		\[ \begin{tikzcd}
			0 \ar[r,"f_0"{name = U}, shift left = .5ex] \ar[r,"f_1"{name = V, swap}, shift right = .5ex] & 1.
		\end{tikzcd} \]
		The weight \( F \) takes \( 0 \) to the terminal category \( \bbone \) and \( 1 \) to the 2-category \( \Delta(1) \) with two objects and only the non-trivial arrow \( 0\to1 \), i.e., \(\Delta(1)\) is the 1-simplex. All 2-cells in \( \Delta(1) \) are trivial.
		The arrow \( f_0 \) is mapped to the functor taking \( 0  \) to \( 0 \) and the identity of \( 0 \) to itself, and the arrow \( f_1 \) is mapped to the functor taking \( 0 \) to \( 1 \) and the identity to the identity.\\
%		\item
        \emph{Equifiers} are weighted limits where \( \C W \) is the 2-category given by
			\[ \begin{tikzcd}
				0 \ar[r,"f_0"{name = U}, shift left = 1.5ex] \ar[r,"f_1"{name = V, swap}, shift right = 1.5ex] &[1em] 1.
				\ar[from = U, to = V, "\gamma_1", shift left = .75ex, Rightarrow,shorten = 1pt] \ar[from = U, to = V, "\gamma_0"', shift right = .75ex, Rightarrow, shorten = 1pt]
			\end{tikzcd} \]
			The weight \( F \) takes \( 0 \) to the terminal category \( \bbone\) and \( 1 \) to the 2-category \( \Delta(1) \), defined above.
	%\end{enumerate}
\end{exas}

\begin{rem}\label{rem: Rob}
In~\cite{powerCharacterizationPieLimits1991} Power and Robinson give an explicit description of these weighted limits in elementary terms.
%in \cite{powerCharacterizationPieLimits1991}.
%\begin{enumerate}
%\renewcommand{\labelenumi}{\theenumi.}
%\item 
Products are products in the standard categorical sense
%with their universal property 
together with the following 2-dimensional universal property: given two 1-cells (morphisms) \( f,g \colon A \to \prod_{i \in I} A_i \), the 2-cells \( \eta \colon f \To g \) are in one-to-one-correspondence with families of 2-cells \( \eta_i \colon \pr_i \circ f \To \pr_i \circ g \).
%\item 
An inserter of a diagram
	\[ \begin{tikzcd}
		A \ar[r,"f", shift left = .5ex] \ar[r,"g"', shift right = .5ex] & B
	\end{tikzcd} \]
	in a 2-category \( \C C \) consists of an object \( I \) of \( \C C \) together with a morphism \( i \colon I \to A \) and a 2-cell
 \( \alpha \colon f \circ i \To g \circ i \), which is universal with this property. Namely, \\[1mm]
 %the following properties are satisfied:
 %\begin{itemize} 
 %\item
 \emph{1-dimensional part:} For every \( J \) with \( j \colon J \to A, \beta \colon f \circ j \To g \circ j \) there is a unique \( h \colon J \to I \) making the obvious triangle commutative.\\
 \emph{2-dimensional part:} For every \( J,J' \) with \( j,\beta,j',\beta' \) as above and a 2-cell \( \nu \colon j \to j' \), such that \( \beta' \circ (f \ast \nu) = (g \ast\nu) \circ \beta  \), that is the following diagram commutes
\[ \begin{tikzcd}
	f \circ j \ar[r,"\beta",Rightarrow] \ar[d,"f \ast \nu"',Rightarrow] & g \circ j \ar[d,"g \ast \nu",Rightarrow]
	\\
	f \circ j' \ar[r,"\beta'"',Rightarrow] & g \circ j',
\end{tikzcd} \]
%commutes,
there is a 2-cell \( \mu \colon h \to h' \), such that \( i \ast \mu = \nu \).\\[1mm]
 %\end{itemize}
%\item 
An equifier of a diagram
	\[ \begin{tikzcd}
		A \ar[r,"f"{name = U}, shift left = 1.7ex] \ar[r,"g"{swap, name = V}, shift right = 1.7ex] &[1em] B
		\ar[from = U, to = V, "\gamma_0"', shift right = .75ex,Rightarrow,shorten=1pt] \ar[from = U, to = V, "\gamma_1", shift left = .75ex,Rightarrow,shorten=1pt]
	\end{tikzcd} \]
	in a 2-category \( \C C \) consists of an object \( I \) of \( \C C\)  together with an arrow \( i \colon I \to A \), such that \( \gamma_1 \ast i = \gamma_0 \ast i \), which is universal with this property. Namely,\\[1mm]
	%\begin{itemize}
		\emph{1-dimensional part:} For every \( J \) with \( j \colon J \to A \), such that \( \gamma_0 \ast j = \gamma_1 \ast j \), there is \( h \colon J \to I  \) with \( j = i \circ h \).\\
		\emph{2-dimensional part:} For every \( J,J' \) with \( j,j' \) as above and \( \nu \colon j \To j' \), there is \( \mu \colon h \to h' \), such that \( i \ast \mu = \nu \).
%	\end{itemize}
\end{rem}

\begin{exas} 
The category of categories has inserters and equifiers. 
%\begin{enumerate}
%\renewcommand{\labelenumi}{\theenumi.}
%\item 
Given functors \( F,G \colon \C C \to \C D \), the inserter of \( F \) and \( G \) is given by its \emph{category of dialgebras}---introduced by Lambek in \cite{lambekSubequalizers1970} as the \emph{subequalizing category} of \( F \) and \( G \)---\( \Dialg(F,G) \).
%	\begin{itemize}
		%\item 
        The objects of this category are pairs \( (x,\gamma) \) of arrows \( \gamma \colon F(x) \to G(x) \).
		%\item
        The arrows \( (x,\gamma) \to (y,\delta) \) are 
      %  given by maps
        arrows \( f \colon x \to y \), such that the following rectangle commutes
			\[ \begin{tikzcd}
				F(x) \ar[r,"\gamma"] \ar[d,"F(f)"'] & G(x) \ar[d,"G(f)"] 
				\\
				F(y) \ar[r,"\delta"'] & G(y).
			\end{tikzcd} \]
	%		commutes.
	%\end{itemize}
	The functor \( i \colon \Dialg(F,G) \to \C C \) maps \( (x,\gamma) \) to \( x \) and \( f \) to \( f \).
%\item 
Given functors \( F,G \colon \C C \to \C D \) and natural transformations \( \alpha,\beta \colon F \To G \), the equifier \( \Eq(\alpha,\beta) \) of \( \alpha \) and \( \beta \) is 
%given as
the category 
	%\begin{itemize}
		%\item 
       with objects the objects \( c  \) of \( \C C \), such that \( \alpha_c = \beta_c \), and 
	 arrows the arrows \( f \colon c \to c' \), such that \( c,c' \) are as above.
%	\end{itemize}
	The functor \( i \colon \Eq(\alpha,\beta) \) maps \( c \) to \( c \) and \( f  \) to \( f \).
%\end{enumerate}
\end{exas}

\begin{lem}\label{lem: prod}
	If \( (\C C_i,C_i) \) is a family of categories with bases of computability indexed by some set \( I \),
	then the product \( \prod_{i \in I} (\C C_i,C_i) \) is given by 
	\[
    \prod_{i \in I} (\C C_i,C_i) = \Big(\prod_{i \in I} \C C_i, \prod_{i \in I} C_i\Big), \text{ where }
		\Big(\prod_{i \in I} C_i\Big)(c_i)_{i \in I} = \prod_{i \in I} C_i(c_i). 
	\]
\end{lem}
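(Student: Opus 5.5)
We first check that \( \prod_{i \in I} C_i \) is a base of computability in \( \prod_{i \in I} \C C_i \), then that the projections are computability transfers, and finally verify the 1- and 2-dimensional universal properties from Remark~\ref{rem: Rob}. A family \( (m_i)_{i} \) with each \( m_i \) a mono is a mono in the product category, since arrows and composition are computed componentwise. So \( \big(\prod_i C_i\big)\big((c_i)_i\big) \) consists of monos, and (\texttt{Base}\textsubscript{1}) holds because \( 1_{(c_i)_i} = (1_{c_i})_i \).

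For (\texttt{Base}\textsubscript{2}), take \( (i_k)_k \in \prod_k C_k(a_k) \), \( (j_k)_k \in \prod_k C_k(b_k) \) and \( (f_k)_k \colon (s_k)_k \to (b_k)_k \). For each \( k \), choose the pullback \( j_k^*(s_k) \) supplied by (\texttt{Base}\textsubscript{2}) in \( \C C_k \), so that \( i_k \circ f_k^*j_k \in C_k(a_k) \). This choice uses the axiom of choice over the set \( I \). A componentwise pullback is a pullback in \( \prod_k \C C_k \), because cones and mediating arrows are also componentwise, with uniqueness componentwise. The composite \( (i_k \circ f_k^* j_k)_k \) therefore lies in the product base.

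Each projection \( \pr_k \) sends the product base into \( C_k \) by definition. It also preserves the chosen pullbacks: these were built componentwise, so their \( k \)-th component is the pullback chosen in \( \C C_k \). Here is a subtlety we must handle. (\texttt{Base}\textsubscript{2}) only asserts that a good pullback exists, and a computability transfer must preserve "the pullbacks mentioned in (\texttt{Base}\textsubscript{2})". We read this as the pullbacks witnessing the base condition. In a product category every pullback is componentwise anyway: projecting a pullback cone gives a pullback in each factor, since cones into a single factor can be padded with the other components of the given cone. Hence the projections preserve all pullbacks, whichever reading is used.

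For the 1-dimensional universal property, let \( F_k \colon (\C D, D) \to (\C C_k, C_k) \) be a family of computability transfers. The unique functor \( F = \langle F_k \rangle_k \) into \( \prod_k \C C_k \) sends \( m \in D(d) \) to \( (F_k(m))_k \in \prod_k C_k(F_k(d)) \). It sends a pullback square from (\texttt{Base}\textsubscript{2}) in \( \C D \) to a family of pullback squares, since each \( F_k \) preserves it. That family is a pullback in the product category, so \( F \) is a computability transfer. Uniqueness is inherited from \( \Cat \). For the 2-dimensional property, 2-cells in \( \CatBaseComp \) are just natural transformations. A natural transformation into a product category is exactly a family of natural transformations into the factors, so the required bijection \( \eta \leftrightarrow (\eta_k)_k \) is the one from \( \Cat \).

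The main obstacle is the pullback bookkeeping: making sure the componentwise pullback is an admissible witness for (\texttt{Base}\textsubscript{2}), and that the preservation requirement for transfers is met by both \( \pr_k \) and \( \langle F_k \rangle_k \). We resolve this with the observation that pullbacks in a product category are exactly the componentwise pullbacks.
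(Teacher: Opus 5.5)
Your proposal is correct and takes essentially the same approach as the paper: identities and the (\texttt{Base}\textsubscript{2}) pullbacks are handled componentwise, and the 1- and 2-dimensional universal properties are inherited from \( \Cat \). You also make explicit several steps the paper calls ``immediate''. These are the choice of componentwise witnesses over \( I \), and the padding argument showing that every pullback in \( \prod_{i \in I} \C C_i \) is componentwise, which is what makes the projections and the induced functor computability transfers.
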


\begin{proof}
	Clearly, the product-base contains all identities, as \( 1_{c_i} \in C_i(c_i) \) for all \(  c_i \). 
	For the second condition observe that pullbacks in the product category are simply the product of all the pullbacks, which exists in every category $\C C_i$ by assumption.
	That this is a product in the 2-limit sense, is immediate to show.
\end{proof}

\begin{lem}\label{lem: inserter}
	If 
    %\( \CB C, \CB D \) are categories with bases of computability and
    \( F,G \colon \CB C \to \CB D \) are computability transfers, 
    then the inserter of \( F,G \) is 
    %given by
    \( \big( \Dialg(F,G),\Dialg(F,G;C)\big)\), where \( \Dialg(F,G)\) is the category of dialgebras on \( F,G \) and 
    %\( \Dialg(F,G;C) \) is defined via
	\[
		\Dialg(F,G;C)\big(c,\alpha) \big) := \bigcup_{\beta \in \Dialg(F,G)}\left\{ 
f \in \Dialg(F,G)(\beta,\alpha) \middle\vert f \in C(c)
			\right\}.
	\]
	The computability transfer \( X \colon \big( \Dialg(F,G), \Dialg(F,G;C)\big) \to \CB C \) takes \((c,\alpha)\) to \( c \) and \( f \) to \( f \). 
	The natural transformation \( F \To G \) is the same as for the inserter in the standard categorical setting.
\end{lem}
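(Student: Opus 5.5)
The plan is to verify three things directly from the definitions. First, \( \Dialg(F,G;C) \) is a base of computability in \( \Dialg(F,G) \). Second, the forgetful functor \( X \) is a computability transfer. Third, the usual inserter data in \( \Cat \) satisfy the 1- and 2-dimensional universal properties of Remark~\ref{rem: Rob} inside \( \CatBaseComp \). Throughout I would use one observation: \( X \) is faithful, so any arrow of \( \Dialg(F,G) \) whose underlying arrow is mono in \( \C C \) is mono in \( \Dialg(F,G) \). Hence the classes \( \Dialg(F,G;C)(c,\alpha) \) really consist of monos.

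\emph{Condition (\texttt{Base}\textsubscript{1}).} This is immediate, because \( 1_c \in C(c) \) and \( 1_c \) is a dialgebra map \( (c,\alpha)\to(c,\alpha) \).

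\emph{Condition (\texttt{Base}\textsubscript{2}).} Let
\[ i\colon (s,\beta)\to(c,\alpha),\qquad j\colon (t,\delta)\to(d,\varepsilon),\qquad f\colon (s,\beta)\to(d,\varepsilon) \]
be given, with \( i \in C(c) \) and \( j\in C(d) \). By (\texttt{Base}\textsubscript{2}) in \( \C C \) there is a pullback \( p\colon s'\to s \), \( q\colon s'\to t \) of \( j \) along \( f \), and \( i\circ p\in C(c) \). Since \( F \) and \( G \) are computability transfers, they preserve exactly this pullback. In particular \( G(s') \) is a pullback of \( G(j) \) and \( G(f) \). Using the dialgebra equations \( G(f)\circ\beta=\varepsilon\circ F(f) \) and \( G(j)\circ\delta=\varepsilon\circ F(j) \), together with \( f\circ p=j\circ q \), I get
\[ G(f)\circ\beta\circ F(p)=\varepsilon\circ F(f\circ p) = \varepsilon\circ F(j\circ q) = G(j)\circ\delta\circ F(q). \]
So there is a unique \( \gamma'\colon F(s')\to G(s') \) with \( G(p)\circ\gamma'=\beta\circ F(p) \) and \( G(q)\circ\gamma'=\delta\circ F(q) \). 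These two equations say precisely that \( p \) and \( q \) are dialgebra maps out of \( (s',\gamma') \).

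It remains to show that this square is a pullback in \( \Dialg(F,G) \). Take a cone from \( (x,\xi) \). The mediating arrow \( u \) in \( \C C \) exists uniquely. To see that \( u \) is a dialgebra map, I would check \( G(u)\circ\xi=\gamma'\circ F(u) \) by post-composing with the jointly monic pair \( G(p),G(q) \). Finally \( i\circ p\in C(c) \), so \( i\circ p\in\Dialg(F,G;C)(c,\alpha) \).

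I would record the general principle behind this: a square in \( \Dialg(F,G) \) whose image under \( X \) is one of the (\texttt{Base}\textsubscript{2})-pullbacks of \( \C C \) (which \( G \) preserves) is already a pullback in \( \Dialg(F,G) \). This reuses the joint monicity argument above. It also shows that \( X \) preserves the chosen pullbacks, and \( X \) maps base elements into \( C \) by definition, so \( X \) is a computability transfer.

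For the universal property, let \( K\colon (\C J,J)\to\CB C \) be a transfer and \( \theta\colon FK\To GK \). The 1-dimensional part of the inserter in \( \Cat \) gives the unique functor \( H\colon \C J\to\Dialg(F,G) \), \( x\mapsto (Kx,\theta_x) \). I would then check that \( H \) is a transfer:
\begin{itemize}
\item For \( m\in J(x) \) we have \( K(m)\in C(Kx) \), and \( K(m) \) is a dialgebra map by naturality of \( \theta \). Hence \( H(m) \) lies in the base.
\item For a (\texttt{Base}\textsubscript{2})-pullback in \( \C J \), its image under \( K \) is a pullback of the kind used in \( \C C \). The principle above then makes its image under \( H \) a pullback in \( \Dialg(F,G) \).
\end{itemize}
The 2-dimensional part involves only natural transformations and no base conditions, so it is inherited verbatim from \( \Cat \).

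The main obstacle I expect is the pullback bookkeeping. (\texttt{Base}\textsubscript{2}) only asserts existence of \emph{some} pullback, and transfers preserve only the pullbacks "mentioned" there. So I must make sure the pullback that \( F,G,K \) preserve is exactly the one being lifted. I must also make sure that the pullback chosen in \( \Dialg(F,G) \) is the one \( X \) and \( H \) are required to preserve, since non-replete bases are not invariant under changing the chosen pullback by an isomorphism.
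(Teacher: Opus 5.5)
Your proposal is correct and follows essentially the same route as the paper: you take the (\texttt{Base}\textsubscript{2})-pullback in $\C C$, use that $G$ preserves it to induce the dialgebra structure $\gamma'$ on the pullback object (the paper's arrow $\xi$ in its cube), and inherit the 1- and 2-dimensional universal properties from the inserter in $\Cat$. You also check several points the paper only asserts or omits. These are that the lifted square is a pullback in $\Dialg(F,G)$ (via joint monicity of $G(p),G(q)$), that $X$ preserves the relevant pullbacks, and that the induced functor $H$ is itself a computability transfer. Your worry about choices of pullbacks is harmless, since a functor that preserves one pullback of a cospan preserves every pullback of it.
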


\begin{proof} 
	Clearly, \( \Dialg(F,G;C) \) is closed under identities.
	It remains to show the second condition.
	For this, let \(f \colon (c,\beta) \to (c',\alpha) \), \( g \colon (c'',\gamma) \to (c',\alpha) \) and \( h \colon (c'',\gamma) \to (c''',\delta)\),  such that $f \in \Dialg(F,G;C)(c',\alpha)$ and $h \in \Dialg(F,G;C)(c''',\delta)$ be given. 
	To show condition (\texttt{Base}\textsubscript{2}), we first note that we obtain a pullback square in \( \C C \)
	\[ \begin{tikzcd}
		&f^*(c'') \ar[dl,"h \circ f^*g"'] \ar[r,"f^*g"] \ar[d,"g^*f"'] \pullback & c \ar[d,"f"]
		\\
		c''' &c'' \ar[l,"h"]\ar[r,"g"'] & c'.
	\end{tikzcd} \]
	%in \( \C C \). 
	As \( F,G \) are computability transfers, they preserve pullbacks of arrows in \( C \), and thus, we get the following cube
	\[ \begin{tikzcd}
F\big(f^*(c'')\big) \ar[dr,"\xi",dotted] \ar[rr,"F(f^*g)"] \ar[dd,"F(g^*f)"] && F(c) \ar[dr,"\beta"] \ar[dd,"F(f)"{near end}] 
	\\
											  &G\big((f^*(c'')\big) \pullback \ar[rr,"G(f^*g)"{near start}, crossing over] && G(c) \ar[dd,"G(f)"]
	\\
		F(c'') \ar[dr,"\gamma"'] \ar[rr,"F(g)"{near start}] && F(c') \ar[dr,"\alpha"]
	\\
						       &G(c'') \ar[rr,"G(g)"]\ar[from = uu,"G(g^*f)"{near end}, crossing over]   && G(c').
	\end{tikzcd} \]
	The arrow \( \xi \) is obtained by the universal property of \( G\big((f^*(c'')\big) \) applied to the arrows \( G(f) \circ \beta \circ F(f^*g) \) and \( G(g) \circ \gamma \circ F(g^*f) \), which are equal, as a straightforward calculation shows.
	A pullback of \( f\) along \( g\)---in $\Dialg(F,G)$---is given by the vertical arrows on the left face of the above cube, which are in \( \Dialg(F,G;C)(\gamma) \), since \( g^*f \in C(c'') \).
    Thus we obtain the arrow $\xi$ that makes the following diagram commutative
    \[ \begin{tikzcd}
&&F\big(f^*(c'')\big) \ar[dr,"\xi",dotted]  \ar[dd,"F(g^*f)"] \ar[ddll]
	\\
											  &&&G\big((f^*(c'')\big) \ar[ddll]
	\\
		F(c''')  \ar[dr,"\delta"]  &&F(c'') \ar[dr,"\gamma"'] \ar[ll,"F(h)"] 
	\\
						      & G(c''')& &G(c''),\ar[ll,"G(h)"']\ar[from = uu,"G(g^*f)"{near end}, crossing over] 
	\end{tikzcd} \]
    and the composition $h \circ g^*f$ is the required composition, which lies in \(\Dialg(F,G;C)(c''',\delta)\).
	Clearly, \( X \) is a computability transfer.
	To show the universal property, let a category with base of computability \( \CB E \) together with a computability transfer \( H \colon \CB E \to \CB C \) and a natural transformation \( \beta \colon F \circ H \To G \circ H \).
	The functor \( J \colon \C E \to \Dialg(F,G) \) takes \( e \in E \) to \( \big(H(e),\beta_e\big)\) and \( h \colon e \to e' \) to $H(h)$.
It is immediate to show that this is the only functor, such that \( X \circ J = H \).	
The 2-dimensional part is shown as follows. 
Let computability transfers \( H,H' \colon \CB E \to \CB C \), natural transformations \( \beta \colon F \circ H \To G \circ H , \beta' \colon F \circ H' \to G \circ H' \), and a natural transformation \( \xi \colon H \To H' \), such that the following rectangle commutes
\[ \begin{tikzcd}
	F \circ H \ar[r,"\beta",Rightarrow] \ar[d,"F \ast \xi"',Rightarrow] 
	& G \circ H \ar[d,"G \ast \xi",Rightarrow]
	\\
	F \circ H' \ar[r,"\beta'"',Rightarrow] 
	& G \circ H'.
\end{tikzcd} \]
If \( J,J' \) are functors as in the 1-dimensional part of the universal property, then we define \( \gamma \colon J \To J' \) by the rule \( \gamma_e := \xi_e  \). 
That \(\gamma\) is a natural transformation, follows from the commutativity of the following rectangle
\[ \begin{tikzcd}
	F\big(H(e)\big) \ar[r,"\beta_e"] \ar[d,"F(\xi_e)"'] 
	& G\big(H(e)\big) \ar[d,"G(\xi_e)"]
	\\
	F\big(H'(e)\big) \ar[r,"\beta'_e"'] 
	& G\big(H'(e)\big),
\end{tikzcd} \]
which follows from the commutativity of the preceding rectangle.
A straightforward calculation shows that \( I \ast \gamma = \xi \), 
and it is immediate to show that \( \gamma \) is the only natural transformation satisfying this property.
\end{proof}

\begin{lem}\label{lem: equifier}
	If \( F,G \colon \CB C \to \CB D \) are computability transfers
    %as in the preceding lemma 
    and \( \alpha,\beta \colon F\To G \) are natural transformations,
	then the equifier \( \Eq(\alpha,\beta) \) of \( \alpha \) and \( \beta  \) is given by
	the category \( \Eq(\alpha,\beta) \) with objects those objects \( c \in \C C \), such that \( \alpha_c = \beta_c \), and with arrows the arrows in \( \C C \) between those objects \( c \) in \( \C C \).
	The computability $\Equi(C,D)$ base on \( \Eq(\alpha,\beta) \) consists of all arrows in \( C \) that are in \( \Eq(\alpha,\beta) \), 
    that is 
    \[
    \Equi(C,D)(c) = C(c) \text{ for all }c \in \Eq(\alpha,\beta).
    \]
    The computability transfer \( I \colon \Eq(\alpha,\beta) \to \CB C\) is the inclusion.
\end{lem}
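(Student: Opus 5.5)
The plan is to verify three things in turn. First, \( \Equi(C,D) \) is a base of computability on the full subcategory \( \Eq(\alpha,\beta) \). Second, the inclusion \( I \) is a computability transfer with \( \alpha \ast I = \beta \ast I \). Third, the 1- and 2-dimensional universal properties of Remark~\ref{rem: Rob} hold. The 1-categorical part is the known equifier in \( \Cat \), so the extra work is showing that the base and the relevant pullbacks behave well.

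\emph{The base.} We read \( \Equi(C,D)(c) \) as those \( i \in C(c) \) whose domain also lies in \( \Eq(\alpha,\beta) \). Condition (\texttt{Base}\textsubscript{1}) is immediate, since \( 1_c \in C(c) \). For (\texttt{Base}\textsubscript{2}), take \( i \colon s \to a \) and \( j \colon t \to b \) in \( \Equi(C,D) \) and \( f \colon s \to b \) in \( \Eq(\alpha,\beta) \). Since \( C \) is a base in \( \C C \), there is a pullback \( p := j^*(s) \) in \( \C C \) with \( i \circ f^*j \in C(a) \).

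The key step, and the one I expect to be the main obstacle, is showing that \( p \in \Eq(\alpha,\beta) \). The argument runs as follows.
\begin{itemize}
\item Because \( G \) is a computability transfer, \( G(j) \in D(G(b)) \), so \( G(j) \) is a mono.
\item \( G \) preserves this (\texttt{Base}\textsubscript{2})-pullback, so \( G(f^*j) \) is a pullback of the mono \( G(j) \), and hence is itself a mono.
\item By naturality, \( G(f^*j) \circ \alpha_p = \alpha_s \circ F(f^*j) \) and \( G(f^*j) \circ \beta_p = \beta_s \circ F(f^*j) \).
\item Since \( \alpha_s = \beta_s \), the two right-hand sides agree. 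Cancelling the mono \( G(f^*j) \) gives \( \alpha_p = \beta_p \).
\end{itemize}
Since \( \Eq(\alpha,\beta) \) is a full subcategory of \( \C C \) containing all the vertices of the square, that square is also a pullback in \( \Eq(\alpha,\beta) \). Therefore \( i \circ f^*j \in \Equi(C,D)(a) \).

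\emph{The inclusion.} \( I \) is a computability transfer because \( \Equi(C,D)(c) \subseteq C(c) \), and because each (\texttt{Base}\textsubscript{2})-pullback used above was constructed as a pullback in \( \C C \). The equation \( \alpha \ast I = \beta \ast I \) holds by the definition of the objects of \( \Eq(\alpha,\beta) \).

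\emph{Universal property.} Let \( H \colon \CB E \to \CB C \) be a computability transfer with \( \alpha \ast H = \beta \ast H \). Then \( H \) lands in \( \Eq(\alpha,\beta) \), so it factors uniquely as \( H = I \circ H' \). We check that \( H' \) is a computability transfer.
\begin{itemize}
\item For \( i \in E(e) \) we have \( H(i) \in C(H(e)) \), and its domain \( H(\dom(i)) \) lies in \( \Eq(\alpha,\beta) \). Hence \( H'(i) \in \Equi(C,D)(H(e)) \).
\item Each (\texttt{Base}\textsubscript{2})-pullback of \( E \) is sent by \( H \) to a pullback in \( \C C \) whose vertices lie in \( \Eq(\alpha,\beta) \). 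By fullness, it is still a pullback there.
\end{itemize}
For the 2-dimensional part, given \( \nu \colon H \To K \) we take \( \mu_e := \nu_e \). This is a morphism of \( \Eq(\alpha,\beta) \) by fullness, so naturality of \( \mu \) and the equation \( I \ast \mu = \nu \) are immediate. Uniqueness of \( \mu \) follows because \( I \) is faithful.
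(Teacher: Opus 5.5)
Your proof is correct and follows essentially the same route as the paper. In both, the crux is that the (\texttt{Base}\textsubscript{2})-pullback apex $p$, computed in $\C C$, already lies in $\Eq(\alpha,\beta)$; fullness and the choice $\mu_e := \nu_e$ then handle the rest. The only small difference is how you get $\alpha_p = \beta_p$: you cancel the mono $G(f^*j)$, which needs only $\alpha_s = \beta_s$, while the paper uses uniqueness of the mediating arrow into the pullback $G(p)$, which needs the components to agree on both legs. These are two phrasings of the same uniqueness argument.
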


\begin{proof} 
	From the definition of \( \Eq(\alpha,\beta) \) we get \( \alpha \ast I = \beta \ast I  \).
	It is immediate to show that \(\Equi(C,D)\) is closed under identities. To show condition (\texttt{Base}\textsubscript{2}) for $\Equi(C,D)$, let arrows \( f \colon c\to c', g \colon c'' \to c' \) and $h \colon c'' \to c'''$ in \( \Eq(\alpha,\beta) \), such that \( f \in  \Equi(C,D)(c'), h \in \Equi(C,D)(c''') \).
	We have to show that \(h \circ  g^*f  \in \Equi(C,D)(c''')\). To this end it suffices to show $f^*(c'')\in \Eq(\alpha,\beta)$.
    For this, let the following cube
	\[ \begin{tikzcd}
		F\big(F^*(c'')\big) \ar[rr,"F(f^*g)"] \ar[dd,"F(g^*f)"'] 		
		\ar[dr,shift right = .25em,"\alpha_{f^*(c'')}"'{near end}, end anchor = {north west}] 
		\ar[dr,shift left = .25em,"\beta_{f^*(c'')}", end anchor = {north west}] 
		&& F(c) \ar[dr,"\alpha_c"] 	\ar[dd,"F(f)"{near end}]
	\\
		& G\big(f^*(c'')\big) \pullback
\ar[rr,"G(f^*g)"{near start},crossing over]
		&& G(c) \ar[dd,"G(f)"]
		\\
		F(c'') \ar[dr,"\alpha_{c''}"']  \ar[rr,"F(g)"{near start}] 
		&& F(c') \ar[dr,"\alpha_{c'}"]
		\\
		&G(c'') \ar[rr,"G(g)"']\ar[from = uu,"G(g^*f)"{near end} ,crossing over] 		&& G(c').
	\end{tikzcd} \]
	Since both \( \alpha_{G(f^*(c''))} \) and \( \beta_{G(f^*(c''))} \) make the cube commutative, we get from the pullback property of \( G\big(f^*(c'')\big) \) that they are equal.	
From the definition of the base on \( \Eq(\alpha,\beta) \) we have that \( I  \) is a computability transfer.
	To verify the universal property of the equifier, let \( \CB E \) together with \( H \colon \CB E \to \CB C \), such that \( \alpha \ast H = \beta \ast H \).
	We define \( J \colon \C E \to \Eq(\alpha,\beta) \) by the rules \( J(e) := H(e) \), \( J(f) = H(f) \). 
	This is possible, as the equality \( \alpha_{H(e)} = \beta_{H(e)} \) holds by hypothesis. Since \( H \) is a computability transfer, \( J\) is also one.
	That \( H \) is unique follows from the definition and the condition \( I \circ J = H \). For the proof of the 2-dimensional part, we work as follows. If \( H,H' \colon \CB E \to \CB C \) and \( \gamma \colon H \To H' \), we then define \( \omega \colon J \To J' \)---where \( J \) and \( J' \) are defined from \( H \) and \( H' \) as above, respectively---by the rule \( \omega_e := \gamma_e \). It is immediate to show that \( I \ast \omega = \gamma \), and that \( \omega \) is the only natural transformation satisfying  this property.
\end{proof}

\begin{thm}\label{thm: plimits}
 \( \CatBaseComp \) has all limits of pie weight.
\end{thm}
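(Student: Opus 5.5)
The plan is to reduce the theorem to the three lemmas just proved, using the Power--Robinson characterisation of pie limits in~\cite{powerCharacterizationPieLimits1991}. Their result says that a 2-category has all limits of pie weight if and only if it has all (small) products, inserters and equifiers. Put differently, every pie-weighted limit can be built from products, inserters and equifiers by an explicit iterated construction. So the first step is simply to invoke this characterisation for the 2-category $\CatBaseComp$.

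The second step is to check that the three ingredient limits exist in $\CatBaseComp$ as genuine 2-limits in the sense of Definition~\ref{def: wlimits}. That means the isomorphism of hom-categories must hold, not just the 1-dimensional bijection on objects. For products this is Lemma~\ref{lem: prod}. There I would make explicit the 2-dimensional clause of Remark~\ref{rem: Rob}: a natural transformation $\eta \colon F \To G$ into $\prod_i \C C_i$ is the same as a family of components $\eta_i = \pr_i \ast \eta$. I would also check that the pairing $\langle F_i\rangle_i$ of computability transfers is again a computability transfer. This holds because pullbacks in $\prod_i \C C_i$ are computed componentwise, so preservation of the (\texttt{Base}\textsubscript{2})-pullbacks and of base arrows can be tested in each coordinate. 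For inserters and equifiers, Lemma~\ref{lem: inserter} and Lemma~\ref{lem: equifier} already give both the 1- and the 2-dimensional universal properties listed in Remark~\ref{rem: Rob}. These are exactly the elementary descriptions Power and Robinson show to be equivalent to the weighted limits of Examples~\ref{ex: wlimits}.

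The third step is a naturality check. The isomorphisms $\CatBaseComp[(\C E,E),\lim^F G] \cong [\C W,\Cat]\big(F,\CatBaseComp((\C E,E),G(\textminus))\big)$ must be natural in $(\C E,E)$. For all three constructions the comparison functor is given by postcomposition with the limit projection $X$, $I$, or the $\pr_i$, and the inverse is built by the same formulas. Naturality in precomposition with a computability transfer is therefore immediate.

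The main obstacle I expect is making sure the mediating 1-cells produced in the lemmas really are computability transfers, and not merely functors. This matters because the general reduction of pie limits to products, inserters and equifiers only works inside $\CatBaseComp$ if each universal arrow lives in $\CatBaseComp$. For the inserter I would argue as follows. The mediating functor $J$ sends a base arrow $m \in E(e)$ to $H(m)$, which lies in $C(H(e))$, and hence lies in $\Dialg(F,G;C)$. $J$ also preserves the required pullbacks, because pullbacks in $\Dialg(F,G)$ of base arrows were shown in Lemma~\ref{lem: inserter} to be created by the forgetful functor $X$, and $H = X \circ J$ preserves them. The equifier case is analogous, since the pullback object itself was shown to lie in $\Eq(\alpha,\beta)$. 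Once this is verified, the theorem follows directly from~\cite{powerCharacterizationPieLimits1991}.
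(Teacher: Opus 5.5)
Your proposal is correct and follows the paper's route: the paper's proof is exactly the appeal to \cite[Theorem~2.2]{powerCharacterizationPieLimits1991}, using that $\CatBaseComp$ has products, inserters and equifiers by Lemmas~\ref{lem: prod}, \ref{lem: inserter} and~\ref{lem: equifier}. Your extra check that the mediating functors are themselves computability transfers is sound, and it fills in a step the lemmas state only briefly; the paper does not spell out your naturality step either.
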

 \begin{proof} 
	 It follows immediately from \cite[Theorem~2.2]{powerCharacterizationPieLimits1991}, as \( \CatBaseComp \) has all products, equifiers and inserters by the previous three lemmata.
 \end{proof}
 
\begin{rem}\label{rem: eqs}
	By Theorem~\ref{thm: plimits} \( \CatBaseComp \) has 2-limits of many shapes, including pseudo-pullbacks and powers. 
	However, $\CatBaseComp$ is not complete, as it fails to have arbitrary equalisers. 
    To illustrate the problem that arises in the general case, 
    first assume that---as in the previous cases---the equaliser of two computability transfers $F,G$ can be obtained by taking the equaliser of the underlying functors in the category of categories, and endowing the resulting category with a special base of computability.
    We know that if a computability base in the equaliser category of \( F,G \) existed, then by condition (\texttt{Base}\textsubscript{2}), the following pullback in $\C C$
	\begin{equation} \begin{tikzcd}
		f^*(b) \ar[r,"i^*f"] \ar[d,"f^*i"', hook] \pullback
		& b \ar[d,"i",hook]
		\\
		a \ar[r,"f"'] & c,
	\end{tikzcd} \label{EqCompMod::eq1} \end{equation}
is mapped into \( \C D \) with \( F,G \) respectively, hence we obtain the following diagram in \( \C D \)
	\[ \begin{tikzcd}
	G\big(f^*(b)\big)\pullback \ar[drr,"G(i^*f)"{near start}, to path = {
	(\tikztostart.east) -| (\tikztotarget.north) \tikztonodes}, 
	rounded corners]
	\ar[ddr,"G(f^*i)"{swap, near start}, to path = {
		(\tikztostart.south) |- (\tikztotarget.west) \tikztonodes
	}, rounded corners]
	\\
	& \pullback F\big( f^*(b)\big) \ar[r,"F(i^*f)"] \ar[d,"F(f^*i)"', hook]
	& F(b) \ar[d,"F(i)", hook]
	\\
	& F(a) \ar[r,"f"'] & c,
	\end{tikzcd} \]
	%in \( \C D \) 
    where both the inner and outer square are pullback squares.
	However, we only obtain an isomorphism, not an identity between \( G\big(f^*(b)\big)  \) and \( F\big(f^*(b)\big) \). Hence, this pullback does not lie in the equaliser of \( F \), \( G \) and the base would not satisfy (\texttt{Base}\textsubscript{2}). 
    
    A possible solution to remedy this is to endow the base of computability with choices of pullbacks, that is, to require the stronger condition:
\begin{enumerate}[leftmargin = 4.25em]
	\renewcommand{\labelenumi}{(\texttt{Base}\textsubscript{\theenumi}\textsuperscript{s})}
	\setcounter{enumi}{1}
\item Given \( a,b\in \C C_0 \) and \( i \colon s \to a, j \colon t \to b \), such that \( i \in C(a), j \in C(b) \) and \(  f \colon s \to b \), there is \emph{specific choice} of a pullback \( s \times_b t \) in 
condition (\texttt{Base}\(_2\))
	\[ \begin{tikzcd}
	& s \times_b t \ar[dl,"i \circ f^*j"{swap, near start}, to path = {(\tikztostart.west) -| (\tikztotarget.north) \tikztonodes}, rounded corners] \ar[r,"j^*f"] \pullback \ar[d,"f^*j"',hook] 
	& t \ar[d,"j", hook] 
	\\
		a & s \ar[l,"i",hook] \ar[r,"f"'] & b.
	\end{tikzcd} \]
%	is a pullback square.
\end{enumerate}
The computability transfers would then be required to preserve these specific choices of pullbacks.
%instead of just (\texttt{Base}\textsubscript{2}).
%We make the above remark precise as follows:
Thus the strategy to endow the equaliser in the category of categories with a base of computability to make it the equaliser in $\CatBaseComp$ does not work, in general.
In the remainder of this section we show that the equalizer of arbitrary $F,G$ in $\CatBaseComp$ does not exist, in general.
\end{rem}

%\begin{lem} 
%	Let \( F,G \colon \CB C \to \CB D \) be computability transfers. 
%	If \( I \colon \CB B \to \CB C \) is an equaliser in \( \CatBaseComp \), then \( \C B \cong \Eq(F,G) \) as categories, where the latter is the equaliser of \( F \) and \( G \) in \( \Cat \).
%\end{lem}

%\begin{proof} 
%	Suppose we are given \( I \colon \CB B \to \CB C \) as in the lemma. 
%	Then for an arbitrary functor \( H \colon \C E \to \C C\) we obtain that \( H \) is a computability transfer \( H \colon (\C E, \total) \to \CB C \), and thus we obtain a unique computability transfer \( J_H \colon (\C E, \total) \to \CB B \) such that \( I \circ J_H = H \). 
%	But this \( J_H \) is simply a functor, so \( H \colon \C B \to \C C \) fulfils the universal property of an equaliser in \( \Cat \), hence we obtain an isomorphism of categories \( \C B \cong \Eq(F,G) \).
%\end{proof}
%
\begin{lem}\label{lem: cospan}
	If \( F,G \colon \CB C \to \CB D \) are computability transfers and 
	if \( I \colon \CB B \to \CB C \) is an equaliser in \( \CatBaseComp \), then for any cospan \( m \in C(c), f \colon c'' \to c \), that is
	\[ \begin{tikzcd}
		c'' \ar[r,"f"] & c & c' \ar[l,"m"'] 
	\end{tikzcd} \]
	for an arbitrary object \( c  \) of \( \C C \), such that \( F(m) = G(m), F(f) = G(f) \), there is a unique cospan \( k,l  \) in \( \C B \) with \( I(k) = m, I(l) = f \), such that \( k \) is in the base of computability \( B \).		
\end{lem}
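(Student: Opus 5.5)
The plan is to probe the universal property of the equaliser with a small test object whose underlying category is the generic cospan. Let $\C{V}$ be the category with three objects $0,1,2$ and two non-identity arrows $u \colon 0 \to 2$ and $v \colon 1 \to 2$, so $\C V$ is the shape $0 \xrightarrow{u} 2 \xleftarrow{v} 1$. Equip $\C V$ with the base $V$ given by $V(2) := \{1_2, v\}$ and $V(0) := \{1_0\}$, $V(1) := \{1_1\}$. The first step is to check that $V$ is a base of computability.

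(\texttt{Base}\textsubscript{1}) is immediate. For (\texttt{Base}\textsubscript{2}) we must find pullbacks in $\C V$ of $1_2$ or $v$ along every arrow into the relevant codomain, and check that the composites stay in the base. Every arrow of $\C V$ is mono, since there are no parallel pairs into any object.
\begin{itemize}
\item Pulling back $v$ along $v$ gives the square with $1_1, 1_1$, which is a pullback because $v$ is mono.
\item Pulling back $v$ along $1_2$ gives $v$ itself.
\item Pulling back along identities is trivial.
\item Pulling back $v$ along $u$ would require an object with arrows to both $0$ and $1$. No such object exists in $\C V$, so this pullback does not exist.
\end{itemize}
This last case is not required: (\texttt{Base}\textsubscript{2}) only asks for pullbacks of $j \in V(b)$ along arrows $f \colon s \to b$ where $s$ is the domain of some $i \in V(a)$. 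Since $\dom(v)=1$, we only pull back along arrows out of $1$ or $2$, namely $v$, $1_1$ and $1_2$. Each composite $i \circ f^*j$ is then $1_2$, $v$ or an identity, so it lies in $V$. I expect this bookkeeping, and choosing the base so that every required pullback exists, to be the main technical point.

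Next, define the functor $H \colon \C V \to \C C$ by $0 \mapsto c''$, $1 \mapsto c'$, $2 \mapsto c$, $u \mapsto f$ and $v \mapsto m$. I check that $H$ is a computability transfer. It sends base arrows to base arrows, because $m \in C(c)$ and identities lie in the base by (\texttt{Base}\textsubscript{1}). It preserves the pullbacks listed above: these are the kernel-pair square of the mono $v$ and trivial squares, and each goes to the corresponding square for the mono $m$. This holds because $m$ is mono, as $C(c) \subseteq \Mon(\textminus, c)$. The hypotheses $F(m)=G(m)$ and $F(f)=G(f)$ give $F\circ H = G \circ H$ on the generating arrows, hence on all of $\C V$, which is free on the cospan.

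By the universal property of the equaliser $I$ there is a unique computability transfer $J \colon (\C V,V) \to \CB B$ with $I \circ J = H$. Put $k := J(v)$ and $l := J(u)$. These form a cospan in $\C B$ with $I(k)=m$ and $I(l)=f$, and $k \in B(J(2))$ because $J$ sends $V$ into $B$.

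For uniqueness, suppose $k',l'$ is another such cospan with $k'$ in $B$. It defines a functor $J' \colon \C V \to \C B$. Its image of the base is $\{1, k'\} \subseteq B$. It also preserves the relevant pullbacks: the kernel pair of the mono $k'$, since base arrows are monic, together with the trivial squares. So $J'$ is a computability transfer with $I \circ J' = H$, and uniqueness in the universal property gives $J' = J$, hence $k'=k$ and $l'=l$.
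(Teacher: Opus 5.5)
There is a genuine gap, and the paper's own proof has the same one: your test object is, up to renaming the legs, the paper's category $\C S$. The step that fails is the check that $(\C V,V)$ is a base of computability. By (\texttt{Base}\textsubscript{1}) we have $1_0\in V(0)$. So (\texttt{Base}\textsubscript{2}) with $i=1_0$, $f=u\colon 0\to 2$ and $j=v\in V(2)$ demands a pullback of $v$ along $u$ in $\C V$, which is exactly the one you observed does not exist. Your dismissal (``since $\dom(v)=1$, we only pull back along arrows out of $1$ or $2$'') overlooks that $s$ ranges over the domains of all base arrows, identities included. Together, (\texttt{Base}\textsubscript{1}) and (\texttt{Base}\textsubscript{2}) force every base arrow to have pullbacks along every arrow into its codomain. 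So $J$ is never produced, and your uniqueness argument, which also lives on $(\C V,V)$, falls with it. In the paper's $\C S$ there is likewise no pullback of $\leq_0$ along $\leq_1$.

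Adjoining the missing pullback object to $\C V$ does not help. $H$ would have to send it to a pullback of $m$ along $f$ in $\C C$. Then $F\circ H=G\circ H$ would require $F$ and $G$ to agree on the nose on that pullback. The hypotheses do not give this, and it is precisely what fails in Proposition~\ref{prp: graphs}.

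The way out is never to put $u$ and a base arrow $v$ into the same test object. Let $\C W$ be the single arrow $v\colon 1\to 2$.
\begin{itemize}
\item With $W(2)=\{1_2,v\}$, $(\C W,W)$ is a genuine base: only the pullbacks of $v$ along $1_2$ and along $v$ are needed, and $v$ is monic. The assignment $v\mapsto m$ is a computability transfer because $m$ is monic, so the kernel-pair square is preserved. Factoring through $I$ gives $k\in B$ with $I(k)=m$.
\item The cospan $\C V$ with the identity base, mapped by $u\mapsto f$, $v\mapsto m$, gives a cospan $(l,k')$ in $\C B$ over $(f,m)$.
\item Now take $\C W$ with the identity base, out of which every functor is a transfer. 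It has two factorisations of $v\mapsto m$ through $I$, namely $v\mapsto k$ and $v\mapsto k'$. The uniqueness half of the universal property forces $k=k'$, so $(k,l)$ is the required cospan.
\end{itemize}
Uniqueness is obtained the same way. Two candidates $(k_1,l_1)$ and $(k_2,l_2)$ with $k_1,k_2\in B$ give two transfers $(\C W,W)\to\CB B$ lifting $v\mapsto m$, forcing $k_1=k_2$. They then give two transfers out of $\C V$ with the identity base lifting $(f,m)$, forcing $l_1=l_2$.
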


\begin{proof} 
	Let \( \C S \) be the category with objects \( 0,1,2 \) and  non-identity arrows \( \leq_0 \colon 0 \to 2, \leq_1 \colon 1 \to 2 \). 
	We define a base of computability \( S \) on \( \C S \) by setting \( S(0) = \{1_0\}, S(2) = \{\leq_0, 1_2\}, S(1) = \{1_1\}	 \).
	Considering a cospan \( m \colon c' \to c, f \colon c'' \to c \) in \( \C C \) with \( F(m) = G(m), F(f) = G(f) \), we define the computability transfer \( H \colon \CB S \to \CB C \) that sends \( \leq_0 \) to \( m \) and \( \leq_1 \) to \( f \). 
	Obviously \( F \circ H = G \circ H \), hence, there is a unique \( J_H \colon \CB S \to \CB B \), such that \( I \circ J_H = H \). 
%It is then immediate that
Clearly, \( J_H(\leq_0) \) is the unique arrow \( k \) in \( \C B \) with \( I(k) = m \), and \( J_H(\leq_1)\) is the unique arrow \( l \) with \( I(l) = f \). It is immediate to show that \( k \) is in \( B(J_H(2)) \), since \( \leq_0 \in S(2) \) and \( J_H \) is a computability transfer.
\end{proof}

\begin{prop}\label{prp: graphs}
If \( \C C, \C D \) are the categories generated by the graphs
\[
	 \begin{tikzcd}
		 0 \ar[r,"f"] \ar[d,"g"'] 
		 & 1 \ar[d,"h"]
		 \\
		 2 \ar[r,"i"'] & 3
	\end{tikzcd} 
	\quad\hbox{and}\quad
	\begin{tikzcd} 
		0' \ar[dr,"t", shift left = .5ex] \ar[from = dr,shift left = .5ex, "s"]
		\ar[drr,"k"{near start}, to path = {(\tikztostart) -| (\tikztotarget)\tikztonodes}, rounded corners]
		\ar[ddr,"l"'{near start}, to path ={(\tikztostart) |- (\tikztotarget) \tikztonodes}, rounded corners]
		\\
		& 0 \ar[r,"f"] \ar[d,"g"'] 
		 & 1 \ar[d,"h"]
		 \\
		 &2 \ar[r,"i"'] & 3,
	\end{tikzcd}
\]
respectively, we can endow \( \C C \) with a base of computability \( C \) by setting
\[
	C(0) = \{1_0\}, \quad C(1) = \{1_1\}, \quad C(2) = \{1_2,g\}, \quad C(3) = \{1_3, h\},
\]
and \( \C D  \) with a base of computability \( D \) by setting 
\[
	D(0') = \{1_{0'}\}, \quad D(0) = \{1_0\}, \quad D(1) = \{1_1\}, \quad D(2) = \{1_2,g, l\}, \quad D(3) = \{1_3, h\}.
\]
Then the computability transfers \( F,G \) mapping \( 1,2,3 \) in \( \C C \)---and the corresponding arrows between them---to their counterparts in \( \C D \), but 
\[
	F(0) = 0, F(f) = f, F(f) = g, \quad G(0) = 0', G(f) = k, G(g) = l,
\]
have no equaliser in \( \CatBaseComp \).
\end{prop}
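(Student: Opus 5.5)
We show first that the data in the statement are legitimate, and then that an equaliser $I \colon \CB B \to \CB C$ of $F,G$ would force an object of $\C B$ to be sent by $I$ to $0$, which $F\circ I = G\circ I$ forbids. Throughout we read $\C C$ as the commutative square, so that $0$ is a pullback of $h$ along $i$. We read $\C D$ so that $s,t$ are mutually inverse and $k = f\circ t$, $l = g\circ t$; both $0$ and $0'$ are then pullbacks of $h$ along $i$. We also read the typo ``$F(f)=g$'' as $F(g)=g$.

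\emph{Step 1 (well-definedness).} We check (\texttt{Base}\textsubscript{1}) and (\texttt{Base}\textsubscript{2}) for $C$ and $D$ by listing the few nontrivial cospans. For $C$ the only non-identity base arrows are $g$ and $h$. The relevant pullbacks are trivial ones (along identities) and the square $0$ over $h$ and $i$. In each case the composite $i'\circ f^*j$ lands in $\{1_2, g\}$ or $\{1_3, h\}$.

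For $D$ the same holds, except that the pullback of $h$ along $i$ may be chosen as $0'$. Its leg $l$ was put into $D(2)$ precisely so that (\texttt{Base}\textsubscript{2}) holds for that choice. Then $F$ and $G$ send base arrows to base arrows: $F(g)=g$ and $G(g)=l$ both lie in $D(2)$, and $F(h)=G(h)=h$. Each also maps the pullback square at $0$ to a pullback square in $\C D$, namely at $0$ for $F$ and at $0'$ for $G$. So both are computability transfers.

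\emph{Step 2 (lifting the cospan).} Suppose an equaliser $I \colon \CB B \to \CB C$ exists. The cospan $2 \xrightarrow{i} 3 \xleftarrow{h} 1$ satisfies $h \in C(3)$, $F(h)=G(h)$ and $F(i)=G(i)$. By Lemma~\ref{lem: cospan} there is a cospan $b_2 \xrightarrow{l'} b_3 \xleftarrow{k'} b_1$ in $\C B$ with $I(l')=i$, $I(k')=h$ and $k' \in B(b_3)$.

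\emph{Step 3 (the contradiction).} Apply (\texttt{Base}\textsubscript{2}) in $\CB B$ with $1_{b_2} \in B(b_2)$, $j = k' \in B(b_3)$ and the arrow $l'$. This gives a pullback square in $\C B$ at some object $p$ over $k'$ and $l'$. Since $I$ is a computability transfer, it preserves this pullback. Hence $I(p)$ is a pullback of $h$ along $i$ in $\C C$.

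In the free commutative square $\C C$ the only isomorphisms are identities. So $I(p)=0$, and then
\[
F(I(p)) = 0 \neq 0' = G(I(p)),
\]
contradicting $F\circ I = G\circ I$. Therefore no equaliser exists.

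The main obstacle is Step 1, namely pinning down the intended relations in $\C D$ and checking (\texttt{Base}\textsubscript{2}) exhaustively for every pullback choice. The key points are that both $0$ and $0'$ are pullbacks, and that $l \in D(2)$ makes the $0'$-choice admissible. Once this is settled, Steps 2 and 3 are a direct application of Lemma~\ref{lem: cospan} and pullback preservation.
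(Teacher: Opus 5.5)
Your proposal is correct and follows essentially the same route as the paper's proof. You lift the cospan $i\colon 2\to 3$, $h\colon 1\to 3$ to the putative equaliser via Lemma~\ref{lem: cospan}, and use (\texttt{Base}\textsubscript{2}) there to get a pullback that $I$ must preserve. That pullback can only land on the unique pullback $0$ in $\C C$, which contradicts $F\circ I=G\circ I$. The one difference is cosmetic: you read off the contradiction on the object ($F(0)=0\neq 0'=G(0)$), while the paper reads it off the leg ($F(g)=g\neq l=G(g)$).
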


\begin{proof} 
First, we show that \( C,D \) are bases of computability. 
%Clearly, the only square in \( \C C \)---whose horizontal or vertical arrows are not identities---is commutative and the only object with arrows to both \( 1 \) and \( 2 \) is \( 0 \), hence, the universal property of a pullback holds trivially. Both squares in \( \C D \) consisting of \( 0,1,2,3 \) and \( 0',1,2,3 \) are pullback squares. For that, we observe that the square \( 0,1,2,3\) commutes, and the only object besides \( 0 \) with arrows to \( 1 \) and \( 2 \) is \( 0' \). The square \( 0',1,2,3 \) commutes and the unique mediating arrow is \( t \), thus, the universal property of a pullback holds.
%An analogue argument shows that \( 0',1,2,3 \) is a pullback square.
That they are closed under identities, is immediate to show. 
We exhibit only the pullback squares where all arrows of the cospan are not idenditites.
For $C$ these pullback squares are 
\[
\begin{tikzcd}
    0 \ar[r,"f"] \ar[d,"g"'] \pullback & 1 \ar[d,"h"]
    \\
    2 \ar[r,"i"'] & 3,
\end{tikzcd}
\quad\hbox{and}\quad
\begin{tikzcd}
    0 \ar[d,"1_0"] \ar[r,"f"] \pullback & 1 \ar[d,"h"]
    \\
    0 \ar[r,"i \circ g"'] & 3.
\end{tikzcd}
\]
In all cases the pulled back arrows lies in $C$, as required.
For $D$ we note that all necessary pullback squares involving $h$ are given by 
\[ 
\begin{tikzcd}
    0 \ar[r,"f"] \ar[d,"g"'] \pullback & 1 \ar[d,"h"]
    \\
    2 \ar[r,"i"'] & 3,
\end{tikzcd}
\quad
\begin{tikzcd}
    0 \ar[d,"1_0"] \ar[r,"f"] \pullback & 1 \ar[d,"h"]
    \\
    0 \ar[r,"i \circ g"'] & 3
\end{tikzcd}
\quad\hbox{and}\quad
\begin{tikzcd}
    0' \ar[d,"1_{0'}"] \ar[r,"k"] \pullback & 1 \ar[d,"h"]
    \\
    0' \ar[r,"i \circ l"'] & 3.
\end{tikzcd}
\]
Those for $g,l$ are
\[
\begin{tikzcd}
    0' \ar[d,"1_{0'}"'] \ar[r,"t"] \pullback & 0 \ar[d,"g"]
    \\
    0' \ar[r,"l"'] & 2
\end{tikzcd}
\quad\hbox{and}\quad
\begin{tikzcd}
    0 \ar[d,"1_{0}"'] \ar[r,"s"] \pullback & 0' \ar[d,"l"]
    \\
    0 \ar[r,"g"'] & 2.
\end{tikzcd}
\]
It is then straightforward to show that \( F,G \) are computability transfers. 
If there is an equaliser \( I \colon \CB E\to \CB C \) of \( F,G \) in \( \CatBaseComp \), then by Lemma~\ref{lem: cospan} there is a cospan \( \begin{tikzcd}[cramped] e'' \ar[r,"k"] & e & e' \ar[l,"k"'] \end{tikzcd} \) in \( \C E\) 
that maps to the cospan \( \begin{tikzcd}[cramped]
	2 \ar[r,"i"] & 3 & 1 \ar[l,"h"']
\end{tikzcd}
 \) and 
 %fulfils
 \( k \in E(e) \).
Hence, \( k \) is in 
%the base 
\( E \), and the following diagram is a pullback
\[ \begin{tikzcd}
e'^*(e'') \ar[r] \ar[d,"l^*k"'] \pullback & e' \ar[d,"k"]
\\
	e'' \ar[r,"l"'] & e.
\end{tikzcd} \]
%exist. 
Since \( I \) is pullback-preserving, we get \( I\big(l^*k\big) = g \), while \( G(g) \neq F(g) \). Consequently, \( I \)
%\( \CB E \) 
cannot be an equaliser.
\end{proof}

\section{\texorpdfstring{$\CatBaseComp$}{CatBaseComp} and Grothendieck fibrations}\label{sec: fibr}
In this section we show that a Grothendieck fibration 
lifts a base of computability in the base category to a base of computability in the total category of the fibration. Furthermore, using Grothendieck fibrations we regain some pullbacks in $\CatBaseComp$, which do not exist, in general, as it is explained in section~\ref{sec: limits}.

\begin{lem}\label{lem: fib}
    Let $p \colon \C E \to \C B$ be a Grothendieck fibration and $f \colon e' \to e, g \colon e'' \to e$ be a cospan in $\C E$.
    If $g$ is $p$-cartesian for $p(g)$ and $e$, and a pullback of $p(f)$ and $p(g)$ exists in $\C C$, then there exists a pullback of $f$ and $g$ in $\C E$.
\end{lem}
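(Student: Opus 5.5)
The plan is to pull $g$ back along $f$ by lifting the pullback from the base category (written $\C C$ in the statement, but meaning $\C B$) and then using cartesianness to produce the second leg. Write $u := p(f)\colon p(e')\to p(e)$ and $v := p(g)\colon p(e'')\to p(e)$. Choose a pullback square in $\C B$
\[ \begin{tikzcd}
    x \ar[r,"v'"] \ar[d,"u'"'] \pullback & p(e') \ar[d,"u"] \\
    p(e'') \ar[r,"v"'] & p(e).
\end{tikzcd} \]
Since $p$ is a Grothendieck fibration, I choose a $p$-cartesian lift $\bar v'\colon d \to e'$ of $v'$ at $e'$. Then $f\circ \bar v'$ lies over $u\circ v' = v\circ u'$. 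Since $g$ is cartesian over $v$, there is a unique $\bar u'\colon d\to e''$ over $u'$ with $g\circ \bar u' = f\circ \bar v'$. I claim that the square $(d,\bar u',\bar v')$ is a pullback of $f$ and $g$ in $\C E$.

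For the universal property, let $a\colon y\to e'$ and $b\colon y\to e''$ be given with $f\circ a = g\circ b$. Applying $p$ gives $u\circ p(a) = v\circ p(b)$, so the pullback in $\C B$ yields a unique $w\colon p(y)\to x$ with $v'\circ w = p(a)$ and $u'\circ w = p(b)$. Cartesianness of $\bar v'$ then gives a unique $h\colon y\to d$ over $w$ with $\bar v'\circ h = a$. It remains to show $\bar u'\circ h = b$. Both $g\circ\bar u'\circ h = f\circ\bar v'\circ h = f\circ a$ and $g\circ b = f\circ a$ hold, and both $\bar u'\circ h$ and $b$ lie over $u'\circ w = p(b)$. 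The uniqueness clause in the cartesianness of $g$ therefore forces $\bar u'\circ h = b$.

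For uniqueness of $h$, suppose $h'\colon y\to d$ also satisfies $\bar v'\circ h'=a$ and $\bar u'\circ h'=b$. Then $p(h')$ satisfies $v'\circ p(h') = p(a)$ and $u'\circ p(h') = p(b)$, so $p(h') = w$ by uniqueness in $\C B$. Cartesianness of $\bar v'$, whose factorisations are unique once the base arrow $w$ is fixed, then gives $h' = h$.

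The main subtlety I expect is the uniqueness step. Depending on the definition of cartesian in use, I must check that uniqueness is only guaranteed over a specified base arrow, which is why $p(h')=w$ has to be established first from the pullback property in $\C B$. Everything else is a routine application of the two cartesian universal properties.
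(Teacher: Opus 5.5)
Your proposal is correct and follows essentially the same route as the paper. You lift the leg of the base pullback cartesianly at $e'$, obtain the other leg from the cartesianness of $g$, and verify the universal property using both cartesian properties; your uniqueness step, which first forces $p(h')=w$ via the pullback in the base, spells out what the paper compresses into a one-line appeal to the cartesianness of $m$.
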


\begin{proof}
    Assuming we are given the following pullback 
    \[
    \begin{tikzcd}
        c \ar[r,"h"] \ar[d,"k"'] \pullback & p(e'') \ar[d,"p(g)"]
        \\
        p(e') \ar[r,"p(f)"'] & p(e),
    \end{tikzcd}    
    \]
    we choose a $p$-cartesian lift $m$  for $k$ and $e'$. 
    Using the universal property of $g$ being $p$-cartesian on $f \circ m$, we obtain a unique arrow $n$
    that makes the following square commutative
    \[ 
    \begin{tikzcd}
        e''' \ar[r,"n", dotted] \ar[d,"m"'] & e'' \ar[d,"g"]
        \\
        e' \ar[r,"f"'] & e.
    \end{tikzcd}\]
    To show that this is a pullback square, let $\tilde e$ with $i \colon \tilde e \to e'', j \colon \tilde e \to e'$ be given, such that the obvious square commutes.
    Applying $p$, and then using the universal property of the pullback in $\C C$, we obtain a unique arrow $o$ that makes the following diagram commutative
    \[ \begin{tikzcd}
    p(\tilde e) \ar[drr,"p(i)", to path = {(\tikztostart.east) -| (\tikztotarget.north) \tikztonodes},rounded corners] 
    \ar[ddr,"p(j)"', to path = {(\tikztostart.south) |- (\tikztotarget.west) \tikztonodes},rounded corners] \ar[dr,"o", dotted]
    \\
        &c \ar[r,"h"] \ar[d,"k"'] \pullback & p(e'') \ar[d,"p(g)"]
        \\
        &p(e') \ar[r,"p(f)"'] & p(e).
    \end{tikzcd}    
    \]
    Using the universal property of $m$ being $p$-cartesian for $k$ and $e'$, we obtain an arrow $l$ that makes the left triangle in the following diagram commutative
    \[
    \begin{tikzcd}
        \tilde e \ar[drr,"i", to path = {(\tikztostart.east) -| (\tikztotarget.north) \tikztonodes},rounded corners]
        \ar[ddr,"j"', to path = {(\tikztostart.south) |- (\tikztotarget.west) \tikztonodes},rounded corners] 
        \ar[dr,"l",dotted]
        \\
        &e''' \ar[r,"n"] \ar[d,"m"'] & e'' \ar[d,"g"]
        \\
        &e' \ar[r,"f"'] & e.
    \end{tikzcd}\]
To show that the upper right triangle commutes, we use the fact that $g$ is $p$-cartesian, and the following commutative triangles 
    \[
    \begin{tikzcd}
        \tilde e \ar[d,"n \circ l"'] \ar[dr,"f \circ j"]
        \\
        e'' \ar[r,"g"'] & e 
    \end{tikzcd}
    \quad\text{and}\quad
    \begin{tikzcd}
        \tilde e \ar[d,"i"'] \ar[dr,"f \circ j"]
        \\
        e'' \ar[r,"g"'] & e 
    \end{tikzcd}
    \]
    are mapped to the commutative triangle 
    \[
    \begin{tikzcd}
        p(\tilde e) \ar[d,"p(i)"'] \ar[dr,"p(f \circ j)"]
        \\
        p(e'') \ar[r,"p(g)"'] & p(e),
    \end{tikzcd}
    \]
hence $n \circ l = i$.
The uniqueness of $l$ follows from the fact that $m$ is $p$-cartesian.
\end{proof}

\begin{lem}\label{prp: monos}
	If \( p \colon \C E \to \C B \) is a Grothendieck fibration, 
	\( f \colon b' \to b \) is a mono, and \( g \colon e' \to e \) is \( p \)-cartesian for \( f  \) and \( e \), then \( g \)	 is a mono.
\end{lem}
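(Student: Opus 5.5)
The plan is to verify the mono property directly from the definition, using the universal property of the $p$-cartesian arrow $g$ together with the fact that $p(g)=f$ is a mono in $\C B$. So let $u,v \colon \tilde e \to e'$ be arrows in $\C E$ with $g \circ u = g \circ v$; we must show $u = v$.

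First, apply $p$. This gives $f \circ p(u) = p(g)\circ p(u) = p(g \circ u) = p(g\circ v) = f \circ p(v)$. Since $f$ is a mono in $\C B$, we conclude $p(u) = p(v)$. Call this common arrow $w \colon p(\tilde e) \to b'$.

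Next, use that $g$ is $p$-cartesian for $f$ and $e$. Consider the arrow $h := g \circ u = g \circ v \colon \tilde e \to e$, together with the arrow $w \colon p(\tilde e) \to b' = p(e')$. These satisfy $f \circ w = p(h)$. By the universal property of the cartesian arrow, there is a \emph{unique} arrow $t \colon \tilde e \to e'$ with $p(t) = w$ and $g \circ t = h$. Both $u$ and $v$ meet these two conditions, so uniqueness gives $u = v$. Hence $g$ is a mono.

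There is no real obstacle here; the argument is two lines once the definitions are unfolded. The only point needing care is the form of cartesianness used. I would use the standard (Grothendieck) notion, in which the lift is unique among arrows lying over the prescribed $w$ for an arbitrary arrow in the base. This is exactly why the condition $p(u)=p(v)$, obtained from $f$ being mono, must be established first. If the paper intends the weaker ``prone over identities plus factorisation'' variant, the same reasoning goes through, because the factorisation is through a prescribed base arrow in either case.
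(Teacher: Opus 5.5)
Your proof is correct and takes essentially the same approach as the paper: both apply the uniqueness clause of $g$ being $p$-cartesian to $g\circ u = g\circ v$. You also make explicit the step $p(u)=p(v)$, obtained from $f$ being mono, which the paper uses only tacitly when it lets both $k$ and $l$ fill the triangle over $p(k)$; your closing aside on a weaker variant is not needed and is slightly off, since weak cartesianness only controls arrows lying over $f$ itself, so one would first need Lemma~\ref{lem: weakcart}, but this does not affect your argument.
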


\begin{proof} 
	If \( k,l \colon e'' \to e' \), such that \( g \circ k = g \circ l \), then we get the following commutative triangles
	\[ \begin{tikzcd}
		e'' \ar[dr,"g \circ k"] \ar[d,"?"'] \\
		e' \ar[r,"g"'] & e
	\end{tikzcd} 
\quad\hbox{ and }\quad
\begin{tikzcd} 
	b'' \ar[dr,"f \circ p(k)"] \ar[d,"p(k)"']
	\\
	b' \ar[r,"f"'] & b.
\end{tikzcd}
\]
As both \( k,l \) for \( ? \) make the left triangle commutative, we get by  uniqueness---from \( g \) being \( p \)-cartesian---that \( k = l \).	
\end{proof}

\begin{prop}\label{prp: FibLiftBase} 
	If \( F \colon \C E \to \C B \) is a Grothendieck fibration and \( B \) is a base of computability in \( \C B \), then \( F^{-1}(B) \), defined, for every \( e \in \C E \), by
	\[
		F^{-1}(B)(e) = \big\{ i \in \Mon(\textminus, e) ~\vert~ i \ \text{is a cartesian lift of }j \in B\big(F(e)\big)\big\},
	\]
 is a base of computability in \( \C E \).
\end{prop}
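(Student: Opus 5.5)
We must verify (\texttt{Base}\textsubscript{1}) and (\texttt{Base}\textsubscript{2}) for \( F^{-1}(B) \), using Lemma~\ref{lem: fib} and Lemma~\ref{prp: monos} as the main tools.

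For (\texttt{Base}\textsubscript{1}), we fix \( e \in \C E \). Since \( 1_{F(e)} \in B(F(e)) \) by (\texttt{Base}\textsubscript{1}) for \( B \), it suffices to observe that \( 1_e \) is a mono lying over \( 1_{F(e)} \). We then check directly that identities are cartesian: for any \( h \colon e'' \to e \) and any \( w \) with \( 1_{F(e)} \circ w = F(h) \), we have \( w = F(h) \), and \( h \) itself is the unique factorisation through \( 1_e \). Hence \( 1_e \in F^{-1}(B)(e) \).

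For (\texttt{Base}\textsubscript{2}), we let \( i \colon s \to a \) be in \( F^{-1}(B)(a) \), cartesian over some \( i' \in B(F(a)) \), and \( j \colon t \to b \) be in \( F^{-1}(B)(b) \), cartesian over some \( j' \in B(F(b)) \). We also let \( f \colon s \to b \) be arbitrary. Applying \( F \), we obtain \( i' \in B(F(a)) \), \( j' \in B(F(b)) \) and \( F(f) \colon F(s) \to F(b) \), where \( F(s) = \dom(i') \) and \( F(t) = \dom(j') \). By (\texttt{Base}\textsubscript{2}) for \( B \), there is a pullback of \( j' \) along \( F(f) \) in \( \C B \), and \( i' \circ F(f)^* j' \in B(F(a)) \).

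We then lift this pullback. Since \( j \) is cartesian, Lemma~\ref{lem: fib}, applied to the cospan \( f, j \), yields a pullback square in \( \C E \). Its construction matters: the leg \( f^*j := m \) is chosen as a cartesian lift of \( F(f)^*j' \) at \( s \), and the other leg \( n \) is induced by the cartesianness of \( j \). So the pullback that (\texttt{Base}\textsubscript{2}) asks for exists, and we use exactly this one, since the non-replete condition only requires existence of one suitable pullback. It remains to show that \( i \circ m \in F^{-1}(B)(a) \). First, \( i \circ m \) lies over \( i' \circ F(f)^*j' \), which is in \( B(F(a)) \). Second, \( i \circ m \) is cartesian, because composites of cartesian arrows are cartesian; this is a standard fact that we verify by two successive unique factorisations. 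Third, \( i \circ m \) is a mono: it is cartesian over the mono \( i' \circ F(f)^* j' \), since elements of \( B \) are monos, so Lemma~\ref{prp: monos} applies.

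The main obstacle is the choice of pullback. A pullback of \( j \) along \( f \) in \( \C E \) need not, a priori, have its left leg cartesian, and the base is not assumed replete. We avoid this by explicitly selecting the pullback built in the proof of Lemma~\ref{lem: fib}, whose left leg is cartesian by construction. We also need to confirm that the pullback in \( \C B \) supplied by (\texttt{Base}\textsubscript{2}) for \( B \) is the very one fed into Lemma~\ref{lem: fib}, so that the image of the composite really is \( i' \circ F(f)^*j' \in B(F(a)) \) and not some other representative.
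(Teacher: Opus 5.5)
Your proof is correct and follows essentially the same route as the paper: you build the pullback via Lemma~\ref{lem: fib}, so that the leg $f^*j$ is a chosen cartesian lift, and you finish with the fact that composites of cartesian arrows are cartesian. Your version is more careful than the paper's on two points it leaves implicit: you feed the specific pullback supplied by (\texttt{Base}\textsubscript{2}) for $B$ into Lemma~\ref{lem: fib}, which guarantees that the composite lies over an element of $B\big(F(a)\big)$, and you verify the mono condition via Lemma~\ref{prp: monos}.
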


\begin{proof}
	%Assume we are given \( \C E, \C B, F \) as in the proposition. 
	%First, we show that \( B_F \) contains identities, and is closed under composition and pullbacks. 
    To show that \( F^{-1}(B) \) contains all unit-arrows, let \( e \in \C E_0 \). We show that \( 1_e \) is a cartesian lift of  \( 1_{F(e)} \).
	If \( g \colon d \to e \) in \( \C E \) and \( h \colon F(d) \to F(e) \), such that the following triangle commutes
	\[ \begin{tikzcd}
		F(d) \ar[dr,"F(g)"] \ar[d,"h"']
		\\
		F(e) \ar[r,"1_{F(e)}"'] & F(e),
	\end{tikzcd} \]
	%commutes, 
    then \( h = F(g) \), and the only possible arrow making the triangle in \( \C E \) commutative is \( g \). Clearly, \( F(g) = h \), and 
	% This shows the desired claim as obviously
     \( F(1_e) = 1_{F(e)} \).
	 To show that \( F^{-1}(B) \) fulfills the second condition, let \( i \colon e' \to e \) in \( F^{-1}(B)(e) \), \( f \colon e'' \to e \), and $j \colon e'' \to e'''$ in $F^{-1}(B)(e''')$ be given.
By Lemma~\ref{lem: fib} we obtain the following pullback in $\C E$
\[ 
\begin{tikzcd}
    \tilde e \ar[r] \ar[d,"h"'] \pullback & e' \ar[d,"i"] 
    \\
    e'' \ar[r,"f"'] & e,
\end{tikzcd}\]
as we can form a pullback in $\C C$ from $B$ being a base, and thus (\texttt{Base}\textsubscript{2}), and $p(i) \in B(p(e))$.
By the proof of Lemma~\ref{lem: fib} $h$ is $p$-cartesian for $p(h)$ and $e''$, and as $j$ is $p$-cartesian for $p(j)$ and $e'''$, by definition of $F^{-1}(B)$, and the fact that the composition of cartesian arrows is cartesian, we get that $j \circ h$ is $p$-cartesian for $p(j \circ h)$ and $e'''$.
\end{proof}

%Notice that \( i \in \Mon(-,e) \) follows immediately.
%does not have to be checked, as it is automatic.

\begin{cor}\label{cor: FibLiftCompTrans} 
	If  \( F \colon \C E \to \C B \) is a Grothendieck fibration and \( B \) is a base of computability in \( \C B \), then \( F \) is a computability transfer with respect to the lifted base \( F^{-1}(B) \). 
\end{cor}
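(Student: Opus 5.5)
We have to verify the two conditions in the definition of a computability transfer for $F \colon (\C E, F^{-1}(B)) \to (\C B, B)$: that $F$ maps the lifted base into $B$, and that $F$ preserves the pullbacks appearing in (\texttt{Base}\textsubscript{2}) for $F^{-1}(B)$.

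The first condition is immediate from the definition in Proposition~\ref{prp: FibLiftBase}. An element $i \in F^{-1}(B)(e)$ is by definition a cartesian lift of some $j \in B\big(F(e)\big)$. A lift of $j$ satisfies $F(i) = j$, hence $F(i) \in B\big(F(e)\big)$. So $F\big(F^{-1}(B)(e)\big) \subseteq B\big(F(e)\big)$.

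For pullback preservation, the plan is to go through the construction of Lemma~\ref{lem: fib}. Take a cospan $f \colon e'' \to e$ and $i \colon e' \to e$ with $i \in F^{-1}(B)(e)$; in particular $i$ is cartesian. Condition (\texttt{Base}\textsubscript{2}) for $B$, applied to $F(i) \in B(F(e))$, gives a pullback of $F(f)$ and $F(i)$ in $\C B$. Lemma~\ref{lem: fib} then builds a pullback in $\C E$ whose image under $F$ is exactly this pullback square in $\C B$, because the legs are a cartesian lift $m$ of $k$ and the induced arrow $n$ with $F(n)=h$.

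The pullback used in (\texttt{Base}\textsubscript{2}) for $F^{-1}(B)$ may, however, be an arbitrary pullback of $f$ and $i$ in $\C E$, and this is the main obstacle. I would handle it as follows. Any such pullback is isomorphic, via a unique mediating isomorphism $\phi$, to the one from Lemma~\ref{lem: fib}. Functors preserve isomorphisms, so $F(\phi)$ is an isomorphism in $\C B$. A square isomorphic to a pullback square is itself a pullback square. Hence $F$ maps the chosen pullback to a pullback square in $\C B$, which completes the argument.

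If the paper intends that the pullback in (\texttt{Base}\textsubscript{2}) is the one produced in the proof of Proposition~\ref{prp: FibLiftBase}, this last step is unnecessary. In that case preservation follows directly from the observation above that Lemma~\ref{lem: fib} maps its square onto the chosen pullback in $\C B$.
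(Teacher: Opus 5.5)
Your proof is correct, but it establishes pullback preservation by a different route from the paper.

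The paper argues directly. It starts from the pullback square in \(\C{E}\) and assumes that both vertical legs, the base arrow \(f\) and its pullback \(g^*f\), are cartesian. It then checks the universal property of the image square in \(\C{B}\) by hand:
\begin{enumerate}
\item take a test cone \((b,h,k)\);
\item lift \(k\) cartesianly to \(\hat k\);
\item use that \(f\) is cartesian to obtain \(\hat h\) lying over \(h\);
\item get \(\ell\) from the pullback in \(\C{E}\);
\item show that the image of \(\ell\) is the unique mediating arrow, where the uniqueness step uses that \(g^*f\) is cartesian.
\end{enumerate}

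You instead use two facts. First, the pullback built in Lemma~\ref{lem: fib} lies exactly over the chosen pullback in \(\C{B}\), which exists by (\texttt{Base}\textsubscript{2}) for \(B\). Second, any other pullback in \(\C{E}\) differs from it by a unique comparison isomorphism, and functors preserve isomorphisms.

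Your route is shorter and reuses the lemma. It handles an arbitrary choice of pullback uniformly. In particular, it never needs the pulled-back leg to be cartesian. The paper's proof simply takes that as part of its hypothesis, and the cleanest justification of it is exactly your argument: that leg is the cartesian arrow \(m\) of Lemma~\ref{lem: fib} composed with an isomorphism.

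The paper's diagram chase has its own advantage: it never presupposes that a pullback exists in \(\C{B}\). It shows outright that the image of any pullback square with both vertical legs cartesian is a pullback.
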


\begin{proof} 
	By the definition of \( F^{-1}(B) \) we have that \( F \) maps monos in \( F^{-1}(B) \) to monos in \( B \). To show the second condition, assume we are given a pullback as it could be used for (\texttt{Base}\textsubscript{2}), that is, both morphisms vertical in the following diagram 
    \[
    \begin{tikzcd}
        f^*(e_1) \ar[r,"f^*g"] \ar[d,"g^*f"'] \pullback & e_2 \ar[d,"f"] 
        \\
        e_1 \ar[r,"g"'] & e_3
    \end{tikzcd}
    \]
    are $p$-cartesian.
    To show that this remains a cartesian square under $p$, assume we are given the following square
    \[
    \begin{tikzcd} 
    b \ar[r,"h"] \ar[d,"k"'] & p(e_2) \ar[d,"p(f)"]
    \\
    p(e_1) \ar[r,"p(g)"'] & p(e_3).
    \end{tikzcd}    
    \]
    We choose a $p$-cartesian lift $\hat k$ of $k$ along $e_1$, and then use that $f$ is $p$-cartesian for $p(f)$ and $e_3$ to obtain an arrow $\hat h$ making the following diagram commutative
    \[
    \begin{tikzcd}
\hat e \ar[drr,to path = {(\tikztostart) -| (\tikztotarget) \tikztonodes}, rounded corners, "\hat h"{near start}]
\ar[ddr, to path = {(\tikztostart) |- (\tikztotarget) \tikztonodes}, rounded corners, "\hat k"'{near start}]
    \\
        &f^*(e_1) \ar[r,"f^*g"] \ar[d,"g^*f"'] \pullback & e_2 \ar[d,"f"] 
        \\
        &e_1 \ar[r,"g"'] & e_3.
    \end{tikzcd}
    \]
    As $f^*(e_1)$ is a pullback, we obtain a unique $\ell$, making the obvious triangles commute. 
    We map $\ell$ into $\C B$ using $p$ in order to get the commutative diagram
    \[
    \begin{tikzcd} 
    b \ar[drr,to path = {(\tikztostart) -| (\tikztotarget) \tikztonodes}, rounded corners,"h"] 
    \ar[ddr, to path = {(\tikztostart) |- (\tikztotarget) \tikztonodes}, rounded corners,"k"'] 
    \ar[dr,"p(\ell)"]
    \\
    &p\big(f^*(e_1)\big) \ar[r,"p(f^*g)"] \ar[d,"p(g^*f)"'] & p(e_2) \ar[d,"p(f)"]
    \\
    &p(e_1) \ar[r,"p(g)"'] & p(e_3).
    \end{tikzcd}    
    \]
    To show that this $p(\ell)$ is the only arrow rendering the above diagram commutative, assume another arrow $m$ satsifies this property as well.
    As $g^*f$ is $p$-cartesian for $e_1$ and $p(g^*f)$, we obtain a unique arrow $n$ with $p(n) = m$ making the following triangle commutative
    \[
    \begin{tikzcd}
        \hat e \ar[r,"n"] \ar[dr,"\hat k"'] & f^*(e_1) \ar[d,"g^*f"]
        \\
        & e_1.
    \end{tikzcd}
    \]
    To see that $f^*g \circ n = \hat h$, observe that $\hat h$ is the unqiue arrow such that $p(\hat h) = h$ and $f \circ \hat h = g \circ \hat k$.
    Thus, since
    \begin{align*}
        f \circ f^*g \circ n = g \circ g^*f \circ n = g \circ \hat k
    \end{align*}
    we get $\hat h = f^*g \circ n$ from uniqueness.
    Thus, $n = \ell$, and hence $m = p(n) = p(\ell)$, showing the required uniqueness of $p(\ell)$.
\end{proof}

%Furthermore, this base is the greatest possible one such that \( F \) becomes a computability transfer.

%\begin{prop} 
%	Let \( p \colon \C E \to \C B \) be a Grothendieck fibration, and \( E \) be a base on \( \C E \) and \( B \) be a base on \( \C B \) such that \( F \) becomes a computability transfer \( p \colon \CB E \to \CB B \).
%	Then \( B_p(e) \supseteq E(e) \) for every \( e \in \C E \).
%\end{prop}
%
%\begin{proof} 
%	Assume that there exists \( i \in E(e) \) for some \( e \) such that \( i \not\in B_F(e) \).
%	Then we know that there exists a \( p\)-cartesian lift \( j \) of \( p(i) \) along \( e \).
%	Thus, as \( p(i) = p(j)	 \), we obtain a unique \( k \) such that \( j \circ k = i \).Now as \( p(k) = 1 \) it is immediate that \( k \) is \( p \)-cartesian as well, because if we are given \( \ell \) with the same codomain, we obtain 
%	\[ \begin{tikzcd}
%		\cdot \ar[dr,"\ell"'] \ar[r,dotted]	&\cdot \ar[d,"k"] \ar[dr,"i"]
%	\\
%	&\cdot \ar[r,"j"] & \cdot 
%	\end{tikzcd} \overset{p}{\mapsto}
%\begin{tikzcd} 
%	\cdot \ar[dr] \ar[d,"p(k)"]
%	\\
%	\cdot \ar[r,"p(i)"'] & \cdot 
%\end{tikzcd}
%\]
%	
%\end{proof}

\begin{defi}
    Let $p \colon \C E \to \C B$ be a functor. 
    An arrow $h \colon e \to e'$ is \emph{weakly} $p$-cartesian for $f \colon b \to b'$ and $e'$, if $p(h) = f$ and for any other $h' \colon e'' \to e'$ with $p(h') = p(h) = f$ there exists a unique $k \colon e'' \to e$ such that the following triangle commutes
    \[
    \begin{tikzcd}
        e'' \ar[dr,"h'"] \ar[d,"k"']
        \\
        e \ar[r,"h"'] & e'.
    \end{tikzcd}
    \]
\end{defi} 

The following lemma is straightforward to show.

\begin{lem}\label{lem: weakcart}
    If $p \colon \C E \to \C B$ is a fibration, then the following are equivalent$:$
    \begin{enumerate}
        \item The arrow $h \colon e \to e'$ is $p$-cartesian for $f \colon b \to b'$ and $e'$.
        \item The arrow $h \colon e \to e'$ is weakly $p$-cartesian for $f \colon b \to b'$ and $e'$.
    \end{enumerate}
\end{lem}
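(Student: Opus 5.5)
The implication from 1 to 2 is immediate: a weakly $p$-cartesian arrow only has to satisfy the universal property for those $h'$ with $p(h') = f$, which is the special case of the $p$-cartesian property where the factorisation in $\C B$ is $1_b$. Concretely, if $h$ is $p$-cartesian, $h' \colon e'' \to e'$ satisfies $p(h') = f$, and we take $1_b \colon p(e'') = b \to b$, then $f \circ 1_b = p(h')$. Cartesianness then gives a unique $k$ with $h \circ k = h'$ and $p(k) = 1_b$. Any $k'$ with $h \circ k' = h'$ must also satisfy $p(k') = 1_b$ if weak cartesianness is read as uniqueness among arrows in the fibre over $b$; that is the reading we adopt.

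For 2 implies 1, we use that $p$ is a fibration. Let $h \colon e \to e'$ be weakly $p$-cartesian for $f$. Choose a $p$-cartesian lift $\bar h \colon \bar e \to e'$ of $f$ at $e'$. Since $p(h) = f = p(\bar h)$, the cartesian property of $\bar h$ gives a unique vertical $u \colon e \to \bar e$ with $\bar h \circ u = h$. The weak cartesian property of $h$, applied to $\bar h$, gives a vertical $v \colon \bar e \to e$ with $h \circ v = \bar h$.

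Next we show that $u$ and $v$ are mutually inverse.
\begin{enumerate}
\item We have $h \circ (v \circ u) = \bar h \circ u = h$, and $1_e$ also satisfies $h \circ 1_e = h$. Uniqueness in the weak property of $h$ then gives $v \circ u = 1_e$.
\item We have $\bar h \circ (u \circ v) = h \circ v = \bar h$, with $p(u \circ v) = 1_b$. Uniqueness in the cartesian property of $\bar h$ then gives $u \circ v = 1_{\bar e}$.
\end{enumerate}
Hence $h = \bar h \circ u$ with $u$ an isomorphism.

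It remains to check that a cartesian arrow composed with an isomorphism is cartesian. An isomorphism is cartesian, since factorisations through it exist and are unique. Cartesian arrows are closed under composition, a fact already invoked in the proof of Proposition~\ref{prp: FibLiftBase}. So $h$ is $p$-cartesian for $f$ and $e'$.

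The main obstacle is the uniqueness clause in the weak definition. As literally stated, it asks for uniqueness among all $k$ with $h \circ k = h'$, not just vertical ones. With that literal reading, step 1 of the second direction is unaffected, and the first direction needs the extra observation that any $k$ with $h \circ k = h'$ is forced to be vertical. This can be argued by factoring $k$ through the cartesian $h$ using $p(k)$, though it holds only when the relevant arrows are vertical. If this fails, we read the uniqueness in the weak definition as uniqueness among vertical arrows, which is the standard notion.
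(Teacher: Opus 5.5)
The paper does not prove this lemma (it is only declared straightforward), and your argument is the standard one. For $1 \Rightarrow 2$ you take the factorisation $1_b$. For $2 \Rightarrow 1$ you compare $h$ with a chosen cartesian lift $\bar h$ and show that the vertical comparison $u$ is invertible. This is correct for the reading in which the $k$ in the definition of a weakly $p$-cartesian arrow must be vertical, $p(k) = 1_b$. That is also the reading the paper itself uses in the proof of Theorem~\ref{thm: PullbackFibLift}: there, uniqueness of $n$ is only tested against $n'$ for which $(m,n')$ is an arrow of $p^{-1}(\C C)$, i.e.\ against vertical $n'$.

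Your final paragraph should be fixed rather than hedged. Under the literal reading (uniqueness among all $k$ with $h \circ k = h'$), the ``extra observation'' you propose is false, and so is the implication $1 \Rightarrow 2$. Factoring through the cartesian $h$ shows precisely the following: for a fixed $h'$ over $f$, the arrows $k$ with $h \circ k = h'$ correspond bijectively, via $k \mapsto p(k)$, to the endomorphisms $w \colon b \to b$ with $f \circ w = f$. So uniqueness among all $k$ holds only if $1_b$ is the only such $w$, for instance when $f$ is mono. For a concrete counterexample, the identity functor on $\Sets$ is a fibration in which every arrow is cartesian. For $f \colon \{0,1\} \to \{\ast\}$ the arrow $h := f$ is cartesian, yet both $1_{\{0,1\}}$ and the constant map at $0$ satisfy $f \circ k = f$. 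So the vertical reading is necessary, not merely conventional.

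For $2 \Rightarrow 1$ the literal reading is harmless, but it affects your step 2 as well as step 1. Under that reading $v$ is not given as vertical. You must first derive $p(v) = p(v) \circ p(u) = p(v \circ u) = 1_b$ from step 1 before applying the uniqueness clause of $\bar h$.
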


\begin{thm}\label{thm: PullbackFibLift}
    If \( p \colon \C E \to \C B\) is a Grothendieck fibration, 
    \( F  \colon \CB C \to \CB B \) is a computability transfer such that the following diagram is a pullback of categories
    %	Then if we are given a pullback square 
	\[ \begin{tikzcd}
		p^{-1}(\C C) \ar[r,"p^*F"] \ar[d,"F^*p"'] \pullback & \C E \ar[d,"p"]
		\\
		\C C \ar[r,"F"'] & \C B,
	\end{tikzcd} \]
	then the following rectangle
	\begin{equation}\begin{tikzcd}
		\big(p^{-1}(\C C), ({F^*p})^{-1}(C)\big) \ar[r,"p^*F"] \ar[d,"F^*p"'] \pullback &\big( \C E,p^{-1}(B)\big)\ar[d,"p"]
		\\
		\CB C \ar[r,"F"'] & \CB B
	\end{tikzcd} \label{PullbackThmSq} 
    \end{equation}
	is a pullback in \( \CatBaseComp \).
\end{thm}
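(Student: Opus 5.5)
The plan is to reduce everything to three facts about the strict pullback $p^{-1}(\C C)$ of categories, and then check the universal property componentwise.

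\emph{Fact (a).} $F^*p$ is again a Grothendieck fibration. An arrow $(u,v)$ of $p^{-1}(\C C)$, with $u$ in $\C C$, $v$ in $\C E$ and $F(u)=p(v)$, is $F^*p$-cartesian iff $v$ is $p$-cartesian. For existence of lifts, given $u\colon c'\to c$ and an object $(c,e)$, take a $p$-cartesian lift $v$ of $F(u)$ at $e$; then $(u,v)$ is cartesian, since the universal property of $v$ transfers verbatim. For the converse, if $(u,v)$ is cartesian, compare it with $(u,\hat v)$ for a $p$-cartesian lift $\hat v$ of $F(u)$. The comparison map is a vertical iso, because both are cartesian over the same arrow, so $v$ is $p$-cartesian. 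By Lemma~\ref{lem: weakcart} it is enough to test weak cartesianness here.

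With (a) in hand, Proposition~\ref{prp: FibLiftBase} makes $(F^*p)^{-1}(C)$ a base, and Corollary~\ref{cor: FibLiftCompTrans} makes $F^*p$ and $p$ computability transfers.

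\emph{Fact (b).} $p^*F$ is a computability transfer. It sends a base mono $(u,v)$ to $v$. By (a), $v$ is $p$-cartesian over $p(v)=F(u)$, and $F(u)\in B$ because $F$ is a transfer; so $v\in p^{-1}(B)$. For the required pullbacks, I will use the explicit construction of Lemma~\ref{lem: fib} applied to $F^*p$. A base pullback in $p^{-1}(\C C)$ is built from a pullback square in $\C C$ together with a cartesian lift. The $\C C$-square is a base pullback, so $F$ preserves it. The $\C E$-component is then precisely the square produced by Lemma~\ref{lem: fib} for $p$ over the image square in $\C B$, hence a pullback in $\C E$. Since pullbacks are unique up to iso, this covers every choice of pullback.

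\emph{Fact (c).} Pullbacks in the strict pullback of categories are computed componentwise. Suppose a square in $p^{-1}(\C C)$ has both components pullbacks, and the images of these components in $\C B$ are pullbacks. Then the square is a pullback: a cone yields unique mediators in $\C C$ and in $\C E$, and these agree in $\C B$ by uniqueness of the mediator there.

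\emph{Universal property.} Let $G\colon(\C D,D)\to\CB C$ and $H\colon(\C D,D)\to(\C E,p^{-1}(B))$ be transfers with $FG=pH$. The $\Cat$-pullback gives a unique functor $K=\langle G,H\rangle$. For $d\in D(x)$, $H(d)$ is $p$-cartesian, so $K(d)$ is $F^*p$-cartesian by (a). It is a lift of $G(d)\in C$, hence $K(d)\in (F^*p)^{-1}(C)$. For pullback preservation, a base pullback in $\C D$ is sent by $G$ and $H$ to pullbacks. These are base pullbacks in $\C C$ and $\C E$ respectively, so $F$ and $p$ preserve them (the latter by Corollary~\ref{cor: FibLiftCompTrans}), and (c) applies. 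Uniqueness of $K$ is inherited from $\Cat$. For the 2-dimensional part, pairs of natural transformations $\gamma\colon G\To G'$, $\eta\colon H\To H'$ with $F\ast\gamma=p\ast\eta$ pair up to a unique transformation $K\To K'$, exactly as in $\Cat$.

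I expect the main obstacle to be Fact (b), namely pullback preservation by $p^*F$. There one must make sure that every base pullback in $p^{-1}(\C C)$, not just the one built in Lemma~\ref{lem: fib}, maps to a pullback in $\C E$. I would handle this by comparing an arbitrary such pullback with the constructed one via the mediating isomorphism.
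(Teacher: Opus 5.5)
Your proposal is correct and follows essentially the paper's route. The paper also first shows that $p^*F$ sends $F^*p$-cartesian lifts of arrows in $C$ to $p$-cartesian lifts; it argues directly via weak cartesianness and Lemma~\ref{lem: weakcart}, whereas you compare with a chosen lift through a vertical iso. It then shows that $\langle G,H\rangle$ sends base arrows to $F^*p$-cartesian arrows, which is exactly the other direction of your Fact~(a); your Facts~(a)--(c) additionally make explicit what the paper leaves implicit or calls straightforward, namely that $F^*p$ is a fibration (so $(F^*p)^{-1}(C)$ is a base), that $p^*F$ and $\langle G,H\rangle$ preserve the (\texttt{Base}\textsubscript{2}) pullbacks, and the 2-dimensional part.
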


\begin{proof} 
	%We show the universal property of the pullback.
    We first show that the rectangle \eqref{PullbackThmSq} is in $\CatBaseComp$.
    For this, we show that $p^*F$ maps $F^*p$-cartesian lifts of arrows in $C$ to $p$-cartesian lifts of arrows in $B$.
    To aid readability, we work with the canonical description of the pullback in $\Cat$ with objects pairs $(c,e)$ such that $F(c) = p(e)$, and arrows pairs $(f,g)$, such that $F(f) = p(g)$.
    Assume $(f,g) \colon (c,e) \to (c',e')$ is $F^*p$-cartesian for $f \colon c \to c'$ and $(c',e')$, where $f \in C(c')$.
    By Lemma~\ref{lem: weakcart} it suffices to show that $p^*F(f,g) = g$ is weakly $p$-cartesian for $F(i)$ and $p^*F(c',e') = e'$.
    Let $h \colon e'' \to e'$ be another arrow with $p(h) = F(i)$.
    Forming the arrow $(f,h) \colon (c,e'') \to (c',e')$ then yields a unique arrow $(m,n) \colon (c,e'') \to (c,e)$---as $F^*p(f,h) = f$ and $(f,g)$ is $F^*p$-cartesian for $f$ and $(c',e')$---making the following triangle in $p^{-1}(\C C)$ commutative
    \[ \begin{tikzcd}
    (c,e'') \ar[d,"{(m,n)}"'] \ar[dr,"{(f,h)}"]
    \\
    (c,e) \ar[r,"{(f,g)}"'] & (c',e').
    \end{tikzcd}\]
    Thus, the following triangle in $\C E$ is commutative
      \[ \begin{tikzcd}
    e'' \ar[d,"n"'] \ar[dr,"h"]
    \\
    e \ar[r,"g"'] & e'.
    \end{tikzcd}\]   
    To see that $n$ is unique, observe that any other $n'$ making the above triangle commutative would necessarily make the following triangle
     \[ \begin{tikzcd}
    (c,e'') \ar[d,"{(m,n')}"'] \ar[dr,"{(f,h)}"]
    \\
    (c,e) \ar[r,"{(f,g)}"'] & (c',e')
    \end{tikzcd}\]
    commutative, which contradicts the uniqueness of $(m,n)$.
    That $p^*F$ preserves the pullbacks used in (\texttt{Base}\textsubscript{2}) is shown with a straightforward calculation.
    
	If the following diagram commutes
    %we are given another commutative square
	\[ \begin{tikzcd}
		\CB G \ar[r,"H"] \ar[d,"G"'] &\CB E \ar[d,"p"]
		\\
		\CB C \ar[r,"F"'] & \CB B,
	\end{tikzcd} \]
	then we obtain---in \( \Cat \)---a unique functor \( \langle G,H \rangle \colon \C G \to p^{-1}(\C C) \) making the following diagrams
	\[ \begin{tikzcd}
	& \C G \ar[dr,"H"] \ar[dl,"G"'] \ar[d,"{\langle G,H \rangle}"{description}]
	\\
		\C C & p^{-1}(\C C) \ar[r,"p^*F"'] \ar[l,"F^*p"] & \C E
	\end{tikzcd} \]
	commutative.
	We show that \( \langle G,H\rangle \) is a computability transfer.
	For this, let \( i \in G(x) \) be given. 
	It suffices to show that \( \langle G,H \rangle(i) \) is \( F^*p \)-cartesian for \( G(i) \) and \(\langle G,H \rangle(x)\).
	If \( \ell  \) has codomain \( \langle G,H \rangle(x) \) and \( k \) is an arrow, such that \( G(i) \circ k = F^*p(\ell) \), we can apply \( F \) to obtain the same triangle in \( \C B \), that is, we have
	\[
		\begin{tikzcd} 
			\textminus \ar[dr,"\ell"]
			\\
		\textminus \ar[r,"{\langle G,H \rangle}i"'] &{\langle G,H \rangle}(x) 
		\end{tikzcd}
		\hbox{ and }
		\begin{tikzcd} 
			\textminus \ar[dr,"F^*p(\ell)"]
			\ar[d,"k"']
			\\ 
			\textminus \ar[r,"G(i)"'] & G(x)
		\end{tikzcd}
		\hbox{ and }
		\begin{tikzcd} 
			\textminus \ar[dr,"{(F \circ F^*p)(\ell)}"]
			\ar[d,"F(k)"']
			\\
			\textminus \ar[r,"{(F \circ G)(i)}"'] & F\big(G(x)\big)
		\end{tikzcd}
	\]
	in \( p^{-1}(\C C), \C C \) and \( \C B \), respectively. 
	Since \( p \) is a fibration, we can lift the last triangle to a triangle
	\[ \begin{tikzcd}
		\textminus \ar[dr,"p^*F(\ell)"]
		\ar[d,"f"']
		\\
		\textminus \ar[r,"H(i)"']& H(x),
	\end{tikzcd} \]
	since \( H \) is a computability transfer and, thus, \( H(i) \) is cartesian for \( F(G(i)) = p(H(i)) \).
	Next, we observe that \( \langle G,H\rangle i \) is the only arrow in \( p^{-1}(\C C) \) that is mapped to \( H(i) \) by \( p^*F \) and to \( G(i) \) by \( F^*p \). Otherwise, there would be no  unique fill for 
	\[ \begin{tikzcd}
	& \Delta(1)  \ar[dr,"\mapsto H(i)"] \ar[dl,"\mapsto G(i)"'] \ar[d,"?", dotted, description]
	\\
		\C C & p^{-1}(\C C) \ar[r,"p^*F"'] \ar[l,"F^*p"] & \C E.
	\end{tikzcd} \]
    Similarly, 
	%A similar argument shows that
    \( \ell \) is uniquely determined by its images under \( p^*F\) and \(F^*p \).
	If we consider \( \Delta(2) \) and the arrows 
	\[ 
		\begin{tikzcd} 
			\textminus \ar[dr,"F^*p(\ell)"]
			\ar[d,"k"']
			\\ 
			\textminus \ar[r,"G(i)"'] & G(x)
		\end{tikzcd}
		\mapsfrom 
		\begin{tikzcd} 
			0 \ar[d] \ar[dr] 
			\\
			1 \ar[r] & 2
		\end{tikzcd}
		\mapsto 
		\begin{tikzcd}
			\textminus \ar[dr,"p^*F(\ell)"]
			\ar[d,"f"']
			\\
			\textminus \ar[r,"H(i)"']& H(x),
		\end{tikzcd}
	\]
	 then from the pullback property we get the required commutative triangle
	 	\[
			\begin{tikzcd} 
				\textminus \ar[dr,"\ell"] \ar[d,"g"]
			\\
			\textminus \ar[r,"{\langle G,H \rangle}(i)"'] &{\langle G,H \rangle}(x) 
		\end{tikzcd}
	\]
	(where \( g \) is unique from the uniqueness of the pullback), showing that \( \langle G,H \rangle i \) is indeed \( F^*p \)-cartesian for \( G(i) \).
\end{proof}

\section{Transporting bases of computability}\label{sec: transport}
In this section we define various new bases of computability from given ones. Namely, we show that a pullback-preserving fibration $F \colon \mathscr{E \to B}$ maps a base of computability $E$ in $\mathscr{E}$ to a base of computability $F(E)$ in $\mathscr{B}$. Moreover, we describe the base of computability in the comma-category $F/G$ and the exponential in $\CatBaseComp$.

\begin{prop}\label{prp: basetobase}
Let $\mathscr{E}$ be a category with base of computability $E$. 
If $F \colon \mathscr{E \to B}$ is a pullback-preserving fibration, then  $F(E)$ is a base of computability in $\C{B}$, where
\[ F(E)(d) = \{F(i) ~\vert~ i \in E(c) \wedge F(c) = d\} \cup \{ 1_d\}, \]
and $F$ is a computability transfer $\CB E \to (\C C,F(E))$.
\end{prop}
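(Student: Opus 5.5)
The plan is to check the two axioms of Definition~\ref{def: base} for $F(E)$ directly, and then read off that $F$ is a computability transfer. Throughout, the only tools are that $F$ preserves pullbacks and that $F$ is a Grothendieck fibration, so that arrows of $\C B$ have cartesian lifts with prescribed codomain. I read the codomain of the transfer in the statement as $(\C B, F(E))$.

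\emph{Step 1: membership in $\Mon$.} First I check that every element of $F(E)(d)$ is a mono with codomain $d$. For $1_d$ this is clear. For $F(i)$ with $i \in E(c)$, I use that $i$ is mono iff its kernel-pair square (with $1_{\dom(i)}$ on both legs) is a pullback. Since $F$ preserves pullbacks, $F$ sends this square to a pullback, so $F(i)$ is mono. (Base$_1$) holds because $1_d$ was added by hand.

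\emph{Step 2: (Base$_2$).} I split into cases according to whether the two given base elements are the adjoined identities or images $F(i), F(j)$.
\begin{itemize}
\item If the element being pulled back is $1_b$, take the trivial pullback square with identity verticals; the composite is then the other given element, which lies in $F(E)(a)$.
\item In the main case, $F(j)$ is pulled back, with $j \colon t \to c'$ in $E(c')$ and $F(c') = b$, along some $f \colon F(s) \to b$. The other element is $F(i)$ with $i \colon s \to c$, or $1_a$ with $a = F(s)$.
\end{itemize}
For the main case I lift $f$ to a cartesian arrow $\hat f \colon \hat s \to c'$ with $F(\hat f) = f$. The aim is to realise the required pullback as the image under $F$ of a pullback in $\C E$ that (Base$_2$) for $E$ provides, and then use pullback preservation. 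The composite would then be $F$ of the corresponding composite in $E$, hence in $F(E)$.

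\emph{Step 3: the main obstacle.} (Base$_2$) in $E$ needs an arrow out of $s$ itself, the domain of $i$, whereas the cartesian lift lands in some $\hat s$ lying over the same object $F(s)$. My first attempt is to use cartesianness of $\hat f$ to compare $s$ and $\hat s$ inside the fibre over $F(s)$, obtaining an arrow $s \to c'$ over $f$. Failing that, I would instead reindex $i$ along the fibre so that the cospan lives in the image of a (Base$_2$) square, and invoke Lemma~\ref{lem: fib} to produce the pullback in $\C E$. I expect producing this arrow $s \to c'$ lying over $f$ to be the delicate point of the whole proof. In the subcase where the first element is $1_a$, pullback existence is the same question, and the composite is just $F$ of the pulled-back arrow of $j$. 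That arrow lies in $E$ by (Base$_2$) of $E$ applied with $1_s$.

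\emph{Step 4: $F$ is a transfer.} By definition $F(E(c)) \subseteq F(E)(F(c))$. Preservation of the pullbacks from (Base$_2$) is immediate, since $F$ preserves all pullbacks.
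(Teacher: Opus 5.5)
Steps 1, 2 and 4 are fine, and you have located exactly the right difficulty. But Step~3 is a genuine gap, and neither of your remedies closes it.

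Existence of the pullback is actually harmless. Apply (\texttt{Base}\textsubscript{2}) in $\mathscr{E}$ to $1_{\hat s}$, $j$ and $\hat f$, and push the result forward with $F$; this also settles your subcase with $1_a$. What is missing is membership of the composite, and that needs an arrow from the \emph{given} $s$ to $c'$ lying over $f$.

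Your first remedy is circular. Cartesianness of $\hat f$ only yields arrows into $\hat s$ from objects that already carry an arrow into $c'$ over a map factoring through $f$, so it cannot produce $s\to\hat s$ without presupposing the arrow you want. Your second remedy fails for a different reason: reindexing $i$, or invoking Lemma~\ref{lem: fib}, produces pullbacks in $\mathscr{E}$ of arrows that need not belong to $E$.

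In fact no argument can succeed, because the statement is false as written. Here is a counterexample.
\begin{itemize}
\item Let $\mathscr{B}$ be the poset $\{0,1\}^2$ with elements $\bot < x, y < \top$, let $\mathscr{E} = \mathscr{B}\sqcup\mathscr{B}$, and let $F$ be the codiagonal. It is a discrete fibration and preserves all pullbacks, since a pullback square in a coproduct of categories lies in a single summand and is a pullback there.
\item On the first copy take the base $E_0$ with $E_0(\top)=\{1_\top, x\le\top\}$, $E_0(y)=\{1_y,\bot\le y\}$, and only identities elsewhere.
\item On the second copy take the symmetric base $E_1$ with $E_1(\top)=\{1_\top, y\le\top\}$ and $E_1(x)=\{1_x,\bot\le x\}$.
\item A direct check shows both satisfy (\texttt{Base}\textsubscript{2}); they are even replete.
\end{itemize}
Now $F(E)(\top)=\{1_\top, x\le\top, y\le\top\}$. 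Apply (\texttt{Base}\textsubscript{2}) for $F(E)$ to $x\le\top$, $y\le\top$ and $f=(x\le\top)$. The unique pullback of $y\le\top$ along $f$ is $\bot\le x$, but $(x\le\top)\circ(\bot\le x)=(\bot\le\top)\notin F(E)(\top)$. In your notation, the only lifts in $E$ give $s=(x,0)$ and $c'=(\top,1)$. These lie in different summands, so there is no arrow $s\to c'$ at all.

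The paper's own proof has the same hole. After choosing the cartesian lift $f'\colon c_2\to c_3$, it simply asserts that $i=F(h)$ for some $h\colon c_2\to c_1$ in $E$. That is, it assumes a lift of $i$ in $E$ can be taken with domain $\dom(f')$, which is exactly what fails above.

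Your outline does become a correct proof under a hypothesis that supplies the missing arrow. One such hypothesis is that every fibre of $F$ has at most one object, since then $s=\hat s$. More generally, it suffices that for the given lifts $i\colon s\to c$ and $j\colon t\to c'$ in $E$, some arrow $g\colon s\to c'$ lies over $f$. Then (\texttt{Base}\textsubscript{2}) in $E$ for $i$, $j$, $g$, pushed forward by $F$, gives what is needed.
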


\begin{proof}
Condition (\texttt{Base}\textsubscript{1}) follows immediately, since $1_e  \in F(B)(e)$, for every $e \in \C E$.
To show condition (\texttt{Base}\textsubscript{2}),
let the following arrows $i,j,f$ in $\C{B}$
\[ \begin{tikzcd}
&&d_4 \ar[d,"j",hook]
\\
d_1 & d_2 \ar[l,"i",hook] \ar[r,"f"'] & d_3,
\end{tikzcd} \]
%be given 
where $i \in F(B)(d_1)$ and $j \in F(B)(d_3)$.
We distinguish two cases. If 
%\begin{itemize}[leftmargin = 1.5em]
	%\item 
$j = {1}_{d_3}$, then $f^* 1_{d_3} = 1_{d_3}$, and thus, 
	\( i \circ f^* 1_{d_3} = i \circ 1_{d_3} = i \in F(B)(d_1) \)	by definition of $i$.
If $j = F(k)$, for some $k \colon c_4 \to c_3$, where $k \in B(c_4)$, we obtain an $F$-cartesian lift $f'\colon c_2 \to c_3$ of $f$, since $F(c_3) = d_3$. Let the following pullback
%	We can thus compute the pullback 
	\[ \begin{tikzcd}
	k^*(c_2) \ar[r,"k^*f'"] \ar[d,"{f'}^*k"'] & c_4 \ar[d,"k"] 
	\\
	c_2 \ar[r,"f'"'] & c_3.
	\end{tikzcd} \]
Since $F$ is pullback-preserving, we get $F(k^*(c_2)) \cong j^*(d_2)$. Furthermore, as $F(c_2) = d_2$, we have, from the definition of the base,  that either $i$ is always given as $h \colon c_2 \to c_1$, such that $F(h) = i$, since in the case that $i$ is the identity we simply have that $h = 1_{c_2}$. 
Hence, $h \circ {f'}^*k \in B(c_1)$, and thus, we get the equality $F(h \circ {f'}^*k) = F(h) \circ F( {f'}^*k) \in F(E)(d_1)$, as required.
It is immediate to show that $F$ is a computability transfer.
\end{proof}
%\begin{rem}[]
%	Observe that we did not need that the cartesian square transported by \( F \) was still a cartesian square, so our fibration \( F \) need not be pullback-preserving!
%\end{rem}
\begin{lem}\label{lem: fibr1}
If $F\colon \C{E \to B}, G \colon \C{Z \to E}$ are pullback-preserving fibrations, and if $Z$ is a base of computability in $\C Z$, then the following hold:\\[1mm]
\normalfont 
(i) \itshape $1_{\C Z}(Z) = Z$.\\[1mm]
\normalfont 
(ii) \itshape $({G \circ F})(Z) = G\big(F(Z)\big)$.
\end{lem}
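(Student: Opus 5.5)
Both parts are proved by unfolding the definition of the transported base from Proposition~\ref{prp: basetobase},
\[
 F(E)(d) = \{F(i) ~\vert~ i \in E(c) \wedge F(c) = d\} \cup \{1_d\},
\]
and comparing the two sides as families of classes of monos, object by object. Throughout we read the composite in (ii) in the only way that type-checks: $G \colon \C Z \to \C E$ is applied first and $F \colon \C E \to \C B$ second. So $(G \circ F)(Z)$ denotes the base transported along the composite functor $\C Z \to \C B$, and $G\big(F(Z)\big)$ denotes transport along $G$ first and then along $F$. Note that the composite of two pullback-preserving fibrations is again a pullback-preserving fibration, since cartesian arrows compose and pullbacks are preserved twice. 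Hence the left-hand side of (ii) is itself a base by Proposition~\ref{prp: basetobase}, and likewise each single step on the right-hand side.

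For (i), take $z \in \C Z$. By definition, $1_{\C Z}(Z)(z)$ consists of all arrows $1_{\C Z}(i) = i$ with $i \in Z(c)$ and $1_{\C Z}(c) = c = z$, together with $1_z$. This is $Z(z) \cup \{1_z\}$. By (\texttt{Base}\textsubscript{1}) we have $1_z \in Z(z)$, so this union is exactly $Z(z)$.

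For (ii), fix an object $d$ of $\C B$ and show both inclusions. Take an element of the left side. Either it is $1_d$, which lies on the right side because $1_d$ is always adjoined. Or it is $F(G(i))$ with $i \in Z(c)$ and $F(G(c)) = d$. In the second case $G(i) \in G(Z)(G(c))$, and then $F(G(i)) \in F(G(Z))(d)$ because $F(G(c)) = d$.

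Conversely, take an element of $F\big(G(Z)\big)(d)$. Either it is $1_d$, which lies on the left side. Or it is $F(k)$ with $k \in G(Z)(e)$ and $F(e) = d$. Here we split on how $k$ entered $G(Z)(e)$. If $k = G(i)$ with $i \in Z(c)$ and $G(c) = e$, then $F(k) = (F\circ G)(i)$ with $(F \circ G)(c) = d$, so it lies on the left side. If instead $k = 1_e$ is the adjoined identity, then $F(k) = F(1_e) = 1_{F(e)} = 1_d$, which again lies on the left side.

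The main obstacle is exactly this last case: the identities artificially adjoined at the intermediate stage must not produce anything new after pushing forward along $F$. Functoriality of $F$ resolves it, because $F(1_e) = 1_d$ is already adjoined in the outer transport. Once this is seen, the remainder is pure bookkeeping. The only other care needed is fixing the order-of-composition convention at the start.
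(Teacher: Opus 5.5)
Your proof is correct and takes the same route as the paper: unfold the definition of the transported base from Proposition~\ref{prp: basetobase}, then verify both inclusions object by object, splitting on whether an element is an adjoined identity or the image of a base arrow. Your version is more careful than the paper's, which calls (i) immediate and the converse inclusion of (ii) ``similar''. You make explicit the use of (\texttt{Base}\textsubscript{1}) in (i), the case $k = 1_e$ handled by functoriality of $F$, and the order of composition that actually type-checks.
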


\begin{proof}
Case (i) follows immediately. For the proof of case (ii),
we observe that if $i \in ({G \circ F})(Z)(e)$, for some $e \in \C E$, then, either $i = 1_a$, and thus, $i \in (G\big(F(Z)\big)(e)$, or $i = G(F(k))$, for some $k \in B(c)$, such that $G\big(F(c)\big) = e$. Hence, $F(k) \in F(Z)\big(F(c)\big)$, and thus, $G\big(F(k)\big) = i \in (G \circ F)(Z)(e)$. 
The converse inclusion is shown similarly.
\end{proof}

\begin{defi}\label{def: comma}
    If $\C{A,B,C}$ are categories and $F \colon \C{A \to B}, G \colon \C{C \to B}$ are functors, then the \emph{comma-category} $F/G$ has object triplets $(a,h,c)$ of objects $a \in \C A, c \in \C C$ and an arrow $h \colon F(a) \to G(c)$. Its arrows
are pairs $(f,g) \colon (a,h,c) \to (a',h',c')$ of arrows $f \colon F(a) \to F(a')$ and $ g \colon G(c) \to G(c')$, such that the following rectangle commutes
        \[ \begin{tikzcd}
            F(a) \ar[r,"f"] \ar[d,"h"'] & F(a') \ar[d,"h'"]
            \\
            G(c) \ar[r,"g"'] & G(c').
        \end{tikzcd}\]
        Moreover, $(i,j) \circ (f,g) = (i \circ f, j \circ g)$.
\end{defi}

\begin{lem}\label{lem: commapullbacks}
     Let $\C{A,B,C}$ be categories. If the pullbacks of the following cospans exist,
     \[
     \begin{tikzcd}
         a_1 \ar[r,"f"] & a_2 & a_3 \ar[l,"i"']
     \end{tikzcd}
     \quad\hbox{and}\quad
     \begin{tikzcd}
         c_1 \ar[r,"g"] & c_2 & c_3, \ar[l,"j"']
     \end{tikzcd}
     \]
     and $F \colon \C{A \to B}, G \colon \C{C \to B}$ preserve these pullbacks, then a pullback of 
     \begin{equation} \begin{tikzcd}
        (a_1,h_1,c_1) \ar[r,"{(f,g)}"] & (a_2,h_2,c_3) & (a_3,h_3,c_3). \ar[l,"{(i,j)}"'] 
    \end{tikzcd}\label{CommaPullbackEq::1} \end{equation}
    exists in $F/G$.
\end{lem}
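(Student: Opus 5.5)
The plan is to build the pullback componentwise. The comma category is itself a kind of limit, and the hypotheses give us pullbacks in each factor that $F$ and $G$ preserve. The definition of $F/G$ in the paper is a little loose: arrows are written with components $f \colon F(a)\to F(a')$, whereas the statement uses $f \colon a_1 \to a_2$ in $\C A$. I will read an arrow $(f,g)\colon (a,h,c)\to(a',h',c')$ as a pair $f\colon a\to a'$ in $\C A$ and $g\colon c\to c'$ in $\C C$ satisfying $h'\circ F(f) = G(g)\circ h$. I will also read the middle object of \eqref{CommaPullbackEq::1} as $(a_2,h_2,c_2)$.

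First I fix pullbacks in $\C A$ and $\C C$:
\[ a_4 := i^*(a_1),\ p_1\colon a_4\to a_1,\ p_3\colon a_4\to a_3, \qquad c_4 := j^*(c_1),\ q_1\colon c_4\to c_1,\ q_3\colon c_4\to c_3. \]
By hypothesis, $F(a_4)$ with $F(p_1),F(p_3)$ is a pullback of $F(f),F(i)$, and $G(c_4)$ with $G(q_1),G(q_3)$ is a pullback of $G(g),G(j)$.

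Next I construct the structure arrow $h_4\colon F(a_4)\to G(c_4)$. The two candidate legs into $G(c_1)$ and $G(c_3)$ are $h_1\circ F(p_1)$ and $h_3\circ F(p_3)$. To see that they form a cone over $G(g),G(j)$, compute
\[ G(g)\circ h_1\circ F(p_1) = h_2\circ F(f)\circ F(p_1) = h_2\circ F(i)\circ F(p_3) = G(j)\circ h_3\circ F(p_3), \]
using that $(f,g)$ and $(i,j)$ are arrows of $F/G$ and that the square in $\C A$ commutes. The universal property of the pullback $G(c_4)$ then gives a unique $h_4$ with $G(q_1)\circ h_4 = h_1\circ F(p_1)$ and $G(q_3)\circ h_4 = h_3\circ F(p_3)$. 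These two equations say exactly that $(p_1,q_1)\colon (a_4,h_4,c_4)\to (a_1,h_1,c_1)$ and $(p_3,q_3)\colon (a_4,h_4,c_4)\to(a_3,h_3,c_3)$ are arrows of $F/G$. The square they form with $(f,g)$ and $(i,j)$ commutes componentwise.

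For the universal property, take a cone $(u_1,v_1)\colon (a,h,c)\to(a_1,h_1,c_1)$ and $(u_3,v_3)\colon(a,h,c)\to(a_3,h_3,c_3)$ over the cospan. The pullbacks in $\C A$ and $\C C$ give unique $u\colon a\to a_4$ and $v\colon c\to c_4$ commuting with the projections. This step is the main obstacle: I must check that $(u,v)$ is an arrow of $F/G$, that is, $h_4\circ F(u) = G(v)\circ h$. Here preservation by $G$ is used a second time. Both sides are arrows into the pullback $G(c_4)$, so it suffices to compare them after composing with $G(q_1)$ and $G(q_3)$. For $q_1$:
\[ G(q_1)\circ h_4\circ F(u) = h_1\circ F(p_1)\circ F(u) = h_1\circ F(u_1) = G(v_1)\circ h = G(q_1)\circ G(v)\circ h, \]
and the computation for $q_3$ is symmetric. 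Uniqueness of $(u,v)$ follows from uniqueness in each component, since composition in $F/G$ is componentwise. Strictly, only preservation by $G$ was needed, while preservation by $F$ is harmless; I would remark on this but keep the statement as given.
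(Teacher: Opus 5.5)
Your proposal is correct and follows essentially the same route as the paper's proof. Both take componentwise pullbacks in $\C A$ and $\C C$, obtain the structure arrow from the universal property of the preserved pullback $G(j^*(c_1))$, and verify the comma condition for the mediating arrow by uniqueness of arrows into that same pullback; the paper phrases this last step as two candidates satisfying the same equations, which is your joint-monicity check. Your readings of the typos (middle object $(a_2,h_2,c_2)$, arrows as pairs of arrows of $\C A$ and $\C C$) are the intended ones. Your remark that only preservation by $G$ is actually used holds for the paper's argument as well.
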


\begin{proof}
\newcommand{\ahc}[1]{(a_#1,h_#1,c_#1)}
    The cospan~\eqref{CommaPullbackEq::1} is induced by the cospans in $\C A, \C C$, which are transported to $\C B$ as follows: 
    \[ \begin{tikzcd}
        a_1 \ar[d,"f"'] \ar[r,""{name = V1}, phantom] & F(a_1)\ar[d,"F(f)"'] \ar[r,"h_1"] & G(c_1)\ar[d,"G(i)"] & c_1 \ar[d,"i"] 
        \ar[l,""{name = U1}, phantom]
        \\
        a_2  \ar[r,""{name = V2}, phantom] & F(a_2) \ar[r,"h_2"] & G(c_2) & c_2 
        \ar[l,""{name = U2}, phantom]
        \\
        a_3 \ar[u,"g"]  \ar[r,""'{name = V3}, phantom] & F(a_3) \ar[r,"h_3"']\ar[u,"F(g)"]  & G(c_3)\ar[u,"G(j)"'] & c_3 \ar[u,"j"'] \ar[l,""'{name = U3}, phantom] 
        \\
        \mathscr{A}& \phantom{0} \ar[r,"\mathscr{B}"{font = \normalsize},phantom] &\phantom{0} & \mathscr{C}.
        \arrow[from = V1, to = V3, end anchor = {[yshift = -1.25cm]},  no head, dashed] 
        \arrow[from = U1, to = U3, end anchor = {[yshift = -1.25cm]}, no head, dashed]
    \end{tikzcd}\]
    If we take the pullbacks of both the cospan in $\C A$ and in $\C C$, then these become the pullbacks of the two cospans in $\C B$, since both $F$ and $G$ preserve those pullbacks. This yields
    \begin{equation}\begin{tikzcd}
        & F(a_1)\ar[d,"F(f)"'] \ar[r,"h_1"] & G(c_1)\ar[d,"G(i)"] &
        \\
       F(i^*(a_1)) \ar[ur,"F(g^*f)"] \ar[dr,"F(f^*g)"'] & F(a_2) \ar[r,"h_2"] & G(c_2) & G(j^*(c_1)),\ar[ul,"G(i^*j)"'] \ar[dl,"G(j^*i)"]
        \\
       & F(a_3) \ar[r,"h_3"']\ar[u,"F(g)"]  & G(c_3)\ar[u,"G(j)"'] & 
    \end{tikzcd} \label{CommaPullbackEq::2}\end{equation}
    where all squares commute. 
    This in turn entails
    \begin{align*}
        G(i) \circ h_1 \circ F(f^*g) &= h_2 \circ F(f) \circ F(F^*g) = h_2 \circ F(g) \circ F(g^*f)
        \\
        &= G(j) \circ h_3 \circ F(g^*f). 
    \end{align*}
    From the universal property of pullbacks there is $\tilde h \colon F(i^*(a_1)) \to G(j^*(c_1))$, such that 
    \begin{equation} G(j^*i) \circ \tilde{h} = h_3 \circ F(f^*g) \quad\hbox{and}\quad G(i^*j) \circ \tilde h = h_1 \circ F(g^*f).\label{CommaPullbackEq::3} \end{equation}
    Hence, 
    \((f^*g, j^*i) \colon (i^*(a_1), \tilde h, j^*(c_1))\to \ahc{3}\)
    and \((g^*f, i^*j) \colon \ahc{1} \to (i^*(a_1), \tilde h, j^*(c_1))\) are arrows in $F/G$.
    It remains to show that $(i^*(a_1) , \tilde h , j^*(c_1))$ satisfies the universal property of a pullback.
    Let $(a',h',c') \in F/G$, arrows $(n_1,m_1) \colon (a',h',c') \to \ahc{1}$ and $(n_3,m_3) \colon (a',h',c') \to \ahc{3}$, such that the following rectangle commutes
   \[\begin{tikzcd}
        (a',h',c') \ar[r,"{(n_3,m_3)}"] \ar[d,"{(n_1,m_1)}"'] & \ahc{3} \ar[d,"{(i,j)}"] 
        \\
        \ahc{1} \ar[r,"{(f,g)}"'] & \ahc{2}.
    \end{tikzcd} \]
   This in turn decomposes to the respective commutative squares in $\C A, \C C$, hence, we can use the pullback properties of $i^*(a_1)$ and $j^*(c_1)$ to obtain arrows $n',m'$ making the following diagrams commutative
    \[ 
    \begin{tikzcd}
        &a' \ar[dr,"n_3"]  \ar[d,"n'"] \ar[dl,"n_1"'] 
        \\
        a_1 & i^*(a_1)\ar[l,"f^*g"] \ar[r,"g^*f"']& a_3
    \end{tikzcd}
    \quad
    \begin{tikzcd}
        & c' \ar[dr,"m_3"] \ar[d,"m'"] \ar[dl,"m_1"']
        \\
        c_1 & j^*(c_1)  \ar[l,"i^*j"] \ar[r,"j^*i"'] & c_3.
    \end{tikzcd}\]
   % commute.
    It remains to show that the following rectangle commutes 
    \[ \begin{tikzcd}
        F(a') \ar[d,"F(n')"'] \ar[r,"h'"] & G(c') \ar[d,"G(m')"]
        \\
        F(i^*(a_1) \ar[r,"\tilde h"'] & G(j^*(c_1).
    \end{tikzcd}\]
        We pack all arrows considered so far into the following  diagram
    \[ \begin{tikzcd}
        F(a') \ar[dr,"h'"] \ar[ddrrr,"F(n_3)", bend left = 20] \ar[ddddr, bend right = 20, "F(n_1)"']
        \ar[ddr,"F(n')" description]
        \\
        &G(c')  \ar[ddddr, bend right = 30, "G(m_1)"{description}]\ar[ddrrr, "G(m_3)"{very near start}, bend left = 30, crossing over] 
        \\
        & F(i^*(a_1)) \ar[rr,"F(g^*f)"] \ar[dd,"F(f^*g)"'] && F(a_3) \ar[dd, "F(g)"{very near end}] \ar[dr,"h_3"']
        \\
        &&G(j^*(c_1)) \ar[from = ul,"\tilde h"{description}, crossing over]  \ar[from = uul, crossing over,"G(m')"{near start}] \ar[rr,"G(g^*f)"{near start},crossing over] && G(c_3) \ar[dd,"G(g)"]
        \\
        & F(a_1) \ar[dr,"h_1"'] \ar[rr,"F(f)"{very near end}] && F(a_2) \ar[dr,"h_2"]
        \\
        &&G(c_1)\ar[from = uu,"G(f^*g)",crossing over, near end] \ar[rr,"G(f)"] && G(c_2).
    \end{tikzcd}\]
    In the preceding diagram, all rectangles commute (except the one we want to prove), as well as the triangles of the pullbacks. From that we get
   % This allows us to compute that 
    \begin{align*}
        G(f) \circ G(m_1) \circ h' &= G (f) \circ G(f^*g) \circ G(m') \circ h' = G(g) \circ G(g^*f) \circ G(m')  \circ h'
        \\
        &= G(g) \circ G(m_3) \circ h'.
    \end{align*}
    Thus, by the pullback property there is an arrow $\check h \colon F(a') \to G(j^*(c_1))$, such that $G(g^*f) \circ \check h = G(m_3) \circ h$ and $G(f^*g) \circ \check h = G(m_1) \circ h'$. 
    Clearly, both $\check h = G(m') \circ h'$ and $\check h = \tilde h \circ F(n')$ satisfy these conditions. For $G(m') \circ h'$, this is immediate, and for $\tilde h \circ F(n')$, we have that 
    \begin{align*}
        G(g^*f) \circ \tilde h \circ F(n') &= h_3 \circ F(g^*f) \circ F(n') = h_3 \circ F(n_3) 
        = G(m_3) \circ h'
    \end{align*}
    and 
    \begin{align*}
        G(f^*g) \circ \tilde h \circ F(n') &= h_1 \circ F(f^*g) \circ F(n') = h_1 \circ F(n_1) 
        = G(m_1) \circ h'.
    \end{align*}
    By uniqueness, we get the required equality.
\end{proof}

\begin{prop}\label{prp: commabase}
    Let $F \colon \C{A \to B}, G \colon \C{C \to B}$ be pullback-preser\-{}ving functors. 
    Let $A,B,C$ be bases of computability in $\C{A,B,C}$, respectively. 
    Then $B_/$ is a base of computability in $F / G$, where 
    \[ B_/(a,h,c) = \big\{ (i,j) ~\vert~ i \in A(a) \,\&\,F(i) \in B(F(a)) \,\&\, j \in C(c) \,\&\, G(j) \in B(F(c))\big\}. \]
\end{prop}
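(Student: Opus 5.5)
We verify the two axioms of Definition~\ref{def: base} for $B_/$ directly, reducing everything to the corresponding properties of $A$, $B$ and $C$ and to Lemma~\ref{lem: commapullbacks}. Throughout we read an arrow $(i,j)\colon (a',h',c')\to(a,h,c)$ of $F/G$ as a pair of arrows $i$ in $\C A$, $j$ in $\C C$, as in the proof of Lemma~\ref{lem: commapullbacks}. First we check that every $(i,j)\in B_/(a,h,c)$ is a mono in $F/G$. If $(i,j)\circ(u,v)=(i,j)\circ(u',v')$, then $i\circ u=i\circ u'$ and $j\circ v=j\circ v'$ componentwise, so $u=u'$ and $v=v'$ because $i\in A(a)$ and $j\in C(c)$ are monos. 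For (\texttt{Base}\textsubscript{1}), note that $(1_a,1_c)$ is the identity of $(a,h,c)$. We have $1_a\in A(a)$ and $1_c\in C(c)$. Since functors preserve identities, $F(1_a)=1_{F(a)}\in B(F(a))$ and $G(1_c)=1_{G(c)}\in B(G(c))$, by (\texttt{Base}\textsubscript{1}) for $B$.

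For (\texttt{Base}\textsubscript{2}), take $(i,j)\in B_/(a,h,c)$ with domain $(a_1,h_1,c_1)$, a cospan partner $(f,g)\colon (a_3,h_3,c_3)\to(a,h,c)$, and $(k,l)\in B_/(a_4,h_4,c_4)$ with domain $(a_3,h_3,c_3)$. Since $i\in A(a)$ and $k\in A(a_4)$, (\texttt{Base}\textsubscript{2}) for $A$ gives a pullback $i^*(a_3)$ of $i$ along $f$ with $k\circ f^*i\in A(a_4)$. Likewise (\texttt{Base}\textsubscript{2}) for $C$ gives a pullback $j^*(c_3)$ with $l\circ g^*j\in C(c_4)$. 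Because $F$ and $G$ are pullback-preserving, Lemma~\ref{lem: commapullbacks} applies. It produces the pullback $(i^*(a_3),\tilde h,j^*(c_3))$ in $F/G$, whose leg over $(a_3,h_3,c_3)$ is $(f^*i,g^*j)$. Composing in $F/G$ is componentwise, so $(k,l)\circ(f^*i,g^*j)=(k\circ f^*i,\,l\circ g^*j)$, and its two components lie in $A(a_4)$ and $C(c_4)$ by the above.

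It remains to check $F(k\circ f^*i)\in B(F(a_4))$ and $G(l\circ g^*j)\in B(G(c_4))$; this is the main obstacle. Since $F$ preserves the pullback of $i$ along $f$, the square $F(i^*(a_3))$ is a pullback of $F(i)$ along $F(f)$ in $\C B$. We have $F(i)\in B(F(a))$ and $F(k)\in B(F(a_4))$, so (\texttt{Base}\textsubscript{2}) for $B$ would give $F(k)\circ F(f^*i)\in B(F(a_4))$. The difficulty is that (\texttt{Base}\textsubscript{2}) only guarantees this for \emph{some} pullback, not necessarily the image under $F$. Our plan is first to appeal to repleteness of $B$, or to the fact that the base is stable under the isomorphism relating the two pullbacks. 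Failing that, we interpret ``pullback-preserving'' as $F$ sending the chosen pullbacks of $A$ to pullbacks of $B$ witnessing (\texttt{Base}\textsubscript{2}), in the spirit of the computability transfers of section~\ref{sec: CatBaseComp}, where preservation of the pullbacks of (\texttt{Base}\textsubscript{2}) is built into the definition. The same argument applies verbatim to $G$, which completes (\texttt{Base}\textsubscript{2}) for $B_/$.
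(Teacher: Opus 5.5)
Up to the final step you follow the paper's route: identities for (\texttt{Base}\textsubscript{1}); Lemma~\ref{lem: commapullbacks} applied to the pullbacks supplied by (\texttt{Base}\textsubscript{2}) for $A$ and $C$; componentwise composition. You are more careful than the paper in choosing those component pullbacks. The gap is the step you flag yourself: $F(k\circ f^*i)\in B(F(a_4))$ and $G(l\circ g^*j)\in B(G(c_4))$ are never established, and neither fallback closes it. Repleteness of $B$, or closure of $B$ under precomposition with isomorphisms, is not a hypothesis. Reading ``pullback-preserving'' as ``sends the pullbacks chosen for $A$ to pullbacks witnessing (\texttt{Base}\textsubscript{2}) for $B$'' simply assumes the membership you need. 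Moreover, the pullback chosen for $A$ depends on $k$, so there is no fixed family of chosen pullbacks to preserve. As written, the proposal does not prove the statement.

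The gap cannot be removed, because the proposition fails as stated. The witnesses for (\texttt{Base}\textsubscript{2}) in $A$ and in $B$ can be forced to be different pullbacks, even when $F$ is an identity. Here is a counterexample.

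Let $\C X$ be the preorder on $\{e,e',s,t,b\}$ with $e\cong e'$, both below $s$ and $t$, and $s,t$ below $b$. All arrows are monos, and the pullbacks of $t\to b$ along $s\to b$ have apex $e$ or $e'$. Set $A(s)=\{1_s,\ e\to s\}$, $B(s)=\{1_s,\ e'\to s\}$, $A(b)=B(b)=\{1_b,\ t\to b\}$, and only identities elsewhere. A direct check shows that $A$ is a base, and $B$ is its image under the automorphism exchanging $e$ and $e'$.

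Take $\C A=\C B=\C X$, $F=1_{\C X}$, $\C C=\bbone$ with its only base, and $G$ picking $b$; both functors trivially preserve pullbacks. Since $b$ is terminal, $F/G\cong\C X$ and $B_/$ becomes $A\cap B$. Now take $1_s$, the mono $t\to b$, and the arrow $s\to b$. Every pullback has leg $e\to s$ or $e'\to s$, and neither lies in $(A\cap B)(s)=\{1_s\}$. So (\texttt{Base}\textsubscript{2}) fails.

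The paper's proof passes over the same point. It asserts $F(f^*i_1)=F(f)^*F(i_1)\in B(F(a_3))$ ``since $F$ is pullback-preserving'' and then composes. That step needs $B$ to be stable under arbitrary pullbacks and under composition, which holds when $B$ is replete but not for general bases.

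If you add that hypothesis on $B$ alone ($A$ and $C$ may stay arbitrary), your argument is complete as written. $F$ sends the pullback chosen for $A$ to \emph{a} pullback of $F(i)$ along $F(f)$, so (\texttt{Replete}) gives $F(k)\circ F(f^*i)\in B(F(a_4))$. The same argument handles $G$.
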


\begin{proof}Since the identity on $(a,h,c)$ is $(1_a, 1_c)$ and $1_a \in A(a), F(1_a) = 1_{S(a)} \in B\big( F( a)\big)$ (we work similarly for $c$),  condition (\texttt{Base}\textsubscript{1}) is met.
    To show condition (\texttt{Base}\textsubscript{2}), we assume we are given objects and arrows as in the following diagram 
    \[ \begin{tikzcd}
        &&(a_1,h_1,c_1) \ar[d,"{(i_1,j_1)}"] \\
        (a_2,h_2,c_2) & (a_3, h_3, c_3) \ar[l,"{(i_2,j_2)}"] \ar[r,"{(f,g)}"']& (a_4,h_4,c_4),
    \end{tikzcd}\]
    where we assume that $(i_1,j_1), (i_2,j_2)$ are in the computability base $B_/$ and $(f,g)$ is arbitrary. 
    Since by Lemma~\ref{lem: commapullbacks} $F / G$ has pullbacks, let the following pullback 
    \[ \begin{tikzcd}
        & (i_1^*(a_3), \tilde h, j_1^*(c_3)) \ar[d,"{(f^*i_1, g^*j_1)}"'] \ar[r,"{(i_1^*f, j_1^*g)}"]&(a_1,h_1,c_1) \ar[d,"{(i_1,j_1)}"] \\
        (a_2,h_2,c_2) & (a_3, h_3, c_3) \ar[l,"{(i_2,j_2)}"] \ar[r,"{(f,g)}"']& (a_4,h_4,c_4).
    \end{tikzcd}\]
    As $(i_2, j_2) \circ (f^*i_1, g^*j_1) = (i_2 \circ f^*i_1, j_2 \circ g^*j_1)$ and both of these arrows are in $A, C$, respectively, it remains to show that $F(i_2 \circ f^*i_1) \in B\big( F( a_2)\big) $ and $G(j_2 \circ g^*j_1) \in B\big(G(c)\big)$. By definition of $B_/$ we get $F(i) ,F(f) \in B\big(F(a_4)\big)$, $F(i_2) \in B\big(F(a_2)\big)$. 
    Hence, $F(f^*i_1) = F(f)^*F(i_1) \in B\big(F(a_3)\big)$, since $F$ is pullback-preserving. 
    Thus, $F(f^*i_1) \circ F(i_2) = F(f^*i_1 \circ i_2) \in B\big(F(a_2)\big)$. 
    The same argument works for $G$ and the second components of the arrows. 
\end{proof}

\begin{exas}\label{ex: commas}
(1) Given a category $\C C$ with a base of computability $B$, and if $c$ is an object of $\C C$, we can endow the slice category $\C C/c$ with a base of computability $B{/c}$, defined by 
        \[ (B{/c})(f\colon b \to c) = \{ i \colon a \to b ~\vert~ i \in B(b)\}.  \]
        This follows from the fact that the slice category is the comma category of the cospan 
        \[ \begin{tikzcd}
            \C C \ar[r,"1_{\C C}"] & \C C & \bbone, \ar[l,"\iota_c"'] 
        \end{tikzcd}\]
        where $\iota_c$ is the functor that maps the only object $0$ of $\bbone$ to $c$ and its identity $1_{0}$ to $1_c$.\\
(2) Similarly, we can endow the coslice category ${c/}\C C$ with the computability base ${c/}B$
        \[ ({c/}B)(f \colon c \to b) = \{ i \colon a \to b~\vert~ i \in B(b)\}. \]
        This follows from the fact that the coslice category is the comma category of the following cospan 
        \[ \begin{tikzcd}
         \bbone \ar[r,"\iota_c"] & \C C & \C C. \ar[l,"1_{\C C}"'] 
         \end{tikzcd}
        \]
(3) If $B$ is a base of computability on a category $\C C$ we can equip the arrow category $\C C^\to$ with the base of computability
        $B^\to$ defined by 
        \[ B^\to(f \colon c \to d) = \{ (i,j) \colon (c',d') \to (c,d) ~\vert~ i \in B(c) \wedge j \in B(d)\}. \]
        This follows from the fact that the arrow category is the comma category of the following cospan
        \[ \begin{tikzcd}
            \C C\ar[r,"1_{\C C}"] & \C C & \C C.\ar[l,"1_{\C C}"'] 
        \end{tikzcd}\]
\end{exas}

\begin{defi}
    Given two categories with bases of computability, $\CB C, \CB D$, the \emph{computability transfer category} $[\C C, \C D]_{(C,D)}$ has 
    objects computability transfers $F \colon \CB C \to \CB D $, and arrows natural transformations $\eta \colon F \To G$ such that all naturality squares 
    \[
\begin{tikzcd}
    F(c) \ar[r,"F(f)"] \ar[d,"\eta_c"'] \pullback & F(c') \ar[d,"\eta_{c'}"]
    \\
    G(c) \ar[r,"G(f)"'] & G(c')
\end{tikzcd}
    \]
    are pullback squares.
\end{defi}

\begin{lem}\label{lem::exponent1}
    In the  computability transfer category $[\C C, \C D]_{(C,D)}$ a pullback of $\eta$ along $\mu$ exists, if for every $c \in \C C$ a pullback of $\eta_c$ along $\mu_c$ exists.
\end{lem}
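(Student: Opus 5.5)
The plan is to build the pullback componentwise and then check that everything lives in $[\C C,\C D]_{(C,D)}$. Let $\eta\colon F\To H$ and $\mu\colon G\To H$ be arrows of $[\C C,\C D]_{(C,D)}$. For each $c\in\C C$, choose a pullback
\[
P(c)\xrightarrow{\ \pi_c\ }F(c),\qquad P(c)\xrightarrow{\ \rho_c\ }G(c)
\]
of $\eta_c$ and $\mu_c$. For $f\colon c\to c'$, define $P(f)\colon P(c)\to P(c')$ as the unique arrow into the pullback $P(c')$ induced by $F(f)\circ\pi_c$ and $G(f)\circ\rho_c$. These agree over $H(c')$ by naturality of $\eta$ and $\mu$. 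Functoriality of $P$ follows from the uniqueness clause of the pullback property, and $\pi\colon P\To F$ and $\rho\colon P\To G$ are natural by construction.

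Next I show that the naturality squares of $\pi$ and $\rho$ are pullbacks, so that $\pi,\rho$ are arrows of $[\C C,\C D]_{(C,D)}$. Consider the cube with top face $P(f),F(f),G(f)$ and bottom face $H(f)$.
\begin{itemize}
\item The front and back faces are the pullbacks defining $P(c)$ and $P(c')$.
\item The face for $\mu$ is a pullback by hypothesis.
\end{itemize}
By the pasting lemma for pullbacks, the composite rectangle $P(c)\to G(c)\to H(c)$ over $P(c')\to G(c')\to H(c')$ equals the rectangle through $F$. From this we deduce that the square for $\pi$ at $f$ is a pullback. The argument for $\rho$ is symmetric, using that the squares for $\eta$ are pullbacks.

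The universal property is then routine. Given $\alpha\colon Q\To F$ and $\beta\colon Q\To G$ in the category with $\eta\circ\alpha=\mu\circ\beta$, define the mediating $\gamma_c\colon Q(c)\to P(c)$ by the pointwise pullback property. Naturality of $\gamma$ follows by uniqueness. That the naturality squares of $\gamma$ are pullbacks follows from the pullback-cancellation half of the pasting lemma: the squares of $\pi\circ\gamma=\alpha$ and of $\pi$ are pullbacks. Uniqueness of $\gamma$ is pointwise.

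The main obstacle is showing that $P$ is a computability transfer. This requires two things.
\begin{itemize}
\item For $i\in C(c)$, we need $P(i)\in D(P(c))$. By the previous step, $P(i)$ is a pullback of $F(i)\in D(F(c))$ along $\pi_c$. Condition (\texttt{Base}\textsubscript{2}), with the identity as the outer arrow, only places a chosen such pullback in $D$, not every one.
\item $P$ must preserve the pullbacks of (\texttt{Base}\textsubscript{2}).
\end{itemize}
Preservation I expect to get from the fact that $F,G,H$ preserve these pullbacks and that limits commute with limits: a pullback of pullbacks, computed pointwise, is again a pullback.

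For base membership, I would first try to exploit the freedom in choosing the pointwise pullbacks $P(c)$. For the domain $c'$ of some $i\in C(c)$, pick $P(c')$ to be exactly the pullback of $F(i)$ along $\pi_c$ that (\texttt{Base}\textsubscript{2}) guarantees lies in $D$, possibly proceeding along the arrows of the base. If this choice cannot be made coherently, I would fall back to assuming $D$ is replete (closed under isomorphisms), which makes membership automatic.
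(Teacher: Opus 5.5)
Up to base membership, your argument is the paper's. Both use pointwise pullbacks, the arrows induced through the cube, and functoriality by uniqueness. Both use the pasting argument in the cube, with the cartesian naturality squares of the given transformations, to show the projection squares are pullbacks. The paper calls preservation of the (\texttt{Base}\textsubscript{2})-pullbacks and the universal property straightforward; your limits-commute-with-limits and pullback-cancellation arguments handle them correctly.

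The routes part at base membership. The paper shows that the left face of its cube exhibits $q$ as a pullback of $F_2(f)\in D(F_2(c'))$, and then simply asserts that (\texttt{Base}\textsubscript{2}) gives $q\in D$. You are right to reject exactly this inference: (\texttt{Base}\textsubscript{2}), unlike (\texttt{Replete}), puts only \emph{some} pullback into the base. But you then leave the step open, so the proposal does not prove the statement. Neither fallback closes it: re-choosing the pointwise pullbacks cannot work in general, and repleteness is an added hypothesis.

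In fact the step cannot be closed, because the lemma fails for non-replete $D$. Here is a counterexample.
\begin{itemize}
\item Let $\C C$ be the one-object category with a single non-identity arrow $\sigma$, $\sigma\circ\sigma=1$, and let $C(*)=\{1,\sigma\}$.
\item Let $\C D$ be the skeleton of finite sets, with objects $n=\{0,\dots,n-1\}$. Let $D(n)$ consist of the order-preserving injections into $n$, together with the swap $\tau\colon 2\to 2$ in $D(2)$.
\item $D$ is a base. Pull back an order-preserving injection via the order-preserving enumeration of the preimage, and pull back $\tau$ via $1_s$. The only composites needing a check are $\tau\circ e$ with $e\colon w\to 2$ an order-preserving injection, and these lie in $D(2)$.
\item The computability transfers are the $n$ with trivial involution, together with $T=(2,\tau)$. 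Since every $F(\sigma)$ is invertible, every natural transformation has pullback naturality squares. Hence $[\C C,\C D]_{(C,D)}$ is the full subcategory of finite $\mathbb{Z}/2$-sets on these objects, with terminal object $1$.
\item The cospan $T\to 1\leftarrow T$ has pointwise pullbacks. The pointwise candidate is $4$ with a fixed-point-free involution, and however the pullback is chosen this is not in $D(4)$, whose only automorphism is $1_4$.
\item No pullback exists at all (hom-sets in $[\C C,\C D]_{(C,D)}$). A pullback $Q$ would need $\Hom(1,Q)\cong\Hom(1,T)^2=\emptyset$, which forces $Q\in\{0,T\}$. It would also need $\Hom(T,Q)\cong\Hom(T,T)^2$, which has four elements. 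But $\Hom(T,0)=\emptyset$ and $\Hom(T,T)=\{1,\tau\}$.
\end{itemize}

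So an extra hypothesis such as repleteness of $D$ is genuinely needed, and the paper's proof tacitly uses it. Under that hypothesis your argument is complete: (\texttt{Replete}) with $i=1_{P(c)}$ puts every pullback of $F(i)$ along $\pi_c$, in particular $P(i)$, into $D(P(c))$.
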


\begin{proof}
    Suppose we are given a cospan $\begin{tikzcd} F_2 \ar[r,"\mu",Rightarrow] & F_1 & F_3 \ar[l,"\eta"',Rightarrow]\end{tikzcd}$ of computabiltiy transfers and natural transformations. 
    For every $c \in \C C$ we consider the following pullback, which exists by assumption,
    \[ 
    \begin{tikzcd}
    \eta_c^*(F_2(c)) \ar[r] \ar[d] \pullback & F_3(c) \ar[d,"\eta_c"]
    \\
    F_2(c) \ar[r,"\mu_c"'] & F_1(c).
    \end{tikzcd} 
    \]
    We define $\eta^*(F_2)$ via $\big(\eta^*(F_2)\big)(c) := \eta_c^{-1}\big(F_2(c)\big)$. If $f \colon c \to c'$ let the following cube 
    \begin{equation}\begin{tikzcd}
        \eta_c^{*}(F_2(c)) \ar[rr] \ar[dr,dotted,"q"]\ar[dd] \pullback && F_3(c) \ar[dd,"\eta_c"{near end}] \ar[dr,"F_3(f)"]
   \\
    &\eta_{c'}^{*}(F_2(c')) \ar[rr] \ar[dd] \pullback && F_3(c') \ar[dd,"\eta_{c'}"]
     \\
    F_2(c) \ar[rr,"\mu_c"'{near end}] \ar[dr,"F_2(f)"'] && F_1(c) \ar[dr,"F_1(f)"]
    \\
    &F_2(c') \ar[rr,"\mu_{c'}"'] && F_1(c'),
    \end{tikzcd}\label{Exponent::diag1}
    \end{equation}
    where $q$ is given by the universal property of the pullback $\eta_{c'}^{*}\big(F_2(c')\big)$.
    We set $\big(\eta^*(F_2)\big)(f) := q$.
    This construction preserves identities and compositions of arrows by definition, hence $\eta^*(F_2)$ is a functor.
    It remains to check that $\eta^*(F_2)$ preserves the pullbacks used in condition (\texttt{Base}\textsubscript{2}) and $\eta^*(F_2)(f) \in D\big(\eta^*(F_2)(c')\big)$ if $f \in C(c')$.
    Note that if the following pullback is given in $\C C$---with $i \in C(c_1)$
    \[\begin{tikzcd}
        c_4 \ar[r,"p_1"] \ar[d,"p_2"'] &c_2 \ar[d,"i"]
        \\
        c_3 \ar[r,"f"'] & c_1,
    \end{tikzcd}\]
    then the following squares are pullbacks, for every $\triangle \in \{F_1,F_2,F_3\}$. 
    \[ \begin{tikzcd}
        \triangle(c_4) \ar[r,"\triangle(p_1)"] \ar[d,"\triangle(p_2)"'] & \triangle(c_2) \ar[d,"\triangle(i)"]
        \\
        \triangle(c_3) \ar[r,"\triangle(f)"'] & \triangle(c_1).
    \end{tikzcd}\]
    It requires a straightforward calculation to show that the following rectangle is a pullback
     \[ \begin{tikzcd}
        \big(\eta^*(F_2)\big)(c_4) \ar[r,"\big(\eta^*(F_2)\big)(p_1)"] \ar[d,"\big(\eta^*(F_2)\big)(p_2)"'] &[3em] \big(\eta^*(F_2)\big)(c_2) \ar[d,"\big(\eta^*(F_2)\big)(f)"]
        \\
        \big(\eta^*(F_2)\big)(c_3) \ar[r,"\big(\eta^*(F_2)\big)(g)"'] & \big(\eta^*(F_2)\big)(c_1).
    \end{tikzcd}\]
    If $f \in C(c')$, then in the cube~\eqref{Exponent::diag1} the back face and the right face can be pasted to form a composite pullback, thus, using the commutativity of all squares, the front face pasted with the left face is a pullback as well.
    Since the front face is a pullback, by the pasting property of pullbacks the left side is a pullback as well.
    But as $F_2(f) \in D(F_2(c'))$---since $F_2$ is a computability transfer---condition (\texttt{Base}\textsubscript{2}) yields that $q \in D\big(\eta_{c'}^*\big(F_2(c')\big)\big)$, hence, $\eta^*F_2$ is a computability transfer.
That it is a pullback, is straightforward to check.
\end{proof}

\begin{rem}
    If we restrict ourselves to replete bases of computability, the assumption that all naturality squares of the natural transformation involved are cartesian can be dropped, and replaced by the assumption that $\eta_c \in D(F_1(c))$, for every $c \in \C C$.
    This follows from the fact that if we are given arrows $f \colon c \to c', g \colon c' \to c''$, such that, $g$ is a mono and $g \circ f \in C(c'')$, then the following diagram
    \[
    \begin{tikzcd}
        c \ar[r,"1_c"] \ar[d,"f"'] \pullback & c \ar[d,"g \circ f"] 
        \\
        c' \ar[r,"g"'] & c''
    \end{tikzcd}
    \]
     is a pullback square, hence, $f \in C(c')$.
    Applying this fact on the left face of the cube~\eqref{Exponent::diag1}, we get that $q \in D\big(\eta_{c'}^*\big(F_2(c')\big)\big)$, since $\eta_{c'}, \eta_c$ lie in $D$, hence, their pullbacks do so as well, and as $F_2$ is a computability transfer, the composite of the lower and left arrow of the left face lies in $D$.
    Since the right arrow lies in $D$ as well, so does the top arrow $q$.
\end{rem}

\begin{prop}\label{prp: basefunctorcat}
    The computability transfer category $[\C C,\C D]_{(C,D)}$ has a computability base $[\C C,D)]$ defined by
    \[ [\C C, D](F) = \big\{ \eta \colon F' \Rightarrow F ~\vert~ \forall_{c \in \C C}\big(\eta_c \in D\big(F(c)\big)\big)\big\}.\]
\end{prop}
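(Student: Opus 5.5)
We verify the two axioms componentwise, using Lemma~\ref{lem::exponent1} to produce the pullbacks required in (\texttt{Base}\textsubscript{2}). Before that, we note that every $\eta \in [\C C,D](F)$ is a mono in $[\C C,\C D]_{(C,D)}$. Each component $\eta_c$ lies in $D(F(c))$ and is therefore a mono in $\C D$. Hence if $\eta \circ \alpha = \eta\circ\beta$ for arrows $\alpha,\beta$ in the transfer category, then $\alpha_c=\beta_c$ for all $c$, and so $\alpha=\beta$. For (\texttt{Base}\textsubscript{1}), the identity $1_F$ has components $1_{F(c)} \in D(F(c))$. Its naturality squares are trivially pullbacks, so $1_F$ is an arrow of $[\C C,\C D]_{(C,D)}$ and lies in $[\C C,D](F)$.

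For (\texttt{Base}\textsubscript{2}), let $\eta \colon F_3 \To F_1$ be in $[\C C,D](F_1)$, let $\mu\colon F_2\To F_1$ be arbitrary, and let $\theta \colon F_2 \To F_0$ be in $[\C C,D](F_0)$. For each $c$, the components $\eta_c \in D(F_1(c))$ and $\theta_c \in D(F_0(c))$ together with $\mu_c$ form exactly the configuration of (\texttt{Base}\textsubscript{2}) for $D$. So a pullback $\eta_c^*(F_2(c))$ exists in $\C D$, and moreover for that pullback we have $\theta_c\circ \mu_c^*\eta_c \in D(F_0(c))$. We then invoke Lemma~\ref{lem::exponent1} with these specific componentwise pullbacks. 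This gives a computability transfer $\eta^*(F_2)$, natural transformations $\mu^*\eta\colon \eta^*(F_2)\To F_2$ and $\eta^*\mu$, and a pullback square in $[\C C,\C D]_{(C,D)}$. The composite $\theta\circ\mu^*\eta$ is computed componentwise, so its component at $c$ is the arrow $\theta_c\circ\mu_c^*\eta_c$ chosen above, which lies in $D(F_0(c))$. Hence $\theta\circ\mu^*\eta\in[\C C,D](F_0)$.

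The main obstacle is the choice of pullbacks. (\texttt{Base}\textsubscript{2}) only guarantees that \emph{some} pullback in $\C D$ has its composite in $D$, so the construction in Lemma~\ref{lem::exponent1} must be run with exactly those chosen pullbacks at each $c$. I will check that the proof of that lemma allows an arbitrary choice of componentwise pullbacks. The functoriality of $\eta^*(F_2)$ comes from the universal property regardless of the choice, so this should be fine.

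A second point to check is that $\theta\circ\mu^*\eta$ is an arrow of the transfer category, i.e.\ that its naturality squares are pullbacks. Pullback squares compose by the pasting lemma, so it suffices that the naturality squares of $\mu^*\eta$ are pullbacks. These are the left faces of cube~\eqref{Exponent::diag1}. The pasting argument in the proof of Lemma~\ref{lem::exponent1} shows that they are pullbacks, because the naturality squares of $\eta$ (the right faces) are pullbacks.
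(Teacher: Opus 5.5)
Your proposal is correct and follows the paper's proof. (\texttt{Base}\textsubscript{1}) comes from the identity components, and (\texttt{Base}\textsubscript{2}) is obtained componentwise from (\texttt{Base}\textsubscript{2}) for $D$ together with Lemma~\ref{lem::exponent1}, so that $\theta\circ\mu^*\eta$ has components $\theta_c\circ\mu_c^*\eta_c\in D(F_0(c))$.

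Your extra checks make explicit what the paper leaves implicit: monicity of base arrows, running the lemma with the componentwise pullbacks that witness (\texttt{Base}\textsubscript{2}) in $D$, and the pullback naturality squares of $\mu^*\eta$. One caveat: inside the lemma, the choice of pullbacks matters not only for functoriality but also for showing $\eta^*(F_2)(f)\in D$ when $f$ is in $C$. That step applies (\texttt{Base}\textsubscript{2}) to one specific left-face pullback, so it is automatic only when $D$ is replete; the paper's lemma passes over this point as well.
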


\begin{proof}
   % We show that $[\C C, B]$ defined in this manner constitutes a base of computability. 
    The proof of condition (\texttt{Base}\textsubscript{1}) is immediate, since if $F$ is a functor, then the identity $1_F$ is the natural transformation consisting of the identities $1_{F(c)} \colon F(c) \to F(c)$, for every $c$. 
    But $1_{F(c)}$ is in $D(F(c))$, for every $c \in \C C$, since $D$ is a base of computability, so $1_F$ is in $[\C C, D]$.
    To show condition (\texttt{Base}\textsubscript{2}), let functors and natural transformations as in the following diagram
    \[ 
    \begin{tikzcd}
        && F_4 \ar[d,"\eta",Rightarrow]
        \\
        F_1 & F_2 \ar[l,"\chi",Rightarrow] \ar[r,"\mu"',Rightarrow] & F_3,
    \end{tikzcd}\]
    where $\chi,\eta$ are in $[\C C,D]$, and $\mu$ is arbitrary. Then for every $c \in \C C$ we get a diagram 
    \[ 
    \begin{tikzcd}
        && F_4(c) \ar[d,"\eta_c"]
        \\
        F_1(c) & F_2(c) \ar[l,"\chi_c"] \ar[r,"\mu_c"'] & F_3(c),
    \end{tikzcd}\]
    where a pullback $\eta_c^*\big(F_2(c)\big)$ exists.
    Thus, from Lemma~\ref{lem::exponent1} a pullback of $\eta$ along $\mu$ exists, and by construction, every arrow lies in $D(F_2(c))$, which allows us to conclude that $\chi_c \circ \mu_c^*\eta_c$ is in $D$, and thus, $\chi \circ \mu^*\eta$ is in $[\C C,D]$.
\end{proof}

\begin{defi}
    Let $[\C C, \C D]_{(C,D)}^{\pull}$ be the subcategory of $[\C C,\C D]_{(C,D)}^{\pull}$ that consists of computability transfers that preserve \emph{all} pullbacks, not only those used in condition (\texttt{Base}\textsubscript{2}).
    Furthermore let $\CatBaseComp^{\pull}$ be the subcategory of $\CatBaseComp$ consisting of all computability models and computability transfers that preserve \emph{all} pullbacks.
\end{defi}

\begin{prop}\label{prp: exp} The computability transfer category \( [\C C, \C D]_{(C,D)}^{\pull} \) with the base \( [\C C, D] \), defined in Proposition~\ref{prp: basefunctorcat}, is the exponential of \( \CB C, \CB D \) in \( \CatBaseComp^{\pull} \).
\end{prop}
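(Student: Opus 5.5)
We establish the exponential property in \( \CatBaseComp^{\pull} \) through the usual curry/uncurry bijection, verifying at each step that bases and pullbacks are respected. The first task is to confirm that \( \big([\C C,\C D]^{\pull}_{(C,D)},[\C C,D]\big) \) lies in \( \CatBaseComp^{\pull} \). By Proposition~\ref{prp: basefunctorcat} (read in the subcategory) \( [\C C,D] \) is a base. It remains to check that pullbacks computed pointwise as in Lemma~\ref{lem::exponent1} stay inside the subcategory: if \( F_1,F_2,F_3 \) preserve all pullbacks, then \( \eta^*(F_2) \) does as well, because pullbacks commute with pullbacks (a cube argument). Moreover, the naturality squares of the projections \( \eta^*(F_2)\To F_2 \) and \( \eta^*(F_2)\To F_3 \) are pullbacks by pasting in the cube~\eqref{Exponent::diag1}.

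Next we define evaluation \( \eval \colon [\C C,\C D]^{\pull}_{(C,D)}\times\C C\to\C D \) by \( (F,c)\mapsto F(c) \) and \( (\eta,f)\mapsto \eta_{c'}\circ F(f)=G(f)\circ\eta_c \). The product carries the base from Lemma~\ref{lem: prod}. We must show that \( \eval \) is a computability transfer preserving all pullbacks. For base arrows, \( (\eta,i) \) with \( \eta\in[\C C,D](G) \) and \( i\in C(c') \) is sent to \( \eta_{c'}\circ F(i) \). Here \( F(i)\in D(F(c')) \) and \( \eta_{c'}\in D(G(c')) \). Since the naturality square of \( \eta \) at \( 1_{c'} \) is a pullback, (\texttt{Base}\textsubscript{2}) applied to the cospan \( \eta_{c'},1_{G(c')} \) and the arrow \( F(i) \) should place the composite in \( D(G(c')) \). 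For pullback preservation we take a pullback \( (\eta_k,f_k) \) in the product, which is a pullback componentwise, and factor its image as pasted squares. One square is the pullback given by applying a pullback-preserving \( F \) to the \( \C C \)-pullback; the others are naturality squares of the \( \eta \)'s, and these are pullbacks by definition of the arrows. Pasting then yields a pullback.

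Currying works as follows. Given \( H\colon \CB E\times\CB C\to\CB D \) in \( \CatBaseComp^{\pull} \), set \( \Lambda H(e):=H(e,\textminus) \) and \( \Lambda H(g)_c:=H(g,1_c) \). Then \( H(e,\textminus) \) preserves all pullbacks, since squares of the form \( (1_e,\text{pullback}) \) are pullbacks in the product, and it sends \( C \) into \( D \) because \( (1_e,i) \) lies in the product base. The naturality squares of \( \Lambda H(g) \) are images of the squares \( (g,1)\times(1,f) \), which are pullbacks in \( \C E\times\C C \), so \( \Lambda H(g) \) is an arrow of the transfer category. For \( g \) in \( E \), we have \( (g,1_c) \) in the product base, so \( \Lambda H(g)\in[\C C,D] \). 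Pullback preservation by \( \Lambda H \) follows because pullbacks in the functor category are pointwise (Lemma~\ref{lem::exponent1}), combined with pullback preservation of \( H \) on squares of the form \( (\text{pullback},1_c) \). The identities \( \eval\circ(\Lambda H\times 1)=H \) and uniqueness of \( \Lambda H \) are then the standard \( \Cat \) computations.

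The main obstacle is the base condition for \( \eval \), namely showing that \( \eta_{c'}\circ F(i)\in D \), since bases need not be closed under composition. If the pullback argument above fails because the chosen pullback in (\texttt{Base}\textsubscript{2}) is not the trivial one, we will fall back on repleteness-like reasoning. In that case we use the fact that the pullback of \( \eta_{c'} \) along \( G(i) \) is realised by \( \eta_c \) together with \( F(i) \) (naturality at \( i \) is a pullback), and write the composite as \( G(i)\circ\eta_c \).
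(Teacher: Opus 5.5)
Your route is the paper's. You curry via $\hat F(b)=F(b,\textminus)$ and $\hat F(g)_c=F(g,1_c)$. You get pullback preservation of $\hat F(b)$ from squares $(1_b,\text{pullback})$, cartesianness of $\hat F(g)$ from the squares built from $(g,1_c)$ and $(1_b,f)$, and base preservation from $(g,1_c)$ lying in the product base. You take $\eval(\eta,f)$ to be the diagonal of a naturality square. You also check something the paper leaves implicit: that the pointwise pullbacks of Lemma~\ref{lem::exponent1} stay inside $[\C C,\C D]^{\pull}_{(C,D)}$ and have cartesian projections.

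The one real divergence is the base condition for $\eval$, and there you should drop your first argument. Write $i',j',f'$ for the arrows $i,j,f$ of (\texttt{Base}\textsubscript{2}).
\begin{itemize}
\item Whichever of $\eta_{c'}$, $1_{G(c')}$ plays the role of $j'$, that instance only produces composites $i'\circ\eta_{c'}$ or $i'\circ 1$, never $\eta_{c'}\circ F(i)$.
\item The only instance over a trivial pullback that does give $\eta_{c'}\circ F(i)$ is $i'=\eta_{c'}$, $j'=F(i)$, $f'=1_{F(c')}$. That is just closure of $D$ under composition.
\item This is in effect the paper's own step (``since $H$ is a computability transfer, we get $\eta_d\circ H(f)\in E(G(d))$''). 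But Remark~\ref{rem: dominion} itself says such closure is not guaranteed for non-replete bases.
\end{itemize}

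Your fallback is the better argument and should be the main one. Take $i'=G(i)\in D(G(c'))$, $j'=\eta_{c'}\in D(G(c'))$ and $f'=G(i)$. The cartesian naturality square of $\eta$ at $i$ exhibits $\eta_c$ as ${f'}^{*}j'$, so $i'\circ {f'}^{*}j'=G(i)\circ\eta_c=\eta_{c'}\circ F(i)$. This uses exactly the cartesianness imposed on the arrows of the transfer category. It still needs (\texttt{Base}\textsubscript{2}) to hold for this particular pullback, which is automatic for replete bases; the paper's step carries the same caveat.

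One precision point on $\eval$ preserving pullbacks. Your pasting grid has four squares:
\begin{itemize}
\item the apex transfer applied to the $\C C$-pullback;
\item the naturality squares of the two legs at the two $\C C$-arrows of the cospan;
\item the component, at the corner object of $\C C$, of the pullback of natural transformations.
\end{itemize}
The last is not a naturality square. Using it requires pullbacks in the transfer category to be pointwise. Lemma~\ref{lem::exponent1}, via uniqueness up to iso, gives this only when the pointwise pullbacks exist in $\C D$.
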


\begin{proof}
	Let the following commutative triangle of categories and functors
	\[ \begin{tikzcd}
		{[\C C, \C D] \times \C C} 
		\ar[r,"\eval"] & \C D
		\\
		\C B \times \C C,
		\ar[ur, "F"']
		\ar[u,"\hat F \times 1_\C D"]
	\end{tikzcd} \]
where \(  F \colon \CB B \times \CB C \to \CB D \) is a computability transfer. 
We first show the stronger property that also 
\[ \hat F \colon \C B \to [\C C, \C D]_{(C,D)}^{\pull}.
\] 
To see that $\hat F$ maps $b \in \C B$ to a pullback-preserving computability transfer first observe that if $i \in \C C$, then 
$\big(\hat F(b)\big)(i) = F(1_b,i) \in \C D(F(b,c))$---as $F$ is a computability transfer---and $D(F(b,c)) = D((\hat F(b))(c))$, so 
$\big(\hat F(b)\big)(i) \in D((\hat F(b))(c))$.
To see that $\hat F(b)$ preserves pullbacks assume we are given the following pullback square in $\C C$
\begin{equation}
\begin{tikzcd}
    c_1 \ar[r] \ar[d] \pullback & c_2 \ar[d]
    \\
    c_3 \rar & c_4.
\end{tikzcd}\label{exponen::lem2}
\end{equation}
Then the following is a pullback square in $\C B \times \C C$
\[
\begin{tikzcd}
    (b,c_1) \rar \dar \pullback & (b,c_2) \dar
    \\
    (b,c_3) \rar & (b,c_4).
\end{tikzcd}
\]
Thus it is mapped to a pullback square by $F$. 
But the image of the above pullback square under $F$ is the image of the pullback square~\eqref{exponen::lem2} under $\hat F(b)$, so $\hat F(b)$ is pullback-preserving.
To see that all naturality squares of $\hat F(f)$ are cartesian for every arrow $f\colon b' \to b$ in $\C B$, 
observe that $\hat F(f)_c = F(1_c,f)$ for every $c \in \C C$, thus the naturality squares are of the form 
\begin{equation}
    \begin{tikzcd}
        F(b,c) \ar[r,"{F(g,1_c)}"] \ar[d,"{F(1_b,f)}"'] & F(b',c) \ar[d,"{F(1_{b'},f)}"]
        \\
        F(b,c') \ar[r,"{F(g,1_{c'})}"'] & F(b',c')
    \end{tikzcd}
\end{equation}
and the following are cartesian squares in $\C B \times \C C$
\[
\begin{tikzcd}
    (b,c) \ar[r,"{(g,1_c)}"] \ar[d,"{(1_b,f)}"'] & (b',c) \ar[d,"{(1_{b'},f)}"]
        \\
    (b,c') \ar[r,"{(g,1_{c'})}"'] & (b',c'),
\end{tikzcd}
\]
thus as $F$ is pullback-preserving the claim is shown.

It remains to show that $\hat F$ and $\eval$ are computability transfers.
	To show that \( \hat F \) is a computability transfer, we observe that, if \( g \colon b \to b' \) lies in \( B(c') \), then 
	$	 \hat F(g)_c = F(g,1_c) \in D\big(F(b',c)\big)
	$
    since $F$ is a computability transfer and 
	\[
		\big(	[\C D, \C E],[\C D,E]\big) \times \CB D = \big([\C D, \C E] \times \C D, [\C D,E] \times D\big),	
	\]
    thus $(g,1_c) \in([\C D,E] \times D)(b',c) $.
    That $F$ preserves pullbacks is straightforward to show.
	%as defined earlier, 
	Let \( (\eta,f) \in \big([\C D,E] \times D\big)(G,d) \), i.e., \(
		\eta \colon H \Rightarrow G\) and \(f \colon d' \to d\).
	By definition, \( \eval(\eta,f)  \) is the diagonal in the following commutative diagram
	\[ \begin{tikzcd}
		H(d') \ar[r,"H(f)"] \ar[d,"\eta_{d'}"']
		& H(d) \ar[d,"\eta_d"]
		\\
		G(d') \ar[r,"G(f)"']
		& G(d).
	\end{tikzcd} \]
	By definition of \( [\C D, E]  \) we have that \( \eta_d \in E\big(G(d)\big) \), and since \( H \) is a computability transfer, we get \( \eta_d \circ H(f) \in E\big(G(d)\big) \).
    That $\eta$ preserves pullbacks is straightforward to show.
	Thus, \( \eval  \) is a computability transfer.
\end{proof}

\section{\texorpdfstring{$\CatBaseComp$}{CatBaseComp} is a type-category}\label{sec: typecat}
In this section we present the notion of a \emph{category with a family-arrow structure} and \emph{Sigma-objects}, introduced in~\cite{petrakisCategoriesDependentArrows2023} and directly linked to the notion of \emph{type-category}, introduced by Pitts in~\cite{Pi01}. Sigma-objects generalise the Grothendieck construction in the abstract framework of categories with a family-arrow structure. The main result of this section is that the category $\CatBaseComp$ is a type-category, i.e., a (fam, $\Sigma$)-category with a terminal object (Theorem~\ref{thm: typecat}).
In this way, the category $\CatBaseComp$ can be seen as a model of dependent type theory (see~\cite{hottbook}). 
Moreover in~\cite[Chapter~3]{Ga26} it is shown that (fam, $\Sigma$)-categories correspond to \emph{discrete comprehension categories}.

\begin{defi}\label{def: famCatDef}
    A \emph{fam-category} is a category $\C C$ together with a collection of \emph{family arrows} $\fHom(c)$, for every object $c$ in $\C C$. 
    We denote family arrows by Greek letters $\lambda, \mu,$ etc., and we picture them by arrows starting from $c$
     \[ \begin{tikzcd}
        c \ar[r,"\lambda"] &.
    \end{tikzcd}\]
    For every $c,d \in \C C$ there is a composition operation $\circ \colon \fHom(d) \times \Hom(c,d) \to \fHom(c),$ $ (\lambda,f) \mapsto \lambda \circ f$, such that the following conditions hold:\\[1mm]
  $(F_1)$ For every $c \in \C C$ and $\lambda \in \fHom(c)$ we have $\lambda \circ 1_c = \lambda$
        \[ \begin{tikzcd}
    c \ar[r,"1_c"'] \ar[rr,"\lambda", bend left = 40] & c \ar[r,"\lambda"'] &\none.
\end{tikzcd}\]
$(F_2)$ For every $c,d,e \in \C C$ and $\lambda \in \fHom(e), g \in \Hom(d,e),f \in \Hom(c,d)$ we have that
        $\lambda \circ (g \circ f) = (\lambda \circ g) \circ f$
     %   Diagrammatically this can be visualised through the commutativity of
        \[ \begin{tikzcd}
    c \ar[r,"f"] \ar[rrr,"(\lambda \circ g )\circ f",bend left = 40]
    \ar[rr,"g \circ f", bend right = 40]
    \ar[rrr,"\lambda\circ (g \circ f)"', bend right = 40]
    & d \ar[r,"g"] \ar[rr,"\lambda \circ g"', bend left = 40]
    & e \ar[r,"\lambda"]
    & \none .
\end{tikzcd}\]
\end{defi}

\begin{exas}\label{ex: famarrows}
  %  \begin{enumerate}
  (i) \emph{Constant families}: Every category is turned into a fam-category by setting $\fHom(c) = \C C_0$, the class of objects, for every $c \in \C C$. 
        Composition is defined by $c \circ f = c$ for every $c$ and $f$ in $\C C$.\\
(ii) \emph{Family-arrows in categories}: If $\Cat$ is the category of locally small categories, let $\fHom(\C C) = \Fun(\C C\opp, \Sets)$ with composition the composition of functors.\\
(iii) (Pitts) \emph{Family-arrows in a topos} $\C C$ with subobject classifier $(\top, \Omega)$: If $a \in \C C$, let
%let
        \[ \fHom(a) := \bigcup_{b \in \C C}\Hom(a \times b, \Omega). \]
        The composition is defined by the rule $\big((b,e),g) \mapsto \big(b,e \times (g \times 1_b)\big)$
        %\[ \coprod_{d \in \C C}\Hom(c \times d, \Omega) \times \Hom(b,c) \ni \big((d,g),f) \mapsto \big(d,g \times (f \times \B 1_d)\big).\]
        %Diagramatically this can be visualized as 
        \[\begin{tikzcd}
&c \times b 
	\ar[dr,"g \circ \pr_b"] \ar[dl,"1_b \circ \pr_c"'] 
	\ar[d,"g \times 1_b"{description},dashed] \ar[dd,"{(b,e) \circ g}"{near start}, to path = {
	(\tikztostart.north) |-
	([xshift = 6em, yshift = .5em]\tikztostart.north) 
	|-  (\tikztotarget.east) \tikztonodes }, rounded corners]
\\
a 
 & a \times b  
	\ar[d,"e"'] \ar[l,"\pr_a"] \ar[r,"\pr_b"'] 
& b 
\\
& \Omega.
\end{tikzcd} \]
(iv) If $A$ is a type in a universe $U$ (see~\cite{hottbook}), a family arrow on $A$ is a type-family $P \colon A \to U$.
%over $A$.
\end{exas}

\begin{defi}
 \( \CatBaseComp\) is a fam-category where for each \( \CB C \) we define
 \[
 	\fHom(\CB C) := \{ F \colon \C C\opp \to \Sets \}.
 \]
The composition is simply the composition of the underlying functors.
\end{defi}

\begin{defi}\label{def: famSigma}
A fam-category $\C C$ has \emph{Sigma-objects}, or is a (fam, $\Sigma$)-\emph{category}, if for every $a, b \in \C C$, for every $\lambda \in \fHom(a)$, and for every $f \in \Hom(b,a)$ there is a \emph{Sigma-object} $\sum_a \lambda \in \C C$ and
%there are 
arrows $\pr_1^{a, \lambda} \in \Hom\big(\sum_a \lambda, a\big)$ and $\Sigma_{\lambda}f \in  \Hom\big(\sum_b (\lambda \circ f), \sum_a \lambda\big)$, such that the following rectangle is a pullback, 
	\[
    \begin{tikzcd}
        \Grothendieck{b}{(\lambda \circ f)} \pullback\ar[r,"\Grothendieck{\lambda}{f}"]
        \ar[d,"\pr_1^{b,\lambda \circ f}"']
        &  \Grothendieck{a}{\lambda} \ar[d,"\pr_1^{a,\lambda}"]
        \\
        b \ar[r,"f"'] & a
    \end{tikzcd}
    \]
and the following conditions hold:\\[1mm]
$(\Sigma_1)$ $\Sigma_{\lambda}1_a = 1_{ \sum_a \lambda}$,\\
$(\Sigma_2)$ $\ \Sigma_{\lambda}(f \circ g) = \big(\Sigma_{\lambda}f\big) \circ \Sigma_{(\lambda \circ f)}g$, for every $f \in \Hom(b, a)$ and $g \in \Hom(c, b)$
	\[
    \begin{tikzcd}
        \Grothendieck{a}{(\lambda \circ 1_a} \pullback\ar[r,"\Grothendieck{\lambda}{1_a}"]
        \ar[d,"\pr_1^{a,\lambda \circ 1_a}"'] 
        & \Grothendieck{a}{\lambda} \ar[d,"\pr_1^{a,\lambda}"]
        \\
        a \ar[r,"1_a"'] & a
    \end{tikzcd}
    \]
	\[
    \begin{tikzcd}
        \Grothendieck{c}{(\lambda \circ f) \circ g} \pullback\ar[r,"\Grothendieck{(\lambda \circ f)}{g}"]
        \ar[d,"\pr_1^{c,(\lambda \circ f) \circ g}"']
        \ar[rr,"\Grothendieck{\lambda}{(f \circ g)}", bend left = 30]
        &[2em] \Grothendieck{b}{(\lambda \circ f)} \pullback
        \ar[r,"\Grothendieck{\lambda}{f}"]
        \ar[d,"\pr_1^{b, \lambda \circ f}"']
        &[2em] \Grothendieck{a}{\lambda}
        \ar[d,"\pr_1^{a,\lambda}"]
        \\
        c \ar[r,"g"'] 
        & b \ar[r,"f"']
        & a.
    \end{tikzcd}
    \]
A \emph{type-category} (Pitts~\cite{Pi01}) is a (fam, $\Sigma)$-category with a terminal object\footnote{In~\cite{Pi01}, pp.~110-111, Pitts does not study the family-arrow structure of a type-category separately from its $\Sigma$-structure. In~\cite{petrakisCategoriesDependentArrows2023} examples of (fam, $\Sigma)$-categories without a terminal object are given.}.
\end{defi}

\begin{exas}\label{ex: famSigma}
(i) If $\C C$ is a category with constant families $\fHom(c) = \C C_0$ for every $c \in \C C$, then let $\Grothendieck{c}{d} := c$ and $\pr_1^{c,d} = 1_c$. The square
        \[ \begin{tikzcd}
            c \ar[r,"f"] \ar[d,"1_c"'] \pullback 
            & d \ar[d,"1_d"] 
            \\
            c \ar[r,"f"'] & d
        \end{tikzcd}\]
        is a pullback and conditions $(\Sigma_1, \Sigma_2)$ hold trivially.\\
(ii) If $P \in \fHom(\C C) = \Fun(\C C\opp,\Sets)$, the Sigma-object over $P$ is the category of elements $\Grothendieck{\C C}{P}$.\\
(iii) For the definition of the canonical Sigma-objects in a topos, see~\cite{Pi01}, p.~113.\\
(iv) The Sigma-object in the category of small types $\C U$ over a type $A$ in $\C U$ and a type family $P \colon A \to \C U$ is the dependent-pair type $\sum_{x \colon A}P(x)$ (see~\cite{hottbook}).
\end{exas}

%\begin{cor} 
%	If we are given an opfibration \( F \colon \C E \to \C B \) and we have a base \( B  \) on \(  \C B \), then we obtain a base \( B_F \) on \( \C E \) in the following manner:
%	\[
%		B_F(e) = \big\{ i \in \Mon(-,e) ~\vert~ i \text{ cartesian lift of } j \in B\big(F(e)\big)\big\}.
%	\]
%\end{cor}
%
%\begin{proof} 
%	Immediate from the preceding proposition as an opfibration \( F \colon \C E \to \C B \) is a fibration \( F \colon \C E\opp \to \C B\opp \).
%\end{proof}

\begin{lem}\label{lem: GrothendieckBase}
If \( P \in \fHom(\CB C) \), then the projection \( \pr_1^P \colon \big( \Grothendieck{\C C}{P}, (\pr_1^P)^{-1}(C)\big) \to \CB C \) is a computability transfer.
%\[
%	\pr_1^P \colon \Big( \Grothendieck{\C C}{P}, C_{\pr_1^P}\Big) \to \CB C.
%\]
\end{lem}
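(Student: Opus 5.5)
The plan is to reduce the statement to Proposition~\ref{prp: FibLiftBase} and Corollary~\ref{cor: FibLiftCompTrans}. The only genuine work is to check that the projection from the category of elements is a Grothendieck fibration.

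First, I will verify that \( \pr_1^P \colon \Grothendieck{\C C}{P} \to \C C \) is a Grothendieck fibration for a presheaf \( P \colon \C C\opp \to \Sets \). The objects of \( \Grothendieck{\C C}{P} \) are pairs \( (c,x) \) with \( x \in P(c) \). An arrow \( (c',x') \to (c,x) \) is an arrow \( f \colon c' \to c \) with \( P(f)(x) = x' \). Given \( f \colon c' \to c \) and an object \( (c,x) \) over \( c \), the candidate cartesian lift is \( f \colon (c',P(f)(x)) \to (c,x) \).

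To check that this lift is cartesian, take an arrow \( g \colon (d,y) \to (c,x) \) and a factorisation \( g = f \circ h \) in \( \C C \). Then
\[
P(h)\big(P(f)(x)\big) = P(f \circ h)(x) = P(g)(x) = y,
\]
so \( h \) is an arrow \( (d,y) \to (c',P(f)(x)) \). It is unique because \( \pr_1^P \) is faithful. This is the main step, though it is entirely routine. By Lemma~\ref{lem: weakcart} it would even suffice to check weak cartesianness.

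Next, Proposition~\ref{prp: FibLiftBase} shows that \( (\pr_1^P)^{-1}(C) \) is a base of computability on \( \Grothendieck{\C C}{P} \). Corollary~\ref{cor: FibLiftCompTrans} then shows that \( \pr_1^P \) is a computability transfer with respect to this lifted base. That is exactly the claim.

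Alternatively, one could check the two conditions directly. Every arrow of the base is cartesian over an arrow of \( C \), so it is mapped into \( C \). For pullbacks, the pullback in \( \Grothendieck{\C C}{P} \) of a cartesian arrow over \( i \in C \) along \( f \) is computed, as in Lemma~\ref{lem: fib}, by taking the pullback \( i^*(c'') \) in \( \C C \) with the element obtained by restricting along the projection. Hence \( \pr_1^P \) sends it to the chosen pullback square in \( \C C \). I expect no real obstacle here, since everything reduces to the fibration property.
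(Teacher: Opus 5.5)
Your proposal is correct and is the paper's argument. The paper's proof simply notes that $\pr_1^P$ is a Grothendieck fibration and applies Corollary~\ref{cor: FibLiftCompTrans}; your explicit construction of the cartesian lifts $f \colon (c',P(f)(x)) \to (c,x)$ fills in the step the paper takes for granted. One small caveat: your aside that Lemma~\ref{lem: weakcart} would let you check only weak cartesianness is circular, since that lemma already assumes $p$ is a fibration, but your main argument does not rely on it.
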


\begin{proof}
	We apply Corollary~\ref{cor: FibLiftCompTrans}, which is possible, since $\pr_1^{P}$ is a Grothendieck fibration.
\end{proof}

\begin{lem}\label{lem: terminal}
    \(\CatBaseComp\) has a terminal object.
\end{lem}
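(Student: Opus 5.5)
The natural candidate is the terminal category $\bbone$, with one object $0$ and only the arrow $1_0$, equipped with the identity base $I$ of Examples~\ref{ex:bases}, so $I(0)=\{1_0\}$. Since $1_0$ is the only arrow, this is also the isomorphism base and the partial base, so no other choice is possible. First I would check that $(\bbone, I)$ is an object of $\CatBaseComp$. Condition (\texttt{Base}\textsubscript{1}) is immediate. For (\texttt{Base}\textsubscript{2}), every cospan in $\bbone$ consists of identities, and the square with all four sides equal to $1_0$ is a pullback. Moreover $1_0\circ 1_0 = 1_0\in I(0)$.

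Next I would prove the 1-dimensional universal property. For any $\CB C$ there is exactly one functor $!\colon \C C\to\bbone$, since $\bbone$ is terminal in $\Cat$. It remains to show that $!$ is a computability transfer. Every $i\in C(c)$ is sent to $1_0\in I(0)$, so the base condition holds. Every pullback square in $\C C$, in particular those used in (\texttt{Base}\textsubscript{2}), is sent to the square of identities on $0$, which is a pullback in $\bbone$. Uniqueness follows from uniqueness in $\Cat$, because computability transfers are in particular functors.

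Since $\CatBaseComp$ is a 2-category, I would also record the 2-dimensional part. Between the unique 1-cell $!$ and itself there is exactly one natural transformation, namely the one with all components $1_0$. Hence $\CatBaseComp\big(\CB C,(\bbone,I)\big)$ is the terminal category, so $(\bbone,I)$ is terminal in the 2-categorical sense, which is also the weighted limit of the empty diagram. One could alternatively take this as the empty product from Lemma~\ref{lem: prod}.

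I do not expect a real obstacle. The only point needing care is that the empty product in Lemma~\ref{lem: prod} yields exactly $(\bbone,I)$, with empty product base $\{1_0\}$. I would mention this only as a consistency check and give the direct argument above as the proof.
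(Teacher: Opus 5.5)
Your proposal is correct and follows the paper's argument: take \(\bbone\) with the base containing only \(1_0\), and observe that the unique functor \(\C C \to \bbone\) from \(\Cat\) is trivially a computability transfer. The extra 2-dimensional check and the empty-product consistency remark are correct but go beyond what the paper records.
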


\begin{proof}
    A terminal object in $\CatBaseComp$ is given by the terminal category $\bbone$, and the base of computability on it contains only this identity arrow.
    It is immediate to show that this is a terminal object in $\CatBaseComp$, as for every other category with a base of computability there exists only one functor $!_{\C C} \colon \C C \to \bbone$, since $\bbone$ is the terminal object of $\Cat$, and this functor is, trivially, a computability transfer.
\end{proof}

\begin{thm}\label{thm: typecat}
 \( \CatBaseComp \) is a type-category.
 %(fam,$\Sigma$)-category.
\end{thm}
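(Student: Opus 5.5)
By Lemma~\ref{lem: terminal} it suffices to show that $\CatBaseComp$, with the fam-structure $\fHom(\CB C) = \{P \colon \C C\opp \to \Sets\}$, is a (fam, $\Sigma$)-category. The fam-axioms $(F_1)$ and $(F_2)$ hold at once, since composition of family arrows is composition of functors, which is unital and associative.

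For $P \in \fHom(\CB C)$ I take the Sigma-object
\[
\textstyle\sum_{\CB C} P := \big(\sum_{\C C} P, (\pr_1^P)^{-1}(C)\big),
\]
the category of elements with the lifted base of Proposition~\ref{prp: FibLiftBase}, and I take $\pr_1^{\CB C,P} := \pr_1^P$, which is a computability transfer by Lemma~\ref{lem: GrothendieckBase}. For a computability transfer $F \colon \CB D \to \CB C$ I define $\Sigma_P F \colon \sum_{\C D}(P \circ F) \to \sum_{\C C} P$ by $(d,x) \mapsto (F(d),x)$ and $f \mapsto F(f)$. This is well defined because an arrow $f \colon (d,x) \to (d',x')$ satisfies $P(F(f))(x') = x$. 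With this definition $(\Sigma_1)$ and $(\Sigma_2)$ hold strictly: $\Sigma_P 1 = 1$, and $\Sigma_P(F \circ G) = \Sigma_P F \circ \Sigma_{P \circ F} G$ holds on objects and arrows by functoriality of $F$.

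Next I check that $\Sigma_P F$ is a computability transfer. The cartesian arrows for the discrete fibration $\pr_1^P$ are exactly the arrows $f \colon (d,P(f)(x)) \to (d',x)$, and every such arrow is cartesian. Hence an element of $(\pr_1^{P\circ F})^{-1}(D)$ is an arrow over some $i \in D(d')$, and its image lies over $F(i) \in C(F(d'))$, so it lies in the lifted base. For preservation of the relevant pullbacks, I use that, by the proofs of Lemma~\ref{lem: fib} and Corollary~\ref{cor: FibLiftCompTrans}, a (\texttt{Base}\textsubscript{2})-pullback in $\sum_{\C D}(P\circ F)$ lies over a (\texttt{Base}\textsubscript{2})-pullback in $\C D$. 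Since $F$ preserves that square, I lift it back along the discrete fibration $\pr_1^P$ via Lemma~\ref{lem: fib}.

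It then remains to prove that the square with sides $\Sigma_P F$, $\pr_1^{P \circ F}$, $\pr_1^P$ and $F$ is a pullback in $\CatBaseComp$. I expect this to be the main obstacle, because pullbacks do not exist in general there. My first attempt is to invoke Theorem~\ref{thm: PullbackFibLift} with $p = \pr_1^P$. In $\Cat$ the pullback of $\pr_1^P$ along $F$ is canonically isomorphic to $\sum_{\C D}(P \circ F)$, with objects $(d,(c,x))$ satisfying $F(d) = c$, which correspond to $(d,x)$. Under this isomorphism the lifted base $(\pr_1^{P\circ F})^{-1}(D)$ matches $(F^*p)^{-1}(D)$. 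If the theorem's hypothesis must hold on the nose rather than up to isomorphism, I will instead verify the universal property directly. Given $G \colon \CB G \to \CB D$ and $H \colon \CB G \to \sum_{\C C}P$ with $F \circ G = \pr_1^P \circ H$, the mediating functor is $g \mapsto (G(g), \pi_2 H(g))$. It is unique because the category of elements is a strict pullback in $\Cat$. It is a computability transfer because base arrows go to arrows over $G(i) \in D$, which are automatically cartesian in a discrete fibration, and pullback preservation follows from Corollary~\ref{cor: FibLiftCompTrans} together with the fact that cartesian squares are determined by their images.
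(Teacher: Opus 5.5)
Your proposal is correct and follows essentially the paper's proof: the terminal object from Lemma~\ref{lem: terminal}, the categories of elements with the lifted bases as Sigma-objects, and Theorem~\ref{thm: PullbackFibLift} applied to the pullback in $\Cat$ of $\pr_1^P$ along $F$, with $(\Sigma_1)$ and $(\Sigma_2)$ inherited from $\Cat$. Your worry about needing that pullback on the nose is unnecessary, since the lifted base is defined intrinsically through cartesian arrows and is therefore transported along any isomorphism of categories over $\C D$. Your separate check that $\Grothendieck{P}{F}$ is a computability transfer is already part of that theorem's conclusion, so it is redundant but harmless.
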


\begin{proof}
By Lemma~\ref{lem: terminal}, it suffices to show that \( \CatBaseComp \) is a (fam, $\Sigma$)-category. 
To show that the following rectangle is a pullback   
\[ \begin{tikzcd}
	\Big( \Grothendieck{\C D}{P \circ F}, (\pr_1^{P \circ F})^{-1}(D)\Big) \ar[r,"\Grothendieck{P}{F}"]  \ar[d,"\pr_1^{P \circ F}"']
	& \Big( \Grothendieck{\C C}{P}, (\pr_1^P)^{-1}(C)\Big) \ar[d,"\pr_1^P"] 
	\\
	\CB D \ar[r,"F"'] & \CB C,
\end{tikzcd} \]
we simply remark that the following rectangle is a pullback 
 \[ \begin{tikzcd}
	 \Grothendieck{\C D}{P \circ F} \ar[d,"\pr_1^{P \circ F}"'] \ar[r,"\Grothendieck{P}{F}"] 
	 & \Grothendieck{\C C}{P} \ar[d,"\pr_1^P"]
	 \\
	 \C D \ar[r,"F"'] & \C C,
 \end{tikzcd} \]
% is a pullback square in \( \Cat \), 
%and we use Theorem~\ref{thm: PullbackFibLift}. 
and we use Theorem~\ref{thm: PullbackFibLift}.
The strictness conditions are inherited from the strictness conditions in \( \Cat \).
\end{proof}

\section{\texorpdfstring{$\CatBaseComp$}{CatBaseComp} is a (2-fam, \texorpdfstring{\( \Sigma \)}{Sigma})-category}
\label{sec: 2famS}
%\section{$\CatBaseComp$ is a (2-fam, \( \Sigma \))-category}

In this section we show that \( \CatBaseComp \) is a (2-fam, \( \Sigma \))-category (Proposition~\ref{prp: 2famSigma}), a 2-categorical generalisation of a (fam, $\Sigma$)-category, studied by Ehrhardt in~\cite{ehrhardt2depCategories2024}.
In~\cite[Chapter~3]{Ga26} it is shown that (2-fam, $\Sigma$)-categories correspond to \emph{split comprehension categories}.

\begin{defi}\label{def: 2fam}
A fam-category $\C C $ is a 2-fam-\textit{category}, if for every $c \in \C C$ the collection $\fHom(c)$ is a category 
%whose objects are the family arrows and 
whose morphisms are called \emph{2-family arrows}.        
A 2-family arrow $\eta \in \Hom(\lambda, \mu)$ is pictured as follows: 
\[ \begin{tikzcd}
c \ar[r,"\lambda"{name = U}, bend left = 40] \ar[r,"\mu"'{name = V}, bend right = 40]
\ar[from = U, to = V, Rightarrow,"\eta"] 
&\phantom{\cdot}.
\end{tikzcd}\]
Moreover, for every $c,c' \in \C C,\lambda,\mu \in \fHom(c')$ there is an operation $\bullet_{c,c'} $ assigning to $\eta \in \Hom(\lambda,\mu)$ and $f \colon c \to c'$ the 2-family-arrow $\eta \bullet_{c,c'} f \in \Hom(\lambda \circ f, \mu \circ f)$, such that the following conditions hold:\\[1mm] 
\emph{Compatibility:} For every $c,c',c'' \in \C C$, $\lambda,\mu \in \fHom(c'')$, $\eta \in \Hom(\lambda,\mu)$ and every $f \in \Hom(c,c'), g \in \Hom(c',c'')$ we have that 
\renewcommand{\arraystretch}{6}
        \\
        \begin{tabular}{lp{0.8\textwidth}}
       \parbox{0.3\textwidth}{ $\eta \bullet_{c,c} 1_c = \eta $ }
        & 
        $\begin{tikzcd}
    c \ar[r,"1_c"] 
    \ar[rr,"\lambda"{name = U, near end,swap},  to path = 
    {(\tikztostart.north) |- ([yshift = 2em]\tikztotarget.north) \tikztonodes -- (\tikztotarget.north) }, rounded corners]
    \ar[rr,"\mu"'{name = V,near end,swap},to path = 
    {(\tikztostart.south) |- ([yshift = -2em]\tikztotarget.south) \tikztonodes -- (\tikztotarget.south) }, rounded corners]
    & c \ar[r,"\lambda"{name = U2}, bend left = 40] \ar[r,"\mu"'{name = V2},bend right = 40] \ar[from = U2, to = V2, Rightarrow,"\eta"]
    & \phantom{\cdot} 
    \ar[from = U, to = V,"\eta \bullet_{c,c} 1_c"'{near end}, crossing over,Rightarrow,to path =
    { (\tikztostart.north) -- ([yshift = 1em]\tikztostart.north) -| ([xshift = -5em,yshift = -1em]\tikztotarget.south) \tikztonodes -|(\tikztotarget.south)}, rounded corners] 
    \end{tikzcd}$
\end{tabular}\\
\begin{tabular}{lp{0.8\textwidth}}
\parbox{0.3\textwidth}{$\eta \bullet_{c,c''} (g \circ f) $\\
$= (\eta \bullet_{c',c''} g) \bullet_{c,c'} f $ } & $
\begin{tikzcd}
    c \ar[r,"f"] \ar[rrr,"\lambda \circ g \circ f"'{name = U2, near end}, 
    to path = {(\tikztostart.north) |- ([yshift = 2em]\tikztotarget.north) \tikztonodes -- (\tikztotarget.north) }, rounded corners]
    \ar[rrr,"\mu \circ g \circ f"{name = V2,near end}, to path = {(\tikztostart.south) |- ([yshift = -2em]\tikztotarget.south) \tikztonodes -- (\tikztotarget.south) }, rounded corners]
    & c' \ar[r,"g"]
    & |[text height = 4pt]| c'' 
    \ar[r,"\lambda"{name = U}, bend left = 40, start anchor = {north east}] 
    \ar[r,"\mu"'{name = V}, bend right = 40, start anchor = {south east}]
    &  \phantom{\cdot}.
    \ar[from = U, to = V, Rightarrow, "\eta"]
    \ar[from = U2, to = V2, Rightarrow,"\eta \bullet_{c'',c} (g \circ f)"{near end},
    to path = {(\tikztostart.north) -- ([yshift = 1em]\tikztostart.north)-| ([xshift = 8em,yshift = -1em]\tikztotarget.south) \tikztonodes -| (\tikztotarget.south) }, rounded corners] 
\end{tikzcd} $
\end{tabular}\vspace{1em}
        \item\emph{Distributivity:}
        For every $c,c' \in \C C$, $\lambda,\delta,\gamma \in \fHom(c')$, $\eta\in \Hom(\delta,\gamma), \eta' \in \Hom(\gamma,\lambda)$, and every $f \in \Hom(c,c')$ we have that 
        \[ (\eta' \circ \eta) \bullet_{c,c'} f = (\eta' \bullet_{c,c'} f) \circ (\eta \bullet_{c,c'} f)\qquad\begin{tikzcd}
    c \ar[r,"f"] 
    & |[text height = 5pt]| c' \ar[r,"\lambda"{name = U, near end},  to path = {(\tikztostart.north) |- ([yshift = 2em]\tikztotarget.north) \tikztonodes -- (\tikztotarget.north) }, rounded corners]
    \ar[r,"\delta"{name = V,description}]
    \ar[r,"\gamma"'{name = W, near end}, to path = {(\tikztostart.south) |- ([yshift = -2em]\tikztotarget.south) \tikztonodes -- (\tikztotarget.south) }, rounded corners]
    & \none.
    \ar[from = U, to = W, "\eta"{near start}, Rightarrow, shorten >= 35pt]
    \ar[from = U, to = W, "\eta'"{near end}, Rightarrow, shorten <= 35pt]
    %\ar[from = V, to = W, "\eta'", Rightarrow]
\end{tikzcd}\]
\end{defi}

\begin{rem}\label{rem: omit}
     Usually, we omit the indices in $\bullet_{c,c'}$ and we only write $\bullet$. 
    The operation $\bullet$ is called ``horizontal composition''.
\end{rem}

\begin{exas}\label{ex: 2famarrows}
(i) If $M$ is a monoid, and $\C C$ is a fam-category, we define a 2-fam-structure on $\C C$ by letting $\Hom(\lambda, \mu) := M$ for $\lambda,\mu \in \fHom(c)$ and $c \in \C C$. 
Compositions $\bullet$ are defined by the rule $m \bullet f := m$. The compatibility and distributivity properties are immediate to show. \\
(ii) All fam-categories in Examples~\ref{ex: famarrows} have their 2-analogue (see~\cite[$\S$3.1]{ehrhardt2depCategories2024}). 
\end{exas}

Next we define the Sigma-structure that corresponds to a 2-fam-category.

\begin{defi}\label{def: 2famSigma}
    A 2-fam-category $\C C$ is a (2-fam, $\Sigma$)-\emph{category} if the underlying fam-category has a (fam,$\Sigma$)-structure and for every $c \in \C C$, $\lambda,\mu \in \fHom(c)$ and every $\eta \in \Hom(\lambda,\mu)$ there is an arrow 
    \[ \sum_{\lambda,\mu} \eta \colon \sum_{c} \lambda \to \sum_{c} \mu, \]
    such that the following diagrams commute 
    \[ \begin{tikzcd}
        \sum_{c} \lambda \circ f 
        \ar[r,"\sum_{\lambda} f"]
        \ar[d,"\sum_{\lambda \circ f,\mu \circ f} \eta\bullet f"']
        \ar[dd, to path = {
        (\tikztostart.west) -- ([xshift = -5.5em]\tikztostart.west) |- (\tikztotarget.west) \tikztonodes 
        }, "\pr_1^{c,\lambda \circ f}"'{near start},rounded corners]
        &[2em] \sum_{c'} \lambda 
        \ar[d,"\sum_{\lambda,\mu} f"]
        \ar[dd, to path = {
        (\tikztostart.east) -- ([xshift = 5.5em]\tikztostart.east) |- (\tikztotarget.east) \tikztonodes }, "\pr_1^{c',\lambda}"{near start},rounded corners] 
        \\
        \sum_{c} \mu \circ f 
        \ar[r,"\sum_\mu f"'] 
        \ar[d,"\pr_1^{c,\mu \circ f}"']
        & \sum_{c'} \mu \ar[d,"\pr_1^{c',\mu}"]
        \\
         c \ar[,r,"f"'] & c'
    \end{tikzcd}\]
Moreover, the following strictness conditions hold:
    \[ \sum_{\lambda,\lambda} 1_\lambda = 1_{\sum_{c} \lambda} \qquad\hbox{and}\qquad 
    \sum_{\lambda,\mu} \eta \circ \sum_{\nu,\lambda} \eta' = \sum_{\nu,\mu} (\eta \circ \eta'). 
    \]
\end{defi}
\begin{prop}\label{prp: 2famSigma}
	The category \( \CatBaseComp \) is a (2-fam, \( \Sigma \))-category in the following way:
	the 2-family-arrows are natural transformations and the horizontal composition is the horizontal composition of a functor and a natural transformation.
	The arrows 
	\[
		\Grothendieck{P,S}{\eta} \colon \Grothendieck{\CB C}{P} \to \Grothendieck{\CB C}{S}
	\]
	send \( (c,y) \) to \( \big(c,\eta_c(y)\big) \).
\end{prop}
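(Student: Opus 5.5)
The proof has four parts: the 2-fam structure, the new $\Sigma$-arrows, the compatibility diagram, and the strictness laws. The (fam, $\Sigma$)-structure is already available from Theorem~\ref{thm: typecat}, so only the 2-dimensional data is new.

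\textbf{2-fam structure.} For $\CB C$, let $\fHom(\CB C)$ be the category $\Fun(\C C\opp,\Sets)$, whose morphisms are natural transformations. For $\eta\colon P\To S$ and a computability transfer $F\colon\CB D\to\CB C$, set $\eta\bullet F := \eta\ast F$, the whiskering with components $(\eta\ast F)_d=\eta_{F(d)}$. The axioms then reduce to standard identities about whiskering:
\begin{itemize}
\item compatibility: $\eta\ast 1_{\C C}=\eta$ and $\eta\ast(F\circ G)=(\eta\ast F)\ast G$;
\item distributivity: $(\eta'\circ\eta)\ast F=(\eta'\ast F)\circ(\eta\ast F)$.
\end{itemize}
Each of these is checked componentwise.

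\textbf{The arrows $\Grothendieck{P,S}{\eta}$.} Define the functor on the categories of elements by $(c,y)\mapsto(c,\eta_c(y))$ on objects. An arrow $f\colon(c,y)\to(c',y')$ in $\Grothendieck{\C C}{P}$ is $f\colon c\to c'$ with $P(f)(y')=y$. It is sent to the same underlying $f$, which is legitimate because naturality of $\eta$ gives
\[
S(f)(\eta_{c'}(y'))=\eta_c(P(f)(y'))=\eta_c(y).
\]
Functoriality is immediate, since the functor is the identity on underlying arrows. Both projections satisfy $\pr_1^S\circ\Grothendieck{P,S}{\eta}=\pr_1^P$.

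\textbf{Computability transfer.} This is the step needing care. The base on $\Grothendieck{\C C}{P}$ is $(\pr_1^P)^{-1}(C)$: cartesian lifts of arrows in $C$. Since $P$ is a presheaf, every arrow $f\colon(c,P(f)(y'))\to(c',y')$ is $\pr_1^P$-cartesian, so the base consists exactly of those arrows whose underlying arrow lies in $C(c')$. Cartesianness is preserved by $\Grothendieck{P,S}{\eta}$ for the same reason: every arrow of $\Grothendieck{\C C}{S}$ is cartesian. So $\Grothendieck{P,S}{\eta}$ maps base arrows to base arrows.

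For the pullbacks in (\texttt{Base}\textsubscript{2}), the plan is as follows.
\begin{enumerate}
\item Use the description from Lemma~\ref{lem: fib}: the pullback of a cartesian $i\colon(c_2,y_2)\to(c,y)$ along $g\colon(c_1,y_1)\to(c,y)$ is the pullback $c_1\times_c c_2$ in $\C C$, carrying the element $P(g^*i)(y_1)$.
\item Its image under $\Grothendieck{P,S}{\eta}$ has the same underlying square, carrying the element $\eta(P(g^*i)(y_1)) = S(g^*i)(\eta_{c_1}(y_1))$ by naturality.
\item This is precisely the canonical pullback of the image cospan in $\Grothendieck{\C C}{S}$ given by Lemma~\ref{lem: fib}, so pullbacks are preserved.
\end{enumerate}
Since Corollary~\ref{cor: FibLiftCompTrans} only addresses $\pr_1$, I expect this to be the main obstacle. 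I would handle it by verifying the universal property directly: a cone in $\Grothendieck{\C C}{S}$ projects to a cone in $\C C$, which factors uniquely through $c_1\times_c c_2$, and the element condition on the factorization follows from the cone's element conditions.

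\textbf{Compatibility diagram and strictness.} For $F\colon\CB D\to\CB C$, $\Grothendieck{P}{F}$ sends $(d,y)\mapsto(F(d),y)$. Both composites in the required square therefore send $(d,y)$ to $(F(d),\eta_{F(d)}(y))$ and act as $F$ on arrows, so the square commutes; the lower square is the pullback from Theorem~\ref{thm: typecat}. Strictness is componentwise:
\[
\Grothendieck{P,P}{1_P}=1
\qquad\text{and}\qquad
\Grothendieck{S,T}{\eta}\circ\Grothendieck{P,S}{\eta'}=\Grothendieck{P,T}{(\eta\circ\eta')},
\]
since on objects both sides send $(c,y)$ to $(c,\eta_c(\eta'_c(y)))$.
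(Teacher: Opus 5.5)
Your proposal is correct and follows the same route as the paper. The 2-fam axioms reduce to standard whiskering identities. $\Grothendieck{P,S}{\eta}$ is the identity on underlying arrows, so it sends base arrows (those lying over $C$) to base arrows and commutes with the first projections. The strictness laws are checked componentwise.

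You actually do more than the paper, whose proof checks only the triangle over $\pr_1$ and is silent on two further points. First, it does not check preservation of the pullbacks from (\texttt{Base}\textsubscript{2}); your naturality argument settles this by identifying the image of the canonical pullback of Lemma~\ref{lem: fib} with the canonical pullback in $\Grothendieck{\C C}{S}$. Second, it does not check that the upper square involving $\Grothendieck{P}{F}$ commutes, which you verify.
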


\begin{proof} 
It is immediate to show that with this definition of 2-fam-arrows compatibility and distributivity are satisfied, hence, $\CatBaseComp$ is a 2-fam-category. Next, we show that \( \Grothendieck{P,S}{\eta} \) is a computability transfer.
	If \( i \colon (c,y) \to (c',y') \in \Grothendieck{\C C}{C}(c',y')\), then by definition $i \colon c \to c' \in C(c')$, so $\Grothendieck{P,S}{\eta}(i) = i \in \Grothendieck{\C C}{C(c',\eta_{c'}(y)}$.
Since \(\pr_1^P(c,y) = c = \pr_1^S(c,\eta_c(y)) = \pr_1^S\Big(\Grothendieck{P,S}{\eta}(c,y)\Big)\), the following diagram commutes
%	The commutativity of the following triangle
	\[ \begin{tikzcd}
		\Grothendieck{\CB C}{P} \ar[rr,"\Grothendieck{P,S}{\eta}"] 
		\ar[dr,"\pr_1^P"'] &&
		\Grothendieck{\CB C}{S} 
		\ar[dl,"\pr_1^S"]
		\\
				  & \CB C,
	\end{tikzcd}  \]
hence, $\Grothendieck{P,S}{\eta}$ is well-defined, if $\eta \colon P \To S$.
    The strictness conditions are shown as follows:
    \begin{align*}
        &\Grothendieck{P,P}{1_P}(c,x) = (c,(1_P)_c(x)) = (c,x) = 1_{\Grothendieck{\C C}{P}}(c,x), &&\Grothendieck{P,P}{1_P}(i) = i = 1_{\Grothendieck{\C C}{P}}(i),
        \\
        &\Grothendieck{P,T}{\mu \circ \eta}(c,x) = \big(c,\mu_c(\eta_c(x))\big) = \Grothendieck{S,T}{\mu}\Big(\Grothendieck{P,S}{\eta}(c,x)\Big),
        &&\Grothendieck{P,T}{\mu \circ \eta}(i) = i = \Grothendieck{S,T}{\mu}\Big(\Grothendieck{P,S}{\eta}(i)\Big). \ \ \ \ \qedhere
    \end{align*}
%	can be computed via 
%		\pr_1^P(c,y) = c = \pr_1^S(c,\eta_c(y)) = \pr_1^S\Big(\Grothendieck{P,S}{\eta}(c,y)\Big).
%	\end{align*}
\end{proof}

\section{\texorpdfstring{\( \CatBaseComp \) is a (2-dep, $\Sigma$)-category}{CatBaseComp is a (2-dep,Sigma)-category}}
\label{sec: 2depS}

A dependent arrow is an abstract categorical formulation of the type-theoretic notion of a dependent function. The axioms of a category endowed with a dependent-arrow structure capture the properties of composition of a dependent function with a function, exactly as the axioms of a category capture the properties of compositions of functions.
In \cite{petrakisCategoriesDependentArrows2023} it is shown that a (fam, $\Sigma$)-category can be equipped with a canonical dependent arrow-structure, turning it into a (dep, $\Sigma$)-category.
In section~\ref{sec: 2depS} we present the notion of a (2-dep, $\Sigma$)-category, and describe the dependent arrows obtained by the canonical construction, applied to the (2-fam, $\Sigma$)-structure on \( \CatBaseComp \), presented in section~\ref{sec: 2famS}.
 In~\cite[Chapter~7]{Ga26} it is shown that (2-dep, $\Sigma$)-categories correspond to the introduced \emph{higher comprehension categories}.

\begin{defi}\label{def: depSigma}
	Let \( \C C \) be a (fam, $\Sigma$)-category. 
A \emph{dep-structure} on \( \C C \) consists of a collection \( \dHom(c,\lambda) \) for every object \( c \) of \( \C C \) and every \( \lambda \in \fHom(c)\), together with a composition operation \( \circ \) that maps \( \Phi \in \dHom(c,\lambda) \) and \( f \in \Hom(c',c) \) to \( \Phi \circ f \in \dHom(c',\lambda \circ f) \), and a \emph{second-projection-arrow} \( \pr_2 \in \fHom(\lambda \circ \pr_1^{\lambda}) \), for every family arrow \( \lambda \), such that the following conditions hold:\\[1mm]
$(D_1)$ \( \Phi \circ (g \circ f) = (\Phi \circ g) \circ f \) and \( \Phi \circ 1_c = \Phi \), for every \( \Phi,g,f \).
\[ 	 \begin{tikzcd}
		c'' \ar[r,"f"] & c' \ar[r,"g"] & c \ar[r,"\lambda"{near end, name = L}]
			       & \none
		       \ar[from = 1-2,to = L, "{\Phi\circ g}", bend left = 30,Rightarrow]
		       \ar[from = 1-1, to = L, "{\Phi\circ (g\circ f)}"',end anchor = {south}, bend right = 30, shorten >= 2pt, Rightarrow]
	\end{tikzcd}\quad\hbox{and}\quad\begin{tikzcd}
		c \ar[r,"1_c"] & c \ar[r,"\lambda"{near end, name = L}]
			       & \none
			       \ar[from = 1-1, to = L,"\Phi", to path = 
			       {(\tikztostart.north)|- ([yshift = 1em]\tikztotarget.north) \tikztonodes -- (\tikztotarget.north)}, rounded corners,Rightarrow]
			       \ar[from = 1-2,to = L, "\Phi", bend left = 30,Rightarrow]
	\end{tikzcd} 
 \]
	\\[1mm]
$(D_2)$ \( \pr_2^{\lambda \circ f} = \pr_2^{\lambda} \circ \Grothendieck{\lambda}{f} \), for every \( \lambda \in \fHom(c)	\) and \( f \colon c' \to c \).
\[ \begin{tikzcd}
			\Grothendieck{a}{\lambda \circ f} \ar[rr,"\Grothendieck{\lambda}{f}"] 
			\ar[dr,"\lambda \circ f \circ \pr_1^{\lambda \circ f}"'{name = U}] 
			&& \Grothendieck{b}{\lambda} \ar[dl,"\lambda \circ \pr_1^\lambda"{name = V}]
			\\
			& {\cdot}
			\ar[from = 1-3, to = V,"\pr_2^{\lambda}",start anchor = {[xshift = -7pt]south east}, bend left = 30,Rightarrow]
			\ar[from = 1-1, to = U,"\pr_2^{\lambda\circ f}"', start anchor = {[xshift = 7pt]south west},bend right = 30, Rightarrow]
		\end{tikzcd} \]
        \\[1mm]
We call a (fam, $\Sigma$)-category together with a dependent arrow-structure a (dep, $\Sigma$)\emph{-category}.
\end{defi}

As shown in~\cite{petrakisCategoriesDependentArrows2023}, every (fam, $\Sigma$)-category is turned into a (dep,$\Sigma$)-category by defining the dependent arrows as the sections of the first-projection-arrow
\[ \begin{tikzcd}
	c \ar[r,"\Phi"] \ar[dr,"1_c"']  & \Grothendieck{c}{\lambda} \ar[d,"\pr_1^\lambda"]
	\\
					& c.
\end{tikzcd} \]
The second-projection-arrow and the composition are defined via pullbacks as follows:
\[ \begin{tikzcd}
	\Grothendieck{c}{\lambda} \ar[drr,"\Phi \circ \pr_1^\lambda"{near start},to path = { (\tikztostart.east) -| (\tikztotarget.north)\tikztonodes}, rounded corners]
	\ar[ddr,"1_{\Grothendieck{c}{\lambda}}"{swap, near start},to path = { (\tikztostart.south) |- (\tikztotarget.west) \tikztonodes}, rounded corners]
	\ar[dr,"\pr_2^{\lambda}"{description},dotted]
	\\
	& \Grothendieck{\Grothendieck{c}{\lambda}}{\lambda \circ \pr_1^\lambda}\ar[d,"\pr_1^{\lambda \circ \pr_1^\lambda}"']
	\ar[r,"\pr_1^{\lambda \circ \pr_1^{\lambda}}"] & \Grothendieck{c}{\lambda} \ar[d,"\pr_1^{\lambda}"]
	\\
						       &\Grothendieck{c}{\lambda}\ar[r,"\pr_1^\lambda"']  & c 
\end{tikzcd}\hbox{ and } \begin{tikzcd} 
%	c'' \ar[r] \ar[ddr,to path = {(\tikztostart.south) |- (\tikztotarget.west) \tikztonodes}, rounded corners]
%	\ar[dr,"\Phi \circ (f \circ g)"{description}]
	 c' \ar[drr,"\Phi \circ f"{near start}, to path = { (\tikztostart.east) -| (\tikztotarget.north) \tikztonodes}, rounded corners]
	 \ar[ddr,"1_{c'}"{swap, near start},to path = { (\tikztostart.south) |- (\tikztotarget.west) \tikztonodes}, rounded corners]
	\ar[dr,"\Phi \circ f"{description},dotted]
	\\
%	&\Grothendieck{c''}{\lambda \circ (f \circ g)} \ar[d,"\pr_1^{\lambda \circ (f \circ g)}"'] \ar[r,"\Grothendieck{\lambda \circ f}{g}"] 
	&\Grothendieck{c'}{\lambda \circ f} \ar[d,"\pr_1^{\lambda \circ f}"'] \ar[r,"\Grothendieck{\lambda}{f}"]
	& \Grothendieck{c}{\lambda} \ar[d,"\pr_1^{\lambda}"]
	\\
%	&c'' \ar[r,"g"]
	& c' \ar[r,"f"'] & c.
\end{tikzcd}
\]
Thus, we can endow \( \CatBaseComp \) with dependent arrows that are computability transfers \( \Phi \colon \CB C \to \big(\Grothendieck{\C C}{F}, \Grothendieck{\C C}{C}\big) \).
In \cite{ehrhardt2depCategories2024} it is shown that the above construction can be extended to (2-fam, $\Sigma$)-categories, to yield (2-dep, $\Sigma$)-categories. 

\begin{defi}\label{def: 2depSigma}
	If \( \C C \) is a (dep, $\Sigma$)-category with a 2-family-arrow-structure, then \( \C C  \) is a \emph{(2-dep, $\Sigma$)-category} if we can form the post-composition \( \eta \circ \Phi \in \dHom(c,\nu) \), for every \( \Phi \in \dHom(c,\lambda)\) and \(\eta \colon \lambda \To \nu \), such that the following conditions hold:\\[1mm]
    $(2D_1)$ \( 1_\lambda \circ \Phi = \Phi \) and \( (\theta \circ \eta) \circ \Phi = \theta \circ (\eta \circ \Phi) \), for \( \Phi \in \dHom(c,\lambda), \eta \colon \lambda \To \mu, \theta \colon \mu \To \nu \).\\[1mm]
    $(2D_2)$ \((\eta \circ \pr_1^{\lambda}) \circ  \pr_2^{\lambda} =\pr_2^\mu \circ \Grothendieck{\lambda,\mu}{\eta}\), for \( \eta \colon \lambda \To \mu \).\\[1mm]
    $(2D_3)$ \( (\eta \circ \Phi) \circ f = (\eta \bullet f) \circ (\Phi \circ f)\) for all $\eta \colon \lambda \To \mu, \Phi \in \dHom(c,\lambda)$ and $f \colon c' \to c$. 
\end{defi}

The post-composition for the canonical dep-structure is defined by the rule
\[
	\eta \circ \Phi := \Grothendieck{\lambda,\mu}{\eta \circ \Phi}.
\]
Note that the compositions on either side of the above equality are different! The canonical dependent arrows in $\CatBaseComp$ 
%the computability transfers obtained in this way 
share the following interesting property.

\begin{lem}\label{lem: interesting}
    If $\Phi,\Psi$ are canonical dependent arrows in $\CatBaseComp$, such that $\Phi(c) = \Psi(c)$, for every object $c$ of $\C C$,then 
    \[ \dom(\Phi(f)) = \dom(\Psi(f)),\]
    for every arrow $f$ in the base of computability $C$ in $\C C$.
\end{lem}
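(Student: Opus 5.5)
The plan is to unfold what a canonical dependent arrow is in $\CatBaseComp$ and read off the domain of $\Phi(f)$ from the structure of the category of elements. A canonical dependent arrow $\Phi \in \dHom(\CB C, P)$ is a computability transfer
\[\Phi \colon \CB C \to \Big(\Grothendieck{\C C}{P}, (\pr_1^P)^{-1}(C)\Big)\]
with $\pr_1^P \circ \Phi = 1_{\C C}$. So for every object $c$ we have $\Phi(c) = (c, x_c)$ for some $x_c \in P(c)$. For every arrow $f \colon c' \to c$ we have $\pr_1^P(\Phi(f)) = f$, so $\Phi(f)$ is an arrow $(c', x_{c'}) \to (c, x_c)$ in $\Grothendieck{\C C}{P}$ lying over $f$.

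The key observation is that an arrow of a category of elements is determined by its source, its target and its underlying arrow in $\C C$. For the contravariant $P$, an arrow $(c',y') \to (c,y)$ is an arrow $f \colon c' \to c$ with $P(f)(y) = y'$, and its source is forced to be $(c', P(f)(y))$. Hence $\Phi(f)$ is the unique arrow over $f$ with codomain $\Phi(c)$, and its domain is $(c', P(f)(x_c))$.

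Now let $\Phi(c) = \Psi(c)$ for all objects $c$. For $f \colon c' \to c$ with $f \in C(c)$, both $\Phi(f)$ and $\Psi(f)$ lie over $f$ and have codomain $\Phi(c) = \Psi(c) = (c, x_c)$. Their domains are therefore both $(c', P(f)(x_c))$, which gives
\[\dom(\Phi(f)) = \dom(\Psi(f)).\]
Equivalently, $\dom(\Phi(f)) = \Phi(c') = \Psi(c') = \dom(\Psi(f))$ by functoriality.

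This reasoning does not seem to need the hypothesis $f \in C(c)$. I would still mention where it naturally enters: $\Phi(f)$ then lies in $(\pr_1^P)^{-1}(C)(\Phi(c))$, i.e.\ it is the cartesian lift of $f$ at $\Phi(c)$. That lift is unique in a split discrete fibration, which again pins down the domain.

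The only delicate step is fixing the variance convention of $\Grothendieck{\C C}{P}$ consistently with the paper's Proposition on $\Grothendieck{P,S}{\eta}$. If the opposite convention were intended (arrows $(c',y') \to (c,y)$ with $P(f)(y') = y$), I would instead determine the codomain from the domain. I would then argue via the cartesian-lift description for $f \in C(c)$, where the hypothesis is genuinely used.
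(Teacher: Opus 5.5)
Your proof is correct, and your one-line argument is more elementary than the paper's. The paper works through the computability structure. Since $f \in C(c)$ and $\Phi,\Psi$ are computability transfers into $\big(\Grothendieck{\C C}{P}, (\pr_1^P)^{-1}(C)\big)$, both $\Phi(f)$ and $\Psi(f)$ are cartesian lifts of $f$ at the common object $(c,e)$. Uniqueness of such lifts then gives the stronger conclusion $\Phi(f) = \Psi(f) = \big(f \colon (\dom(f), P(f)(e)) \to (c,e)\big)$.

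Your observation $\dom(\Phi(f)) = \Phi(c') = \Psi(c') = \dom(\Psi(f))$ uses only that $\Phi,\Psi$ are functors that agree on objects. It needs neither $f \in C(c)$, nor the transfer condition, nor even that $\Phi,\Psi$ are sections of $\pr_1^P$. It also does not depend on the variance convention, so the fallback in your last paragraph is unnecessary. There is no convention under which the hypothesis $f \in C(c)$ becomes ``genuinely used'' for the domain statement. In any case, the paper fixes presheaves $\C C\opp \to \Sets$, which matches your convention.

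Your discrete-fibration argument, that an arrow of $\Grothendieck{\C C}{P}$ is determined by its codomain and its underlying arrow, recovers the paper's stronger conclusion $\Phi(f)=\Psi(f)$, and does so for every arrow $f$, not only those in $C$. The paper's route reads the lemma in terms of the lifted base: arrows of $C$ are sent to cartesian lifts. Yours shows that, as stated, the lemma follows formally from functoriality. That fits the paper's own subsequent remark that the choice of base plays no role.
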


\begin{proof}
    Let $c$ in $\C C$ and $e \in P(c)$, such that $\Phi(c) = (c,e) = \Psi(c)$.
    Since both $\Phi(f)$ and $\Psi(f)$ must be cartesian for $f$, where $f \in C(c)$, we get 
    \[\Phi(f) = \Psi(f) = (f \colon \big((\dom(f),P(f)(e)) \to (c,e)\big).\qedhere\]
\end{proof}

\begin{rem}
    Note that the above result does not rely on the choice of our base of computability. Hence, the canonical arrows in $\Cat$ satisfy this property.
\end{rem}

%\section{Transporting bases of computability}\label{sec: transport}
%\input{Transporting_bases_of_computability}

%\section{Canonical computability models}\label{sec: canonical}
%\input{Canonical_computability_models}

%\section{The category of assemblies}

%\input{Assemblies}

%\section{Computability models and the Grothendieck construction}

%\input{Grothendieck}

\section{Concluding comments}\label{sec: concl}

In this paper we defined and studied the category $\CatBaseComp$ of categories with a base of computability and computability transfers.
Categories with a base of computability were introduced in~\cite{Pe22}, in order to generate computability models in a canonical way.
Having defined here the appropriate notion of arrow between these categories, one can induce from a computability transfer between two appropriate categories with a base of computability a simulation between the associated computability models. Specifically, if 
$(\C{C},C)$ and $(\C{D},D)$ are categories with a base of computability, and if $S \colon \mathscr{C} \to \Sets$,
$R \colon \mathscr{D} \to \Sets$ are pullback-preserving presheaves, let  $F \colon (\C{C},C) \to (\C{D},D)$ be a computability transfer, such that the following triangle commutes
\[ \begin{tikzcd}
\mathscr{C} \ar[rr,"F"] \ar[dr,"S"'] && \mathscr{D} \ar[dl,"R"] 
\\
& \Sets.
\end{tikzcd} \]
Following the definition of a partial computability model in Example~\ref{ex: base}, there is a simulation $\pmb{\gamma} \colon \mathbf{CM}^C(\mathscr{C};S) \rightarrowtriangle \mathbf{CM}^D(\mathscr{D};R)$ 
defined as follows: the class function $\gamma \colon \mathscr{C}_0 \to \mathscr{D}_0$ is defined by $\gamma := F_0$, and for every $c \in \mathscr{C}_0$ we define $\Vdash_c^\gamma \ \subseteq R(F(c)) \times S(c)$ by the rule
	\[ y \Vdash_c^\gamma x :\Leftrightarrow y = x  \]
\[ \begin{tikzcd}
&\mathscr{C} \ar[dd,"F"{name = U}] \ar[dl,"S"'] &\mathbf{CM}^C(\C{C};S) \ar[dd,"\pmb{\gamma}"' 
{name = V},-simto,"="{sloped}]
\\
\Sets 
\\
&\mathscr{D} \ar[ul,"R"] &\mathbf{CM}^D(\C{D};R).
\arrow[from = U, to = V, mapsto]
\arrow[from = U, to = \tikzcdmatrixname-2-1, "\circlearrowright" description, phantom]
\end{tikzcd} \]
Using the definitions in~\cite{petrakisStrictComputabilityModels2022},  $\pmb{\gamma}$ is an equality simulation, although, not always a full one. If we fix a category with base of computability $(\C{C}, C)$ and
$S,S' \colon \C{C} \to \Sets$ are pullback-preserving presheaves on $\C{C}$, then a natural transformation 
$\mu \colon S \Rightarrow S'$ induces a simulation $\pmb{\gamma}^\eta = (\id_{\C{C}_0}, 
(\Vdash_c^{\gamma^\mu})_{c \in \C{C}_0}) \colon \mathbf{CM}^B(\C{C};S) \rightarrowtriangle 
\mathbf{CM}^B(\C{C};S')$,
\[ \begin{tikzcd}
\mathscr{C} \ar[dd,"S"'{name = U},bend right = 40] \ar[dd,"S'"{name = V}, bend left = 40] 
&\mathbf{CM}^B(\C{C};S) \ar[dd,"\pmb{\gamma}^\mu"'{name = W},-simto,"\ast"{sloped}]
\\
\phantom{0}
\\
 \Sets &\mathbf{CM}^B(\C{C};S').
\arrow[from = U, to = V, Rightarrow,shorten <= 2pt, shorten >= 2pt, "\mu"{name = X}]
\arrow[from = X, to = W,mapsto, bend left = 25,crossing over]
\end{tikzcd} \]
where $\Vdash_c^{\gamma^\mu} \subseteq S'(c) \times S(c)$
 is defined through the equivalence
\[ y \Vdash_c^{\gamma^\mu} x :\Leftrightarrow y = \mu_{S(c)}(x), \]
and ``$\ast$'' above denotes that $\pmb{\gamma}^\mu$ is a
natural simulation, a notion introduced in~\cite{petrakisStrictComputabilityModels2022}. Notice that 
not every natural simulation between canonical computability models 
$\CM^B(\C{C};S)$ and $\CM^{B}(\C{C};S')$ is induced by some natural transformation (see~\cite{Ga26}). 

The connection established in sections~\ref{sec: typecat},~\ref{sec: 2famS}, and~\ref{sec: 2depS} between the category $\CatBaseComp$ and Dependent Type Theory has the following natural extension. All (dependent) categorical notions introduced in these sections have their immediate dual (codependent) counterpart (see~\cite{petrakisCategoriesDependentArrows2023}). The \emph{cofamily arrows} on an object $c \in \C{C}$ are (codependent) objects that are composed with arrows $f \colon c \to d$ in $\C{C}$, satisfying the dual conditions of $(F_1, F_2)$ in Definition~\ref{def: famCatDef}. The dual of a 
$Sigma$-object in a fam-category is that of a \emph{co}$Sigma$-object, or a \emph{quotient-oblject} in a cofam-category\footnote{The duality between (fam, $\Sigma)$-categories and (cofam, co$\Sigma)$-categories corresponds to the duality between comprehension and quotient structures
in~\cite{MR20}.}. Projection-one-arrows are dualised by \emph{injection-one-arrows}, and the fundamental pullback in Definition~\ref{def: famSigma} is dualised by the corresponding pushout. \emph{Codependent arrows} are the dual of the dependent arrows, and the \emph{second-injection-arrow} is the dual of the second-projection-arrow in Definition~\ref{def: depSigma}. All examples and results about 
fam- or dep-categories are dualised immediately for cofam- or codep-categories (the same holds for $2$-fam- and $2$-dep-categories). The formulation of the interplay between the dependent-arrow-structure and the codependent-arrow-structure on a category is a major future task.
In subsequent work we plan to investigate possible canonical cofam- and codep-structures on $\CatBaseComp$, and their relation to the fam- and dep-structures on $\CatBaseComp$ that were introduced here.

\printbibliography

\end{document}